\author{Marcelo Fiore}
\thanks{
This paper gives a new 
development 
of results announced in~\cite{FioreHur2008} and elaborated upon
in~\cite{Hur2010}.
I am grateful to Chung-Kil Hur for our collaboration on the subject matter
of this work.}
\address{University of Cambridge,\\ Computer Laboratory,\\ 
  15 JJ Thomson Avenue,\\ Cambridge CB3 0FD,\\ UK}
\title{An Equational Metalogic\\
 for Monadic Equational Systems}
\keywords{Monoidal action; strong monad; clones; double dualization;
  equational presentation; free algebra; equational logic; soundness; (strong)
  completeness} 
\newcommand{\cat}[1]{\mathscr{#1}}
\newcommand{\lscat}{\cat}
\newcommand{\monten}{\cdot}
\newcommand{\mh}[2]{\mbox{$#1$-#2}}
\newcommand{\alphaact}{\ul{\alpha}}
\newcommand{\lambdaact}{\ul{\lambda}}
\newcommand{\wt}[1]{\widetilde{#1}}
\newcommand{\enrich}[1]{\ul{#1}}
\newcommand{\ul}[1]{\underline{#1}}
\newcommand{\monact}{\ast}
\newcommand{\monad}[1]{\mathbb{#1}}
\newcommand{\ie}{\textit{i.e.}}
\newcommand{\eg}{\textit{e.g.}}
\newcommand{\cf}{\textit{cf.}}
\newcommand{\tensor}{\otimes}
\newcommand{\St}{\boldsymbol{St}}
\newcommand{\coend}{\int}
\newcommand{\clone}[1]{\langle #1 \rangle}
\newcommand{\adj}{\dashv}
\newcommand{\clsymbol}{C}
\newcommand{\clmon}[1]{\monad\clsymbol_{#1}}
\newcommand{\clfun}[1]{\mathrm{\clsymbol}_{#1}}
\newcommand{\cleta}[1]{\eta^{\clmon{#1}}}
\newcommand{\clmu}[1]{\mu^{\clmon{#1}}}
\newcommand{\st}{\varphi}
\newcommand{\eqnz}[2]{#1 \equiv #2}
\newcommand{\iso}{\cong}
\newcommand{\pair}[1]{\langle #1 \rangle}
\newcommand{\inj}[1]{\iota_{#1}}
\newcommand{\op}{\mathrm{op}}
\newcommand{\name}[1]{\wh{#1}}
\newcommand{\cleval}[2]{\nu^{#1}_{#2}}
\newcommand{\eval}[2]{\varepsilon^{#1}_{#2}}
\newcommand{\eneval}[2]{\ul{\varepsilon}^{#1}_{#2}}
\newcommand{\geneval}[2]{\epsilon^{#1}_{#2}}
\newcommand{\angeneval}[2]{\epsilon'^{#1}_{#2}}
\newcommand{\DDmon}[1]{\monad\DDfunsymbol_{#1}}
\newcommand{\DDeta}[1]{\eta^{\DDmon{#1}}}
\newcommand{\DDmu}[1]{\mu^{\DDmon{#1}}}
\newcommand{\DDmonad}{\DDmon}
\newcommand{\DDfunsymbol}{K}
\newcommand{\DDfun}[1]{\mathrm{\DDfunsymbol}_{#1}}
\newcommand{\DDst}[1]{\kappa^{_{#1}}}
\newcommand{\clst}[1]{\gamma^{_{#1}}}
\newcommand{\DDneg}{\delta}
\newcommand{\stmonmor}{\sigma}
\newcommand{\algstr}{\varsigma}
\newcommand{\intmap}{\iota}
\newcommand{\acthom}[1]{[#1]}
\newcommand{\homact}{\acthom}
\newcommand{\acthomalgstr}[2]{#2_{#1}}
\newcommand{\llrrbrk}[1]{{\llbracket{#1}\rrbracket}}
\newcommand{\Set}{{\boldsymbol{\mathcal S\hspace{-.45mm}\mathit{et}}}}
\newcommand{\Bij}{\mathbb{B}}
\newcommand{\Nat}{\mathbb{N}}
\newcommand{\Op}{{\boldsymbol{\mathcal O\hspace{-.45mm}\mathit p}}}
\newcommand{\Seq}{{\boldsymbol{\mathcal S\hspace{-.45mm}\mathit{eq}}}}
\newcommand{\s}[1]{\mathsf{#1}}
\newcommand{\cotensor}%
{\mathrel{\xy\ar@{}(0,0)|-{\textstyle\cap}\ar@{}(0,0)|-{\textstyle|}\endxy}}
\newcommand{\setof}[1]{\{\, #1 \,\}}
\newcommand{\setofz}[1]{\{ #1 \}}
\newcommand{\catalg}[1]{{#1\text{-}\mathbf{Alg}}}
\newcommand{\wh}[1]{\widehat{#1}}
\newcommand{\subst}[1]{\{ #1 \}}
\newcommand{\eqn}[3]{#1 \vdash #2 \equiv #3}
\newcommand{\ol}[1]{\overline{#1}}
\newcommand{\comp}{\circ}
\newcommand{\id}{\mathrm{id}}
\newcommand{\Id}{\mathrm{Id}}
\newcommand{\catAlg}[2]{{{#1}^{#2}}}
\newcommand{\Kl}[2]{{{#1}_{#2}}}
\newcommand{\Klfun}[1]{{#1}^\star}
\newcommand{\Klst}[1]{\st_{#1}}
\newcommand{\mes}[1]{\mathcal #1}
\newcommand{\Ax}{E}
\newcommand{\myproof}[1]{\mbox{$ #1 \DisplayProof $}}
\newcommand{\tensorext}[2]{\langle{#1}\rangle{#2}}
\newcommand{\arrowlength}{4.25}
\newcommand{\epirightarrow}{\mathrel{\xy\ar@{->>}(\arrowlength,0)\endxy}}
\newcommand{\rightembedding}{\mathrel{\xy\ar@{^(->}(\arrowlength,0)\endxy}}
\newcommand{\qt}{\mathsf{q}}
\newcommand{\epileftarrow}{\mathrel{\xy\ar@{<<-}(\arrowlength,0)\endxy}}
\begin{document}

\maketitle
\begin{abstract}
The paper presents algebraic and logical developments.  
From the algebraic viewpoint, we introduce Monadic Equational Systems as an
abstract enriched notion of equational presentation. 
From the logical viewpoint, we provide Equational Metalogic as a general
formal deductive system for the derivability of equational consequences.
Relating the two, a canonical model theory for Monadic Equational Systems is
given and for it the soundness of Equational Metalogic is established.
This development involves a study of clone and double-dualization
structures.
We also show that 
in the presence of free 
algebras 
the model theory of Monadic Equational Systems satisfies an internal
strong-completeness property.  
\end{abstract}

\tableofcontents

\section{Introduction}
\label{Introduction}

\subsection*{Background}

The modern understanding of equationally defined algebraic
structure,~\ie\ universal algebra, considers the subject as a trinity from the
interrelated viewpoints of: (I)~equational presentations and their varieties;
(II)~algebraic theories and their models; and (III)~monads and their algebras.

The subject was first considered from the viewpoint~(I)
by~\cite{Birkhoff1935}.  
There 
the notion of abstract algebra was introduced and two fundamental results were
proved.  The first one, so-called variety (or HSP) theorem, falls within
the tradition 
of universal algebra and characterises the classes of
equationally defined algebras,~\ie~algebraic categories.  The second one, the
soundness and completeness of equational reasoning, falls within the 
tradition 
of logic and establishes the correspondence between the semantic notion of
validity in all models and the syntactic notion of derivability in a formal
system of inference rules.  

The viewpoints
~(II) and~(III) only became available with the advent of category theory.
Concerning~(II),
~\cite{Lawvere1963} shifted attention from equational presentations to their
invariants in the form of algebraic theories, the categorical counterparts of
the abstract clones of P.\,Hall in universal
algebra~(see~\eg~\cite[Chapter~III, 
page~132]{Cohn1965}).  This opened up a new spectrum of possibilities.  
In particular, 
the notion of algebraic category got extended to that of algebraic functor,
and these were put in correspondence with the concept of map~(or translation)
between algebraic theories.  Furthermore, the 
central 
result that algebraic functors have left adjoints pave the way for the monadic
viewpoint~(III).  In this respect, fundamental results of Linton and of Beck,
see~\eg~\cite[Section~6]{Linton1966}
and~\cite[Section~4]{HylandPower2007}, established the equivalence between
bounded infinitary algebraic theories 
and their set-theoretic models with accessible monads 
on sets and their algebras.
Incidentally, the notion of (co)monad had arisen earlier, in the late 1950s,
in the different algebraic contexts of homological algebra and algebraic
topology (see \eg~\cite[Chapter~VI Notes]{MacLaneCWM}).

\subsection*{Developments}

Since the afore-mentioned original seminal works much has been advanced.  
Specifically, 
the mathematical theories of algebraic theories and monads have been
consolidated and vastly generalised.
Such developments include extensions to categories with structure, to enriched
category theory, and to further notions of algebraic structure.
See, for instance, the developments of~\cite{MacLane1965},
\cite{Ehresmann1968}, \cite{Burroni1971}, \cite{Kelly1972},
\cite{BorceuxDay1980}, \cite{KellyPower1993},\linebreak \cite{Power1999},
\cite{LackPower2009}, \cite{LackRosicky2011}
and the recent accounts in~{\small\cite{AdamekRosicky1994}, \cite{Robinson2002},
  \cite{MacDonaldSobral2004},\linebreak \cite{PedicchioRovatti2004},
  \cite{HylandPower2007}, \cite{AdamekRosickyVitale2010}}.  

By comparison, however, the logical aspect of algebraic theories provided by
equational deduction has been paid less attention to, especially from the
categorical perspective.  An exception 
is the work of~\cite{Rosu2001,
AdamekHebertSousa2007,AdamekSobralSousa2009}.  In these, equational
presentations are abstracted as sets of maps (which in the example of
universal algebra correspond to quotients of free algebras identifying
pairs of terms) and sound and complete deduction systems for the
derivability of morphisms that are injective consequences (which in the
example of universal algebra amount to equational implications) are
considered.

\subsection*{Contribution}

The aim of this work is to contribute to the logical theory of
equationally defined algebraic structure.  Our approach in this 
direction~\cite{FioreHur2008,Hur2010,FioreHur2011} is novel in that it
combines various aspects of the trinity~\mbox{(I--III)}.  

In the first instance, we rely on the concept of 
monad as an abstract notion for describing algebraic structure.  On this
basis, we introduce a general notion of equational presentation, referred
to here as Monadic Equational System~(MES).  This is roughly given by sets
of equations in the form of parallel pairs of Kleisli maps for the monad.
The role played by Kleisli maps here is that of a categorical form of
syntactic term, very much as the role played by the Kleisli category when
distilling a Lawvere theory out of a finitary monad.  

It is of crucial importance, both for applications and theory, that the
categorical development is done in the enriched setting.  In this paper,
as in~\cite{Hur2010} and unlike in the extended
abstract~\cite{FioreHur2008}, we do so from the technically more
elementary and at the same time more general perspective of monoidal
actions, \ie~categories~$\cat C$ equipped with an action~$\monact:\cat
V\times \cat C\to\cat C$ for a monoidal category~$\cat V$, see
Section~\ref{sec:mon-act}.  
In applications, the enrichment is needed, for instance, when moving from mono
to multi sorted algebra, see~\cite[Part~I]{Fiore2008} and~\cite{FioreHur2008}.  
As for the theoretical development, in Section~\ref{sec:MES}, a MES is
then defined to consist of a strong monad~$\monad T$ on a biclosed
action~$(\cat C,{\monact:\cat V\times\cat C\to\cat C})$ for a monoidal
category~$\cat V$ together with a set of equations~$\setof{ \eqnz {u_e}
  {v_e}:C_e\to TA_e }_{e\in E}$ for the endofunctor~$T$ underlying the
monad~$\monad T$ (see Definition~\ref{MonadicEquationalSystem}).  Here the
biclosed structure amounts to right adjoints~$(-)\monact C\dashv
\enrich{\cat C}(C,-):\cat C\to\cat V$ and
$V\monact(-)\dashv\acthom{V,-}:\cat C\to\cat C$ for all $C\in\cat C$ and
$V\in\cat V$.

In Section~\ref{StrongDDmonad}, generalising seminal work
of~\cite{Kock1970a} (see also~\cite{Kock2012}), we show that the biclosed
structure of the monoidal action provides a double-dualization strong
monad~$\DDmon X$ for every $X\in\cat C$, with underlying
endofunctor~$\DDfun X = \acthom{\enrich{\cat C}(-,X),X}$, establishing a
bijective correspondence between \mh{\monad T}algebra structures ${s:TX\to
X}$ and strong monad morphisms~$\stmonmor(s):\monad T\to\monad K_X$.
It follows that Kleisli maps $t:C\rightarrow TA$ have a canonical internal
semantic interpretation in Eilenberg-Moore algebras~$(X,s)$ as morphisms
${\stmonmor(s)_A\comp t:C\to\acthom{\enrich{\cat C}(A,X),X}}$
(\cf~Definition~\ref{KleisliMapInterpretationDefinition} and
Remark~\ref{SemanticsTransformationRemark}), in the same way that in
universal algebra syntactic terms admit algebraic interpretations.  
One thus obtains a canonical notion of satisfaction between algebras and
equations, whereby an equation~${\eqnz uv:C\to TA}$ is satisfied in an
algebra~$(X,s)$ iff its semantic interpretation is an identity, that is
$\stmonmor(s)_A\comp u = \stmonmor(s)_A\comp v : C\to\acthom{\enrich{\cat
C}(A,X),X}$ (see Definition~\ref{SatisfactionRelationDefinition}). 

In Section~\ref{EquationalMetaLogic}, the model theory of MESs is put to
use from the logical perspective, and we introduce a deductive system,
referred to here as Equational Metalogic~(EML), for the formal reasoning
about equations in MESs.
The core of EML are three inference rules---two of congruence and one of
local-character---that embody algebraic properties of the semantic
interpretation.  Hence, EML is sound by design.

In the direction of completeness,
Section~\ref{InternalCompletenessSection} establishes a
strong-completeness result (Theorem~\ref{thm:int-comp}) to the effect
that an equation is satisfied by all models iff it is satisfied by a
freely generated one.  This requires the availability of free
constructions, a framework for which is outlined in
Sections~\ref{FreeStrongMonadsSection} and~\ref{FreeAlgebrasForMESs}.

Strong completeness is the paradigmatic approach to completeness proofs,
and we have in fact already used it to 
this purpose. 
Indeed, the categorical theory of the paper has been shaped not only by
reworking the traditional example of universal algebra in
it~\cite[Part~II]{FioreHur2011} but also by developing two novel
applications.  
Specifically, the companion paper~\cite[Part~II]{FioreHur2011} considers
the framework in the topos of nominal sets~\cite{GabbayPitts2001}, which
is equivalent to the Schanuel
topos~(see~\eg~\cite[page~155]{MacLaneMoerdijk1992}), and studies nominal
algebraic theories providing a sound and complete nominal equational logic
for reasoning about algebraic structure with name-binding operators. 
Furthermore, the companion paper~\cite{FioreHur2010} considers the
framework in the object classifier topos, introducing a conservative
extension of universal algebra from first to second order,~\ie~to
languages with variable binding and parameterised metavariables, and
thereby synthesising a sound and complete second-order equational logic.
Second-order algebraic theories are the subject
of~\cite{FioreMahmoud2010}.

\section{Strong monads}
\label{sec:mon-act}

We briefly review the notion of strong monad (and their morphisms) for an
action of a monoidal category on a 
category (see \eg~\cite{Kock1970b,Kock1972,Pareigis1977}), and recall its
relationship to the notion of enriched monad on an enriched
category (see \eg~\cite{JanelidzeKelly01}).

\subsection*{Monoidal actions}

A \emph{\mh{\cat V}action}~$\cat C = (\cat C, \monact, \alphaact,
\lambdaact)$ for a monoidal category $\cat V=(\cat V, I, \monten,
\alpha,\linebreak
\lambda, \rho)$ consists of a category~$\cat C$, a functor~$\monact: \cat
V \times \cat C \rightarrow \cat C$ and natural
isomorphisms~$\lambdaact_C: I \monact C \stackrel\iso\longrightarrow C$
and $\alphaact_{U,V,C}:(U\monten V)\monact C \stackrel\iso\longrightarrow
U\monact(V\monact C)$ subject to the following coherence conditions:
$$
\xymatrix@C=3pc{
  (I\monten V)\monact C
  \ar[rd]_-{\lambda_V\monact C}
  \ar[r]^-{\alphaact_{I,V,C}}
  &
  I\monact (V\monact C)
  \ar[d]^-{\lambdaact_{V\monact C}}
  \\
  &
  V\monact C
}
\qquad
\xymatrix@C=3pc{
  (V\monten I)\monact C
  \ar[rd]_-{\rho_V\monact C}
  \ar[r]^-{\alphaact_{V,I,C}}
  &
  V\monact (I\monact C)
  \ar[d]^-{V\monact \lambdaact_{C}}
  \\
  &
  V\monact C
}
$$
$$
\xymatrix@C=50pt{
  ((U\monten V)\monten W)\monact C
  \ar[r]^-{\alpha_{U,V,W}\monact C}
  \ar[d]^-{\alphaact_{U\monten V,W,C}}
  &
  (U\monten (V\monten W))\monact C
  \ar[r]^-{\alphaact_{U,V\monten W, C}}
  &
  U\monact ((V\monten W) \monact C)
  \ar[d]^-{U\monact \alphaact_{V,W,C}}
  \\
  (U\monten V)\monact (W \monact C)
  \ar[rr]_-{\alphaact_{U,V,W\monact C}}
  &
  &
  U\monact (V\monact (W \monact C))
}
$$

Such an action is said to be \emph{right closed} if for all $C\in\cat C$
the functor ${(-)\monact C: \cat V \rightarrow \cat C}$ has a right
adjoint ${\enrich{\cat C}(C,-):\cat C \rightarrow \cat V}$ referred to as
a \emph{right-hom}.
The 
action is said to be \emph{left closed} if for all $V\in\cat V$ the
functor ${V\monact (-): \cat C \rightarrow \cat C}$ has a right
adjoint~${\acthom{V,-}:\cat C \rightarrow \cat C}$ referred to as a
\emph{left-hom}.
When an action is both right and left closed, it is said to be
\emph{biclosed}.

\begin{examples}
We will be mainly interested in biclosed actions, examples of which follow.
\begin{enumerate}
\item \label{item:MES-universe-copower}
Every category~$\cat C$ with small coproducts and products gives rise to a
biclosed \mh{\Set}action $(\cat C,\cdot)$, for $\Set$ equipped with the
cartesian structure, where the actions~$V\cdot C$,
right-homs~$\enrich{\cat C}(C,D)$, and left-homs~$\acthom{V,C}$ are
respectively given by the coproducts~$\coprod_{v\in V} C$, the
hom-sets~$\cat{C}(C,D)$, and the products~$\prod_{v\in V} C$.

\item \label{item:MES-universe-monoidal}
Every monoidal biclosed category~$(\cat C,I,\tensor)$ induces the
biclosed $\cat C$-action $(\cat C,\tensor)$ with right-homs and left-homs
respectively given by the right and left closed structures.

\item \label{item:MES-universe-enriched}
For $\cat V$ monoidal closed, every \mh{\cat V}category~$\cat K$ with
tensor~$\tensor$ and cotensor~$\cotensor$ gives rise to the biclosed
$\cat V$-action~$(\cat{K}_0,\tensor_0)$ for $\cat{K}_0$ and $\tensor_0$
respectively the underlying ordinary category and functor of $\cat K$ and
$\tensor$, where the right-homs~$\enrich{\cat{K}_0}(X,Y)$ and
left-homs~$\acthom{V,X}$ are respectively given by the
hom-objects~$\cat{K}(X,Y)$ and the cotensors~$V\cotensor X$.

\item \label{item:MES-universe-product}
From a family of biclosed $\cat V$-actions~${\setof{({\cat
C}_i,\monact_i)}_{i\in I}}$ for a small set~$I$, when $\cat V$ has \mh I
indexed products, we obtain the product biclosed $\cat
V$-action~$\prod_{i\in I} ({\cat C}_i,\monact_i) = (\cat C,\monact)$,
where the category~$\cat C$ is given by the product category~$\prod_{i\in
I}{\cat C}_i$ and where the actions~$V\monact \setofz{C_i}_{i\in I}$,
right-homs~$\enrich{\cat C}\big(\setofz{C_i}_{i\in I},\setofz{D_i}_{i\in
I}\big)$, and left-homs~$\acthom{V,\setofz{C_i}_{i\in I}}$ are
respectively given pointwise by $\setofz{V\monact_i C_i}_{i\in I}$,
$\prod_{i\in I} \enrich{\cat{C}_i}(C_i,D_i)$, and
$\setofz{\acthom{V,C_i}_i}_{i\in I}$.
\end{enumerate}
\end{examples}

\subsection*{Strong functors}

A \emph{strong functor}~$(F,\st): (\cat
C,\monact,\alphaact,\lambdaact)\rightarrow (\cat
C',\monact',\alphaact',\lambdaact')$ between \mh{\cat V}actions consists
of a functor~$F:\cat C \rightarrow \cat C'$ and a \emph{strength}~$\st$
for $F$, \ie~a natural transformation $\st_{V,C}:V\monact' FC
  \rightarrow F(V\monact C): \cat V \times \cat C \rightarrow \cat C$
subject to the following coherence conditions:
$$
\xymatrix@C=3pc{
  I\monact' FC
  \ar[r]^-{\st_{I,C}}
  \ar[rd]_-{\lambdaact'_{FC}}
  &
  F(I\monact C)
  \ar[d]^-{F(\lambdaact_C)}
  \\
  &
  FC
}
\ \ \ 
\xymatrix@C=3pc{
  (U\monten V)\monact' FC
  \ar[r]^-{\alphaact'_{U,V,FC} }
  \ar[d]^-{\st_{U\monten V,C}}
  &
  U\monact' (V \monact' FC)
  \ar[r]^-{U\monact' \st_{V,C}}
  &
  U\monact' F(V\monact C)
  \ar[d]^-{\st_{U,V\monact C} }
  \\
  F((U\monten V)\monact C)
  \ar[rr]_-{F(\alphaact_{U,V,C})}
  &
  &
  F(U\monact (V\monact C))
}
$$

A \emph{strong functor morphism} $\tau:(F,\st)\rightarrow(F',\st')$
between strong functors is a natural transformation $\tau:F\rightarrow F'$
satisfying the coherence condition
$$\xymatrix@C=40pt{
V\monact FC 
\ar[r]^-{V\monact \tau_C}
\ar[d]_-{\st_{V,C}} & 
V\monact F'C 
\ar[d]^-{\st_{V,C}} 
\\
F(V\monact C) 
\ar[r]_-{\tau_{V\monact C}}
& F'(V\monact C)
}$$

\subsection*{Strong monads}

A \emph{strong monad}~$\monad T=(T,\st,\eta,\mu)$ on a \mbox{$\cat
V$-action}~$(\cat C,\monact)$ consists of a strong endofunctor~$(T,\st)$ and a
monad~$(T,\eta,\mu)$ both on $\cat C$ for which the unit~$\eta$ and the
multiplication~$\mu$ are strong functor morphisms $(\Id_{\cat
C},\setof{\id_{V\monact X}}_{V\in\cat V,X\in\cat C})\rightarrow(T,\st)$ and
$(TT,T\st\comp\st T)\rightarrow(T,\st)$; \ie~they satisfy the coherence
conditions below:
$$
\xymatrix@C=3pc{
  V\monact TC
  \ar[r]^-{\st_{V,C}}
  &
  T(V\monact C)
  \\
  V\monact C
  \ar[u]^-{V\monact \eta_C}
  \ar[ru]_-{\eta_{V\monact C}}
}
\qquad
\xymatrix@C=3pc{
  V\monact TTC
  \ar[r]^-{\st_{V,TC}}
  \ar[d]^-{V\monact \mu_C}
  &
  T(V\monact TC)
  \ar[r]^-{T(\st_{V,C})}
  &
  TT(V\monact C)
  \ar[d]^-{\mu_{V\monact C} }
  \\
  V\monact TC
  \ar[rr]_-{\st_{V,C}}
  &
  &
  T(V\monact C)
}
$$

\begin{proposition}
For every strong monad~$\monad T$ on a \mh{\cat V}action $(\cat
C,\monact)$, the \mh{\cat V}action structure on $\cat C$ lifts to a
\mh{\cat V}action structure on the Kleisli category $\Kl{\cat C}{\monad
T}$ making the canonical adjunction 
$\xymatrix{
\cat C
  \ar@<2mm>[r]
  \ar@{}[r]|-{\bot}&\ar@<2mm>[l]
  \Kl{\cat C}{\monad T}
}$ 
into an adjunction of strong functors.
\end{proposition}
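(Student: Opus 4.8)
The plan is to transport the action along the identity-on-objects left adjoint $F\colon\cat C\to\Kl{\cat C}{\monad T}$, using the strength of $\monad T$ to define the action on Kleisli maps; conceptually this is the familiar fact that a strength is exactly a family of liftings of the functors $V\monact(-)$ to $\Kl{\cat C}{\monad T}$ commuting with $F$. Write $g\bullet f$ for Kleisli composition and recall that $F$ sends $f\colon C\to D$ to $\eta_D\comp f$, while $f\colon C\to TD$ regarded as a Kleisli map is composed by $g\bullet f=\mu_E\comp Tg\comp f$. On objects I keep $V\monact C$ unchanged; on a Kleisli map $f\colon C\to TD$ I set $V\monact f$ to be the Kleisli map $\st_{V,D}\comp(V\monact f)\colon V\monact C\to T(V\monact D)$. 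Functoriality then reduces exactly to the two strong-monad coherence conditions: preservation of identities is the unit coherence $\st_{V,C}\comp(V\monact\eta_C)=\eta_{V\monact C}$; preservation of composition follows by first commuting $T(V\monact g)$ past $\st_{V,D}$ using naturality of the strength in its second argument, and then collapsing the resulting $\mu\comp T\st\comp\st$ into $\st\comp(V\monact\mu)$ by the multiplication coherence.

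For the structural isomorphisms I take the images $F(\lambdaact_C)$ and $F(\alphaact_{U,V,C})$; as $F$ is a functor these are invertible in $\Kl{\cat C}{\monad T}$, and the triangle and pentagon conditions follow by applying $F$ to the corresponding identities in $\cat C$. The delicate point is naturality: since a general Kleisli map is not of the form $F(h)$, naturality of these isomorphisms cannot simply be transported along $F$ and must be checked directly. Here I expect to need the coherence of $(T,\st)$ as a strong \emph{functor}---for instance $T(\lambdaact_C)\comp\st_{I,C}=\lambdaact_{TC}$---together with the monad laws $\mu\comp T\eta=\mu\comp\eta T=\id$ and the naturality of $\lambdaact,\alphaact,\eta,\mu$ in $\cat C$. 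This is the step I expect to be the main obstacle: not because it is deep, but because it is exactly where one must invoke the full strength structure rather than merely the monad, and the two are easily conflated.

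It remains to upgrade the adjunction. I exhibit $F$ as a strong functor with identity strength; the one condition to verify, $F(V\monact f)=V\monact F(f)$, is once more the unit coherence, so $F$ is in fact a \emph{strict} morphism of actions. The right adjoint $R$, with $RC=TC$ and $R(f\colon C\to TD)=\mu_D\comp Tf$, I equip with the strength $\st_{V,C}\colon V\monact RC\to R(V\monact C)$; its naturality over Kleisli maps and its own triangle and pentagon coherences reduce, as before, to the multiplication coherence and to the strong-functor coherence of $T$, using $\mu\comp T\eta=\id$ to evaluate $R$ on the structural isomorphisms. Finally, that $F\dashv R$ is an adjunction in the $2$-category of \mh{\cat V}actions, strong functors and strong functor morphisms amounts to checking that its unit $\eta\colon\Id_{\cat C}\to RF=T$ and its counit $\epsilon_C=\id_{TC}$ are strong functor morphisms; the former is precisely the unit coherence and the latter reduces to the multiplication coherence. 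The triangle identities then hold automatically, since they already hold for the underlying Kleisli adjunction.
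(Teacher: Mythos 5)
Your construction agrees exactly with the one the paper records (the lifted action $h\monact_{\monad T}f=\st_{V',A'}\comp(h\monact f)$, structural isomorphisms transported along the identity-on-objects left adjoint, and the strength $\st$ itself on the right adjoint), and the paper leaves all verifications to the reader, which you carry out correctly via the unit/multiplication coherences and the strong-functor coherences of $(T,\st)$. This is essentially the same approach as the paper's, with the details filled in soundly.
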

The action functor $\monact_{\monad T}:\cat V\times \Kl{\cat C}{\monad
T}\rightarrow \Kl{\cat C}{\monad T}$ is given, for $h:V\rightarrow V'$ in
$\cat V$ and $f:A\rightarrow TA'$ in $\cat C$, by $h\monact_{\monad T}f =
\st_{V',A'}\comp(h\monact f):V\monact A\rightarrow T(V'\monact A')$ in
$\cat C$.  

\bigskip
A \emph{strong monad morphism}
$\tau:(T,\st,\eta,\mu)\rightarrow(T',\st',\eta',\mu')$ between strong
monads on a monoidal action $\cat C$ is a natural transformation
$\tau:T\rightarrow T'$ that is both a strong functor morphism
$\tau:(T,\st)\rightarrow(T',\st')$ and a monad morphism 
$\tau:(T,\eta,\mu)\rightarrow(T',\eta',\mu')$, in that the further
coherence conditions hold:
$$
\xymatrix@C=10pt{
 & \ar[dl]_-{\eta_C} C\ar[dr]^-{\eta'_C} &
 \\
 TC \ar[rr]_-{\tau_C} & & T'C
}
\qquad\qquad\qquad
\xymatrix@C=35pt{
  TTC \ar[r]^-{(\tau\tau)_C} \ar[d]_-{\mu_C} & T'T'C \ar[d]^-{\mu'_C}
  \\
  TC \ar[r]_-{\tau_C} & T'C
}
$$

\begin{proposition}\label{StMonMorToKleisli}
Every morphism $\tau:\monad T\rightarrow\monad T'$ of strong monads on a
\mh{\cat V}action $(\cat C,\monact)$ induces a strong functor
$(\Klfun\tau,\Klst\tau):(\Kl{\cat C}{\monad T},\monact_{\monad
T})\rightarrow(\Kl{\cat C}{\monad T'},\monact_{\monad T'})$ of \mh{\cat
V}actions.
\end{proposition}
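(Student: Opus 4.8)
I need to prove Proposition~\ref{StMonMorToKleisli}: given a strong monad morphism $\tau:\monad T\to\monad T'$, construct a strong functor between the Kleisli categories with their lifted actions.

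Let me recall the relevant structures:
- Kleisli category $\Kl{\cat C}{\monad T}$ has same objects as $\cat C$, morphisms $A \to B$ are $\cat C$-morphisms $A \to TB$
- $\monact_{\monad T}$ on the Kleisli category is defined by $h\monact_{\monad T}f = \st_{V',A'}\comp(h\monact f)$ for $h:V\to V'$ and $f:A\to TA'$
- A strong monad morphism $\tau:\monad T\to\monad T'$ is a natural transformation satisfying strong functor morphism condition plus monad morphism conditions

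**The functor $\Klfun\tau$.** For strong monad morphisms there's a standard construction of $\Klfun\tau$. The Kleisli functor on objects should be identity. On a morphism $f: A \to B$ in the Kleisli category (i.e., $f: A \to TB$ in $\cat C$), we set $\Klfun\tau(f) = \tau_B \circ f : A \to T'B$.

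Let me verify this is a functor:
- Identity in Kleisli of $\monad T$ is $\eta_A: A \to TA$. Then $\Klfun\tau(\eta_A) = \tau_A \circ \eta_A = \eta'_A$ (by unit condition). ✓
- Composition in Kleisli of $\monad T$: $g \circ_{\monad T} f = \mu_C \circ Tg \circ f$ where $f:A\to TB$, $g:B\to TC$.

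Need: $\Klfun\tau(g \circ_{\monad T} f) = \Klfun\tau(g) \circ_{\monad T'} \Klfun\tau(f)$.

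LHS $= \tau_C \circ \mu_C \circ Tg \circ f$.
RHS $= \mu'_C \circ T'(\tau_C \circ g) \circ \tau_B \circ f = \mu'_C \circ T'\tau_C \circ T'g \circ \tau_B \circ f$.

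By naturality of $\tau$: $T'g \circ \tau_B = \tau_{TC} \circ Tg$. So RHS $= \mu'_C \circ T'\tau_C \circ \tau_{TC} \circ Tg \circ f = \mu'_C \circ (\tau\tau)_C \circ Tg \circ f$.

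By monad morphism condition: $\mu'_C \circ (\tau\tau)_C = \tau_C \circ \mu_C$. So RHS $= \tau_C \circ \mu_C \circ Tg \circ f =$ LHS. ✓

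**The strength $\Klst\tau$.** Since $\Klfun\tau$ is identity on objects, the strength should be defined using the original structures. I expect $\Klst\tau$ to be built from $\eta'$ and reflect how the two action structures compare.

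The strength $\Klst\tau_{V,A}$ should be a Kleisli morphism $V\monact_{\monad T} A \to V \monact_{\monad T'} A$... wait, let me think about what the strength is for. A strength for $\Klfun\tau$ is a natural transformation with components $\Klst\tau_{V,A}: V\monact_{\monad T'} \Klfun\tau(A) \to \Klfun\tau(V\monact_{\monad T} A)$. Since $\Klfun\tau$ is identity on objects, this is $V\monact_{\monad T'} A \to V\monact_{\monad T} A$ in the Kleisli category of $\monad T'$, i.e., a $\cat C$-morphism $V\monact A \to T'(V\monact A)$.

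I'd guess $\Klst\tau_{V,A} = \eta'_{V\monact A}: V\monact A \to T'(V\monact A)$ — the identity Kleisli morphism.

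**Proof plan:**

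\begin{proof}
We define the functor $\Klfun\tau: \Kl{\cat C}{\monad T}\to\Kl{\cat C}{\monad T'}$ to be the identity on objects and to send a Kleisli morphism $f:A\to TA'$ to $\tau_{A'}\comp f:A\to T'A'$.  Functoriality follows from the strong monad morphism axioms: preservation of identities $\Klfun\tau(\eta_A) = \tau_A\comp\eta_A = \eta'_A$ is the unit condition on $\tau$, while preservation of composition amounts, after unfolding Kleisli composition, to the equation $\tau_C\comp\mu_C\comp Tg\comp f = \mu'_C\comp(\tau\tau)_C\comp Tg\comp f$, which holds by the naturality of $\tau$ together with its multiplication condition.

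For the strength, we take $\Klst\tau_{V,A}$ to be the identity Kleisli morphism $\eta'_{V\monact A}:V\monact A\to T'(V\monact A)$, which defines a natural transformation with the required type $V\monact_{\monad T'}\Klfun\tau(A)\to\Klfun\tau(V\monact_{\monad T} A)$.  Its naturality in $V$ and $A$ reduces, via the explicit formula $h\monact_{\monad T}f = \st_{V',A'}\comp(h\monact f)$ for the lifted action, to the strong functor morphism condition on $\tau$, namely the compatibility of $\tau$ with the strengths $\st$ and $\st'$.

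It remains to verify the two coherence conditions for $(\Klfun\tau,\Klst\tau)$ as a strong functor of \mh{\cat V}actions.  These unfold, using $\lambdaact$ and $\alphaact$ together with the lifted-action formula, into equations in $\cat C$ that follow from the corresponding strength-coherence axioms for $\monad T'$ and the naturality of $\eta'$.  The key point throughout is that, because $\Klfun\tau$ is identity on objects and $\Klst\tau$ is given by the unit $\eta'$, every coherence square in the Kleisli category of $\monad T'$ collapses—after applying Kleisli composition—to a square already known to commute in $\cat C$.

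\textbf{Main obstacle.}  The main technical work lies in the naturality of $\Klst\tau$ and in checking the strong-functor coherence conditions, since both require carefully unwinding the lifted action $\monact_{\monad T}$ (and $\monact_{\monad T'}$) through the formula $h\monact_{\monad T}f = \st_{V',A'}\comp(h\monact f)$.  The computations are routine once one consistently reduces Kleisli-level identities to $\cat C$-level identities, where the strength-compatibility axiom relating $\st$, $\st'$, and $\tau$ does the essential work.
\end{proof}
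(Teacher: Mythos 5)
Your proposal is correct and follows exactly the same route as the paper: $\Klfun\tau$ is identity on objects and post-composition with $\tau$ on Kleisli maps, and the strength $\Klst\tau_{V,A}$ is the identity of $\Kl{\cat C}{\monad T'}$, i.e.\ $\eta'_{V\monact A}$, with naturality reducing to the strong-functor-morphism axiom for $\tau$. The paper's own proof consists of just these definitions, so your verification of functoriality and of the coherence conditions only fills in details the paper leaves implicit.
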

\proof
For $f:A\rightarrow TB$ in $\cat C$, $\Klfun\tau(f) = \tau_B\comp
f:A\rightarrow T'B$ in $\cat C$, and $(\Klst\tau)_{V,C}=\id_{V\monact C}$
in $\cat C_{\monad T'}$.
In particular, the diagram
$$\xymatrix@C35pt{
  \ar[d]_-{\Kl\monact{\monad T}}
  \cat V\times \Kl{\cat C}{\monad T} \ar[r]^-{\cat V\times\Klfun\tau} &
  \cat V \times \Kl{\cat C}{\monad T'} 
  \ar[d]^-{\Kl\monact{\monad T'}}
  \\
  \Kl{\cat C}{\monad T}\ar[r]_-{\Klfun\tau} & \Kl{\cat C}{\monad T'}
}$$
commutes.
\endproof

\begin{proposition}
Every morphism $\tau:\monad T\rightarrow\monad T'$ of strong monads on a
monoidal action $\cat C$ contravariantly induces a functor $\catAlg{\cat
C}{\monad T'}\rightarrow\catAlg{\cat C}{\monad
T}:(X,s)\mapsto(X,s\comp\tau_X)$ between the categories of Eilenberg-Moore
algebras.
\end{proposition}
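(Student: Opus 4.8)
The plan is to verify the assignment directly: well-definedness on objects (that $s\comp\tau_X$ is a genuine \mh{\monad T}algebra structure) and on morphisms (that \mh{\monad T'}homomorphisms remain \mh{\monad T}homomorphisms), with functoriality then immediate since the action on morphisms leaves the underlying map unchanged. Note that the strength $\st$ plays no role here; only the unit and multiplication conditions of $\tau$ as a monad morphism, together with the naturality of $\tau$, are needed.

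For an object $(X,s)$ of $\catAlg{\cat C}{\monad T'}$ I would first check the two Eilenberg-Moore axioms for the candidate structure $s\comp\tau_X:TX\to X$. The unit law is one line: $s\comp\tau_X\comp\eta_X = s\comp\eta'_X = \id_X$, using the unit triangle of the monad morphism followed by the unit axiom of $(X,s)$.

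For associativity I would chain the relevant squares. Starting from $s\comp\tau_X\comp\mu_X$, the multiplication square of the monad morphism gives $\tau_X\comp\mu_X = \mu'_X\comp(\tau\tau)_X$, and the \mh{\monad T'}algebra associativity $s\comp\mu'_X = s\comp T'(s)$ then reduces the expression to $s\comp T'(s)\comp(\tau\tau)_X$. Writing $(\tau\tau)_X = T'(\tau_X)\comp\tau_{TX}$ and using functoriality of $T'$ to combine $T'(s)\comp T'(\tau_X) = T'(s\comp\tau_X)$, a final appeal to the naturality of $\tau$ at the morphism $s\comp\tau_X$ rewrites $T'(s\comp\tau_X)\comp\tau_{TX}$ as $\tau_X\comp T(s\comp\tau_X)$, yielding $s\comp\tau_X\comp T(s\comp\tau_X)$ as required.

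On morphisms, a \mh{\monad T'}homomorphism $f:(X,s)\to(Y,t)$ satisfies $f\comp s = t\comp T'f$; then $f\comp(s\comp\tau_X) = t\comp T'f\comp\tau_X = t\comp\tau_Y\comp Tf = (t\comp\tau_Y)\comp Tf$, where the middle equality is naturality of $\tau$ at $f$, so $f$ is a \mh{\monad T}homomorphism $(X,s\comp\tau_X)\to(Y,t\comp\tau_Y)$. Since the assignment is the identity on underlying maps, it visibly preserves identities and composition, establishing functoriality. The only step demanding attention is the associativity check, which interleaves the multiplication square with the naturality of $\tau$; every other verification is a single application of one axiom.
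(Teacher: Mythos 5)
Your verification is correct: the unit triangle and multiplication square of the monad morphism, combined with naturality of $\tau$ and the algebra axioms for $(X,s)$, are exactly what is needed, and your observation that the strength plays no role is accurate since the statement concerns only the underlying monad morphism. The paper states this proposition without proof, treating it as the standard contravariant action of monad morphisms on Eilenberg--Moore categories, and your direct computation is precisely the routine argument being omitted.
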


\subsection*{Enrichment}

For a monoidal category~$\cat V$, every right-closed \mh{\cat V}action
induces a \mh{\cat V}category, whose hom-objects are given by the
right-homs.  Furthermore, we have the following correspondences.
\begin{itemize}
\item
  \label{prop:act-enr-fun}
  To give a strong functor between right-closed \mh{\cat V}actions is
  equivalent to give  a \mh{\cat V}functor between the associated
  \mh{\cat V}categories.  

\item
  \label{prop:act-enr-mon}
  To give a strong monad between right-closed \mh{\cat V}actions is
  equivalent to give a \mh{\cat V}monad between the associated \mh{\cat
  V}categories.
\end{itemize}

When $\cat V$ is monoidal \emph{closed}, the notion of right-closed \mh{\cat
V}action essentially amounts to that of tensored \mh{\cat
V}category~(see~{\cite[Section~6]{JanelidzeKelly01}}).  However,
requiring left-closedness for right-closed \mh{\cat V}actions is weaker
than requiring cotensors for the corresponding tensored \mh{\cat
V}categories; as the former requires the action functors~$V\monact(-)$ to
have a right adjoint, whilst the latter further asks that the adjunction
be enriched.  The difference between the two conditions vanishes when
$\cat V$ is \emph{symmetric} monoidal closed.  For example, every monoidal
biclosed category $\cat V$ yields a biclosed \mh{\cat V}action on itself,
but not necessarily a tensored and cotensored \mh{\cat V}category unless
$\cat V$ is symmetric.

\section{Clones and double dualization}
\label{StrongDDmonad}

We consider and study a class of monads that are important in the semantics of
algebraic theories and play a prominent role in the
developments of Sections~\ref{sec:MES}, \ref{EquationalMetaLogic},
and~\ref{InternalCompletenessSection}.  
These monads will be seen to arise from two different constructions,
respectively introduced by~\cite{Kock1970a} for symmetric monoidal closed
categories and by~\cite{KellyPower1993} for locally finitely presentable
categories enriched over symmetric monoidal closed categories that are locally
finitely presentable as closed categories.  Here we generalize these
developments to the setting of biclosed monoidal actions.

Kock's approach sees these monads as arising from a \emph{double-dualization}
adjunction, while Kelly and Power's approach induces them as endo-hom monoids
for a \emph{clone} closed structure.  The latter viewpoint is more general and
allows one to give abstract proofs; hence we introduce it first.  The former
viewpoint is elementary and allows one to apply it more directly.  Both
perspectives complement each other.

\subsection*{Clone monads} 

The constructions of this subsection were motivated by the developments
in~\cite[Sections~4 and~5]{KellyPower1993}.  

\begin{definition}
For \mh{\cat V}actions $\cat A$ and $\cat B$, let $\St(\cat A,\cat B)$ be
the category of strong functors $\cat A\rightarrow\cat B$ and morphisms
between them. 
\end{definition}
Note that the category $\St(\cat A,\cat B)$ is a \mh{\cat V}action with
structure given pointwise.

\begin{theorem}\label{CloneAdjunction}
Let $\cat A$ be a right-closed \mh{\cat V}action and $\cat B$ a
left-closed \mh{\cat V}action.  For every $X\in\cat A$, the evaluation at
$X$ functor $E_X:\St(\cat A,\cat B)\rightarrow \cat B: (F,\st) \mapsto FX$
has the clone functor $\clone{X,-}:\cat B\rightarrow\St(\cat A,\cat B):
Y\mapsto\big(\homact{\enrich{\cat A}(-,X),Y},\clst{X,Y}\big)$ as right
adjoint, where the strength
$\clst{X,Y}_{V,A}:V\monact\clone{X,Y}A\rightarrow\clone{X,Y}(V\monact A)$
is given by the transpose of 
$$\xymatrix@R=20pt{
\enrich{\cat A}(V\monact A,X)\monact(V\monact\acthom{\enrich{\cat A}(A,X),Y})
\ar[d]^-{
\alphaact
^{-1}} 
\\
(\enrich{\cat A}(V\monact A,X)\monten V)
\monact\acthom{\enrich{\cat A}(A,X),Y} 
\ar[d]^-{\geneval{V,A}{X}\monact\homact{\enrich{\cat A}(A,X),Y}}
\\
\enrich{\cat A}(A,X)
\monact
\acthom{\enrich{\cat A}(A,X),Y} 
\ar[d]^-{\eval{\enrich{\cat A}(A,X)}{Y}}
\\
Y
  }$$
with $\geneval{V,A}X:\enrich{\cat A}(V\monact A,X)\monten
V\rightarrow\enrich{\cat A}(A,X)$ in turn the transpose of 
$$\xymatrix@C=15pt{
(\enrich{\cat A}(V\monact A,X)\monten V)\monact A
\ar[rr
]^-{\alphaact
}
&&
\enrich{\cat A}(V\monact A,X)\monact (V\monact A)
\ar[rr]^-{\eneval{V\monact A}{X}}
&&
X
\enspace.
}$$
\end{theorem}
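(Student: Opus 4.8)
The plan is to establish the adjunction $E_X \dashv \clone{X,-}$ by exhibiting a natural bijection between morphisms $FX \to Y$ in $\cat B$ and strong functor morphisms $(F,\st) \to \clone{X,Y}$ in $\St(\cat A, \cat B)$. Given the right-closed structure on $\cat A$ and the left-closed structure on $\cat B$, the underlying functor $\clone{X,Y} = \homact{\enrich{\cat A}(-,X),Y}$ is built by composing the contravariant right-hom $\enrich{\cat A}(-,X):\cat A^{\op} \to \cat V$ with the left-hom $\homact{-,Y}:\cat V^{\op} \to \cat B$, so it is a genuine functor $\cat A \to \cat B$. The first task is therefore to verify that the displayed strength $\clst{X,Y}$ is well-defined and satisfies the two strong-functor coherence conditions, making $\clone{X,Y}$ an object of $\St(\cat A,\cat B)$.

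First I would check naturality of $\clst{X,Y}$ in both $V$ and $A$: this is a routine diagram chase using naturality of the associator $\alphaact$, of the left-hom counit $\eval{-}{Y}$, and of the generalized evaluation $\geneval{V,A}{X}$, the latter itself transposed from a composite built out of $\alphaact$ and the right-hom counit $\eneval{V\monact A}{X}$. Next I would verify the unit and associativity coherence axioms for a strength; here the key inputs are the triangle and pentagon coherence for the monoidal action (both on $\cat A$ via the right-homs and on $\cat B$ via the left-homs) together with the adjunction triangle identities relating $\eneval{}{}$ and $\eval{}{}$ to their transposes. These are formal but somewhat lengthy transpose computations.

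The heart of the proof is the adjunction bijection itself. Given $g:FX \to Y$ in $\cat B$, I would define its transpose $\wh{g}:(F,\st)\to\clone{X,Y}$ componentwise at $A\in\cat A$ as the transpose, under the left-hom adjunction $\enrich{\cat A}(A,X)\monact(-)\dashv \homact{\enrich{\cat A}(A,X),-}$, of the composite
$$
\enrich{\cat A}(A,X)\monact FA
\xrightarrow{\ \st_{\enrich{\cat A}(A,X),A}^{-1}\text{-style action}\ }
F\big(\enrich{\cat A}(A,X)\monact A\big)
\xrightarrow{\ F(\eneval{A}{X})\ }
FX
\xrightarrow{\ g\ }
Y,
$$
where $\eneval{A}{X}:\enrich{\cat A}(A,X)\monact A\to X$ is the right-hom counit; more precisely one uses the strength $\st$ of $F$ to move the action past $F$. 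Conversely, from a strong functor morphism $\tau:(F,\st)\to\clone{X,Y}$ one recovers a map $FX\to Y$ by evaluating $\tau_X:FX\to\clone{X,Y}X = \homact{\enrich{\cat A}(X,X),Y}$ and precomposing with the name of the identity $I\to\enrich{\cat A}(X,X)$, then using the left-hom counit. I expect the main obstacle to be showing that $\wh{g}$ is genuinely a strong functor morphism, \ie~that it commutes with the strengths $\st$ on $F$ and $\clst{X,Y}$ on $\clone{X,Y}$; this is exactly the coherence square whose verification forces the intricate definition of $\clst{X,Y}$ through $\geneval{V,A}{X}$, and unwinding it requires carefully transposing across both the right- and left-hom adjunctions and invoking the strength coherence of $(F,\st)$.

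Finally I would confirm that the two assignments $g\mapsto\wh{g}$ and $\tau\mapsto(\text{its evaluation})$ are mutually inverse, using the triangle identities of the left-hom and right-hom adjunctions, and that the bijection is natural in $X$ and $Y$. Naturality in $Y$ is immediate from functoriality of the left-hom, while naturality in $X$ follows from naturality of the right-hom counit $\eneval{}{}$; both reduce to transpose manipulations with no new content once the strong-morphism condition above is settled.
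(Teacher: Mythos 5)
Your proposal takes essentially the same route as the paper: the transpose of $g\comp F(\eneval{A}{X})\comp\st_{\enrich{\cat A}(A,X),A}$ in one direction, evaluation at $X$ followed by the name of the identity $I\to\enrich{\cat A}(X,X)$ and the left-hom counit in the other, with the bulk of the work being the coherence of the strength $\clst{X,Y}$ and the strong-morphism condition, exactly as in the paper's proof (which phrases the bijection as a form of Yoneda lemma). The only quibble is that the hom-set bijection for $E_X\dashv\clone{X,-}$ should be checked natural in $F$ and $Y$ with $X$ a fixed parameter, not ``in $X$ and $Y$''; naturality in $F$ is immediate from your componentwise formula, so this is a slip of wording rather than a gap.
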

\proof
The main lemmas needed for showing the naturality and coherence conditions
of the strength are as follows:
$$
\begin{array}{c}\xymatrix@C=70pt{
  \enrich{\cat A}(V\monact A,X)\monten U 
  \ar[r]^-{\enrich{\cat A}(h\monact A,X)\monten U}
  \ar[d]_-{\enrich{\cat A}(V\monact A,X)\monten h}
  &
  \enrich{\cat A}(U\monact A,X)\monten U
  \ar[d]^-{\geneval{U,A}X}
  \\
  \enrich{\cat A}(V\monact A,X)\monten V 
  \ar[r]_-{\geneval{V,A}X}
  &
  \enrich{\cat A}(A,X)
}\end{array}
\qquad
(h:U\rightarrow V\mbox{ in }\cat V)
$$
$$\xymatrix{
  & \ar[dl]_-{\enrich{\cat A}(\lambdaact_A,X)\monten I} 
  \enrich{\cat A}(A,X)\monten I \ar[dr]^-{\rho_{\enrich{\cat A}(A,X)}} & 
  \\
  \enrich{\cat A}(I\monact A,X)\monten I \ar[rr]_-{\geneval{I,A}X} & &
  \enrich{\cat A}(A,X)
}$$
$$\xymatrix@C=15pt{
  & 
  \ar[dl]_-{\alpha
}
  (\enrich{\cat A}(U\monact(V\monact A),X)\monten U)\monten V
  \ar[dr]^-{\geneval{U,V\monact A}X\monten V}
  & 
  \\
  \enrich{\cat A}(U\monact(V\monact A),X)\monten(U\monten V)
  \ar[d]_-{\enrich{\cat A}(\alphaact
,X)\monten(U\monten V)}
  &
  &
  \enrich{\cat A}(V\monact A,X)\monten V
  \ar[d]^-{\geneval{V,A}X}
  \\ 
  \enrich{\cat A}((U\monten V)\monact A,X)\monten(U\monten V)
  \ar[rr]_-{\geneval{U\monten V,A}X}
  & & 
  \enrich{\cat A}(A,X)
}$$

The natural bijective correspondence
$$\begin{array}{rcl}
  \algstr^X_{F,Y}:
  &
  \St(\cat A,\cat B)\big(F,\homact{\enrich{\cat A}(-,X),Y}\big)
  \iso
  \cat B(FX,Y)
& : \stmonmor^X_{F,Y}
\end{array}$$
is a form of Yoneda lemma.  Indeed, for a strong functor
morphism~$\tau:F\rightarrow\clone{X,Y}$, one sets 
$$
\algstr(\tau)
=
(\xymatrix{
  FX\ar[r]^-{\tau_X} & 
  \homact{\enrich{\cat A}(X,X),Y}
  \ar[r]^-{\cleval XY}
  & Y
})
$$
where the counit $\cleval XY$ is the composite
$$\xymatrix{
  \homact{\enrich{\cat A}(X,X),Y}
  \ar[rr]^-{\homact{\imath_X,Y}}
  &&
  \homact{I,Y}\ar[r]^-{\lambdaact^{-1}}_-\iso
  & I\monact\homact{I,Y}
  \ar[r]^-{\eval{I}{Y}}_-\iso
  &
  Y
}$$
for $\imath_X$ the transpose of $\lambdaact_X:I\monact X\rightarrow X$,
while for a morphism $f:FX\rightarrow Y$ one lets $\stmonmor(f)$ have
components given by the transpose of 
\begin{equation}\label{InterpretationMap}
\intmap(f)_A
=
(\,\xymatrix@C=15pt{
\enrich{\cat A}(A,X)\monact FA
\ar[rrr]^-{\st_{\enrich{\cat A}(A,X),A}}
&&&
F(\enrich{\cat A}(A,X)\monact A)
\ar[rr]^-{F(\eneval{A}{X})}
&&
FX
\ar[r]^-f
&
Y
})
\enspace.
\end{equation}
\endproof

\begin{corollary}
For a biclosed monoidal action~$\cat C$, the evaluation functor~$\St(\cat
C,\cat C)\times\cat C\rightarrow\cat C$ gives a right-closed monoidal
action structure on $\cat C$ for $\St(\cat C,\cat C)$ equipped with the
composition monoidal structure.
\end{corollary}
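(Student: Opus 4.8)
The plan is to produce three ingredients in turn: the composition monoidal structure on $\St(\cat C,\cat C)$, the action of this monoidal category on $\cat C$ by evaluation, and the right-closedness of that action. Only the last of these carries real content, and it is supplied essentially verbatim by Theorem~\ref{CloneAdjunction} specialised to $\cat A=\cat B=\cat C$.

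First I would equip $\St(\cat C,\cat C)$ with the composition monoidal structure. The tensor of $(F,\st^F)$ and $(G,\st^G)$ is the composite $F\comp G$ carrying the strength $F(\st^G_{V,C})\comp\st^F_{V,GC}\colon V\monact FGC\to FG(V\monact C)$, the unit is $(\Id_{\cat C},\id)$, and the tensor of two strong functor morphisms is their horizontal composition. Since composition of functors is strictly associative and unital, the associators and unitors are identities, so the structure is strict. The one genuine check here is that the composite strength satisfies the two strength coherence conditions and that horizontal composites of strong functor morphisms remain strong functor morphisms; both reduce to pasting the coherence diagrams for $\st^F$ and $\st^G$, and this is the only calculation I expect to carry out.

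Next, evaluation $E\colon\St(\cat C,\cat C)\times\cat C\to\cat C$, $(F,C)\mapsto FC$, is a bifunctor, acting by application in the second argument and by the components of strong functor morphisms in the first, its bifunctoriality amounting to the naturality of those transformations. Because $(F\comp G)C=F(GC)$ and $\Id\,C=C$ hold on the nose, the action associator $\alphaact_{F,G,C}$ and unitor $\lambdaact_C$ are identities, and the three coherence conditions for a \mh{\St(\cat C,\cat C)}action collapse to identities; thus $E$ is a strict action.

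Finally, right-closedness asks that for each $X\in\cat C$ the functor $(-)\monact X=E_X\colon\St(\cat C,\cat C)\to\cat C$, $F\mapsto FX$, have a right adjoint. Here the biclosedness of $\cat C$ enters: it makes $\cat C$ simultaneously right-closed and left-closed, so Theorem~\ref{CloneAdjunction} applies with $\cat A=\cat B=\cat C$ and exhibits the clone functor $\clone{X,-}\colon\cat C\to\St(\cat C,\cat C)$, $Y\mapsto\bigl(\homact{\enrich{\cat C}(-,X),Y},\clst{X,Y}\bigr)$, as the sought right adjoint, thereby providing the right-homs of the action. This assembles the three ingredients and proves the corollary, the biclosedness hypothesis being used exactly to license this appeal to Theorem~\ref{CloneAdjunction}.
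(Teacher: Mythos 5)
Your proposal is correct and follows essentially the same route as the paper, which states this as an immediate consequence of Theorem~\ref{CloneAdjunction}: the only substantive content is the right adjoint $\clone{X,-}$ to evaluation at $X$, obtained by instantiating that theorem at $\cat A=\cat B=\cat C$ (with biclosedness supplying both the right-closedness needed of $\cat A$ and the left-closedness needed of $\cat B$), while the strict composition monoidal structure on $\St(\cat C,\cat C)$ and its strict evaluation action are routine. Your identification of where the real work lies, and of the two coherence checks for composite strengths as the only genuine calculation outside the cited theorem, matches the intended reading.
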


Applying the general fact that every object 
of a right-closed \mh{\cat V}action 
canonically induces an endo right-hom monoid 
in $\cat V$ to the situation above, we have that every object of a
biclosed monoidal action $\cat C$ canonically induces a monoid in
$\St(\cat C,\cat C)$, \ie~a strong monad on $\cat C$, and we are lead to
the following.

\begin{definition}
For every object $X$ of a monoidal action $\cat C$, the strong monad
$\clmon X$ on $\cat C$, henceforth referred to as the \emph{clone monad},
has structure given by: 
\begin{itemize}
\item 
  the endofunctor $\clfun X = \clone{X,X}$ with strength $\DDst X=\clst{X,X}$,
\item
  the unit $\cleta X:\Id\rightarrow\clfun X$, and 
\item
  the multiplication $\clmu X:\clfun X\clfun X\rightarrow\clfun X$
\end{itemize}
with the latter two respectively arising as the transposes of
\begin{center}
$\xymatrix{\Id(X)\ar[r]^-{\id_X} & X}$
\enspace and \enspace
$\xymatrix@C=30pt{
  \clfun X\clfun XX \ar[rr]^-{\clfun X\cleval{X}{X}}
  &&
  \clfun XX\ar[r]^-{\cleval{X}{X}} & X
\enspace.
}$
\end{center}
\end{definition}

\subsection*{Double-dualization monads}

For an object~$X$ of a biclosed \mh{\cat V}action~$\cat C$, the monad on
$\cat C$ induced by the adjunction 
\begin{equation}\label{DDadjunction}
  \enrich{\cat C}(-,X)\adj \acthom{-,X}:\cat V^\op\rightarrow\cat C
\end{equation}
will be referred to as the \emph{double-dualization monad}.  
This notion and terminology were introduced by~\cite{Kock1970a} in the
context of symmetric monoidal closed categories.\footnote{The standard
  terminology used in the theoretical computer science literature for
  these monads is \emph{(linear) continuation monads}.
}

\begin{definition}\label{DDMdefinition}
The double-dualization monad $\DDmon X$ on a biclosed monoidal action
$\cat C$ is explicitly given by:
\begin{itemize}
\item 
  the endofunctor $\DDfun X(A) = \acthom{\enrich{\cat C}(A,X),X}$,

\item 
  the unit $\DDeta{X}_A:A\rightarrow K_X(A)$ with components the transpose of
  $\eneval{A}{X}:\enrich{\cat C}(A,X)\monact A\rightarrow X$, and

\item
  the multiplication $$\DDmu{X}_A=\acthom{\DDneg_{\enrich{\cat
  C}(A,X)},X}:K_X(K_X A)\rightarrow K_X(A)$$ where $\DDneg_V:
  V\rightarrow\enrich{\cat C}(\homact{V,X},X)$ is the counit of the
adjunction~(\ref{DDadjunction}) given by the transpose of $\eval{V}{X}:
  V\monact\acthom{V,X}\rightarrow X$.
\end{itemize}
\end{definition}

We observe that, as expected, the clone and double-dualization monads
coincide, from which one has as a by-product that the latter is strong.
\begin{theorem}\label{MonadCoincidence}
For every object $X$ of a biclosed monoidal action $\cat C$, 
$$
\clmon X = \DDmon X
\enspace.
$$
\end{theorem}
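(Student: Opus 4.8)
The plan is to establish the equality of strong monads by matching their constituent data. The first and immediate point is that the two underlying endofunctors coincide on the nose: since the left-hom $\homact{-,-}$ is by definition $\acthom{-,-}$, one has $\clfun X A = \clone{X,X}A = \acthom{\enrich{\cat C}(A,X),X} = \DDfun X A$, naturally in $A$. Thus it remains only to check that the clone unit and multiplication agree with their double-dualization counterparts; granting this, the equation $\clmon X = \DDmon X$ holds as strong monads, and as a by-product the clone strength $\DDst X$ is transported to $\DDmon X$, exhibiting the latter as strong.

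For the units, I would unwind $\cleta X = \stmonmor^X_{\Id,X}(\id_X)$ through the interpretation formula~\eqref{InterpretationMap}. With $F = \Id$ (identity strength) and $f = \id_X$, every factor in $\intmap(\id_X)_A$ degenerates and the map collapses to the right-hom counit $\eneval A X\colon \enrich{\cat C}(A,X)\monact A\to X$. Since $\cleta X_A$ is, by construction, the left-hom transpose of $\intmap(\id_X)_A$, it is the transpose of $\eneval A X$ --- which is exactly the definition of the double-dualization unit $\DDeta X_A$. Hence the two units are literally equal.

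The substantive step is the multiplication. I would transpose both $(\clmu X)_A$ and $\DDmu X_A$ across the left-hom adjunction $\enrich{\cat C}(A,X)\monact(-)\adj\acthom{\enrich{\cat C}(A,X),-}$, whose shared codomain is $\clfun X A$, and compare the resulting maps $\enrich{\cat C}(A,X)\monact\clfun X\clfun X A\to X$. Since $\clmu X = \stmonmor^X_{\clfun X\clfun X,X}\big(\cleval X X\comp\clfun X\cleval X X\big)$, the clone transpose is, by~\eqref{InterpretationMap}, the interpretation map $\intmap(f)_A$ for $F = \clfun X\clfun X$ equipped with the composite strength $\clfun X\DDst X\comp\DDst X\clfun X$ and $f = \cleval X X\comp\clfun X\cleval X X$. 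On the double-dualization side, $\DDmu X_A = \acthom{\DDneg_{\enrich{\cat C}(A,X)},X}$ transposes to $\eval{\enrich{\cat C}(A,X)}{X}$ precomposed with $\enrich{\cat C}(A,X)\monact\acthom{\DDneg_{\enrich{\cat C}(A,X)},X}$, which, using that $\DDneg_V$ is the right-hom transpose of $\eval V X$, unfolds into a composite of the two closing-structure evaluations. The goal is then to prove these composites equal by a diagram chase, invoking the explicit form of the clone counit $\cleval X X$ (through $\homact{\imath_X,X}$, $\lambdaact^{-1}$ and $\eval I X$), the triangle identities of the right- and left-hom adjunctions, and naturality of $\DDst X$, $\eneval{}{}$ and $\eval{}{}$.

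I expect this chase to be the main obstacle, the difficulty being essentially bookkeeping. The clone multiplication is presented through the composite-functor strength of $\clfun X\clfun X$ together with the nested counit $\cleval X X\comp\clfun X\cleval X X$, whereas the double-dualization multiplication packages everything into the single adjunction counit $\DDneg$. Reconciling the two amounts to a compatibility relating the right-hom counit $\eneval{}{}$, the left-hom counit $\eval{}{}$ and $\DDneg$; I would isolate this as the key lemma and then discharge the remaining coherence using the strength identities already verified for Theorem~\ref{CloneAdjunction}.
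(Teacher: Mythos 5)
Your proposal matches the paper's proof essentially step for step: the units are identified by observing that $\intmap(\id_X)_A$ collapses to the right-hom counit $\eneval AX$, whose transpose is $\DDeta X_A$, and the multiplications are compared by transposing both across the left-hom adjunction to maps $\enrich{\cat C}(A,X)\monact\clfun X\clfun XA\to X$ and chasing a diagram whose key ingredient is exactly the compatibility you isolate between $\cleval XX$, $\DDneg$, and the two evaluation counits (the paper discharges this via the square relating $\clfun Xf$ and $\homact{\name f,X}$, applied twice). The approach and the decomposition into lemmas are the same as the paper's.
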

\begin{proof}
Since
$\intmap(\id_X)_A=\eval A X:\enrich{\cat C}(A,X)\monact A\rightarrow X$
from~(\ref{InterpretationMap}), 
the units coincide.  
To establish the coincidence of the multiplications, we need show that the
diagram 
$$\xymatrix@C40pt{
\ar[ddd]_-{\DDneg\monact\id}
\enrich{\cat C}(A,X)\monact\clfun X\clfun X A 
\ar[r]^-{\DDst{}} & 
\clfun X(\enrich{\cat C}(A,X)\monact\clfun X A) 
\ar[r]^-{\clfun X(\DDst{})} & 
\clfun X\clfun X(\enrich{\cat C}(A,X)\monact A) 
\ar[d]^-{\clfun X\clfun X(\eneval{}{})}
\\
& &
\clfun X\clfun X X 
\ar[d]^-{\clfun X(\cleval XX)}
\\
& & 
\clfun X X
\ar[d]^-{\cleval XX}
\\
\enrich{\cat C}(\DDfun X A,X)\monact\DDfun X\DDfun X A
\ar[rr]_{\eval{}{}}
& & 
X
}$$
commutes.  This is done using the following fact
$$
\begin{array}{c}
\xymatrix@C=60pt{
\ar[d]_-{\homact{\name f,X}}
\clfun XY
\ar[r]^{\clfun Xf} & 
\clfun XX
\ar[d]^-{\cleval XX}
\\
\homact{I,X} \ar[r]|-\iso & X
}
\end{array}
\qquad 
\mbox{where}
\quad
\begin{array}{c}
  \xymatrix{Y\ar[r]^-f&X}
\\ \hline\hline
  \xymatrix{I\monact Y\ar[r]^-{f\comp\lambdaact_Y}&X}
\\ \hline\hline
\xymatrix{I\ar[r]_-{\name f}&\enrich{\cat C}(Y,X)}
\end{array}
$$
twice, with $f$ being $\eneval AX:\enrich{\cat C}(A,X)\monact A\rightarrow
X$ and $\eval{\enrich{\cat C}(A,X)}{X}:\enrich{\cat C}(A,X)\monact\clfun
X(A)\rightarrow X$, together with the commuting diagrams
$$\xymatrix{
  \ar[d]_-{\eval{}{}}
  \enrich{\cat C}(A,X)\monact\clfun XA \ar[r]^{\DDst{}} & 
  \clfun X(\enrich{\cat C}(A,X)\monact A)
  \ar[d]^-{\homact{\name{\eneval{}{}},X}}
  \\
  X
  \ar[r]|-\iso
  & 
  \homact{I,X}
}$$
and
$$\xymatrix@R15pt{
  \ar[dd]_{\DDneg\monact\id}
  \enrich{\cat C}(A,X)\monact\clfun X\clfun X A
  \ar[r]^-{\DDst{}} & 
  \clfun X(\enrich{\cat C}(A,X)\monact\clfun X A)
  \ar[d]^-{\homact{\name{\eval{}{}},X}}
  \\
  &
  \homact{I,X}
  \ar[d]|-\iso 
  \\
  \ar[r]_-{\eval{}{}}
  \enrich{\cat C}(\DDfun X A,X)\monact\DDfun X\DDfun XA
  & X
}$$
\end{proof}

\subsection*{Algebras}

By a \mh{T}algebra for an endofunctor~$T$ we mean an object $X$ together
with a 
map $TX\rightarrow X$; while a \mh{\monad T}algebra for a monad~$\monad
T$ refers to an Eilenberg-Moore algebra.

\begin{theorem}\label{SemanticsMapTheorem}
For every strong endofunctor $T$ (resp.\ strong monad $\monad T$) on a
biclosed monoidal action $\cat C$, the \mh{T}algebra (resp.\ \mh{\monad
T}algebra) structures on an object $X\in\cat C$ are in bijective
correspondence with the strong endofunctor (resp.\ strong monad) morphisms
$T\rightarrow\clfun X$ (resp.\ $\monad T\rightarrow\clmon X$).  
\end{theorem}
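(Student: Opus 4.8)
The plan is to obtain both statements from the adjunction $E_X\dashv\clone{X,-}$ of Theorem~\ref{CloneAdjunction}, instantiated at $\cat A=\cat B=\cat C$ with $Y=X$, so that $\clone{X,X}=\clfun X$. For the endofunctor case there is essentially nothing to do: the natural bijection $\algstr^X_{T,X}:\St(\cat C,\cat C)(T,\clfun X)\iso\cat C(TX,X):\stmonmor^X_{T,X}$ of Theorem~\ref{CloneAdjunction}, taken at $F=T$, is by definition a bijection between strong endofunctor morphisms $\tau:T\to\clfun X$ and maps $s:TX\to X$, the latter being exactly the \mh{T}algebra structures on $X$.

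For the monad case I would show that, under this bijection $s=\algstr(\tau)$ (equivalently $\tau=\stmonmor(s)$), the two Eilenberg--Moore axioms for $s$ match the two monad-morphism axioms for $\tau$. The tools are: the naturality of $\algstr$ in the functor variable, giving $\algstr(\tau\comp g)=\algstr(\tau)\comp g_X$ for every strong functor morphism $g$; and, writing $\nu=\cleval XX$, the defining transposes of the clone monad, which yield $\algstr(\cleta X)=\id_X$ and $\algstr(\clmu X)=\nu\comp\clfun X(\nu)$. For the unit, applying $\algstr$ to $\tau\comp\eta=\cleta X$ and using naturality with $g=\eta$ turns this into $s\comp\eta_X=\id_X$, and conversely by bijectivity; so the two unit laws are equivalent.

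The multiplication is the one step requiring genuine care. Here I would compute $\algstr(\clmu X\comp\tau\tau)$ by passing the component at $X$ through naturality, substituting $\algstr(\clmu X)=\nu\comp\clfun X(\nu)$ and $s=\nu\comp\tau_X$, and finally invoking the naturality square of $\tau$ at the map $s:TX\to X$, namely $\clfun X(s)\comp\tau_{TX}=\tau_X\comp T(s)$, to arrive at $\algstr(\clmu X\comp\tau\tau)=s\comp T(s)$; meanwhile naturality with $g=\mu$ gives $\algstr(\tau\comp\mu)=s\comp\mu_X$. Bijectivity of $\algstr$ then renders the monad-morphism multiplication law equivalent to the Eilenberg--Moore associativity law. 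Conceptually this is the standard fact that, in the right-closed monoidal action of $\St(\cat C,\cat C)$ on $\cat C$ noted above, an action of the monoid $\monad T$ on the object $X$ is the same as a monoid morphism $\monad T\to\clmon X$ into the endo-hom monoid, an Eilenberg--Moore algebra being precisely such an action. I expect the interplay of the two distinct naturalities---that of the adjunction iso $\algstr$ and that of the transformation $\tau$---in the multiplication step to be the main obstacle.
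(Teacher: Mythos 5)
Your proposal is correct and follows essentially the same route as the paper: both cases are reduced to the adjunction bijection of Theorem~\ref{CloneAdjunction}, and the monad laws are matched up using the naturality and bijectivity of $\algstr$ together with the definitions of $\cleta X$ and $\clmu X$ as transposes of $\id_X$ and $\cleval XX\comp\clfun X(\cleval XX)$. The paper merely packages the same computation differently---presenting the multiplication step as an explicit commuting diagram whose key inner cell is precisely your naturality square $\clfun X(s)\comp\tau_{TX}=\tau_X\comp T(s)$, and handling the direction $\tau\mapsto\algstr(\tau)$ by pulling back the $\clmon X$-algebra $\cleval XX$ along the monad morphism $\tau$, a shortcut your single bijectivity argument subsumes.
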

\proof
For endofunctor algebras and strong endofunctor morphisms, the result
follows from Theorem~\ref{CloneAdjunction}, while for monad algebras $s$
and strong monad morphisms $\tau$ one has that $\algstr(\tau)=\cleval
XX\comp\tau_X$ is a \mh{\monad T}algebra because $\cleval XX$ is a
\mh{\clmon X}algebra, and that $\stmonmor(s)$ is a strong monad morphism
because the diagram
$$\xymatrix{
  \ar@/_1em/[ddr]_-{\mu_X} \ar[dr]^-{Ts}
  TTX \ar[rr]^-{\stmonmor(s)_{TX}} 
  && 
  \clfun{X}TX
  \ar[dr]^-{\clfun Xs} \ar[rr]^-{\clfun{X}\stmonmor(s)_X} 
  &&
  \clfun{X}\clfun{X}X 
  \ar@/^1em/[ddl]^-{\clmu X}
  \ar[dl]_-{\clfun{X}\cleval XX}
  \\
  & TX \ar[drr]_-s \ar[rr]^-{\stmonmor(s)_X} 
  && 
  \clfun{X}X \ar[d]_-{\cleval XX} 
  & 
  \\
  & TX \ar[rr]_-s & & X &  
}$$
commutes and because the commutativity of the diagram on the left below
$$
\begin{array}{c}
\xymatrix@R=15pt{
  \ar[ddr]_-{\id_X}
  X\ar[r]^-{\eta_X} & TX \ar[dd]^-{s} \ar[dr]^-{\stmonmor(s)_X}
  \\
  && \clfun{X}X \ar[dl]^-{\cleval X X}
  \\
  & X
}\end{array}
\qquad\qquad\quad
\begin{array}{c}\xymatrix{
  \ar[dr]_-{\cleta X}
  \Id\ar[r]^-{\eta} & T \ar[d]^-{\stmonmor(s)} & 
  \\
  & \clfun{X}
}\end{array}
$$
implies that of the one on the right above.
\endproof

\begin{corollary}\label{DoubleDualizationChange}
Let $T$ (resp.\ $\monad T$) be a strong functor (resp.\ strong monad) on a
\mbox{biclosed} monoidal action $\cat C$.  For every \mh{T}algebra (resp.\
\mh{\monad T}algebra) $(X,s)$ and \mh{\DDfun X}algebra (resp.\
\mh{\DDmon X}algebra) $(Y,k)$, we have
$$\xymatrix{
  & \ar[dl]_-{\stmonmor(s)}
  T 
  \ar[dr]^-{\stmonmor(s_k)}
  & 
  \\
  \DDfun X \ar[rr]_-{\stmonmor(k)} & & \DDfun Y
}$$
where $s_k$ is the \mh{T}algebra (resp.\ \mh{\monad T}algebra)
$$
\xymatrix@C=20pt{
  TY \ar[rr]^-{\stmonmor(s)_Y} && \DDfun X(Y) \ar[r]^-k & Y
  \enspace.
}
$$ 
\end{corollary}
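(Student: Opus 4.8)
The plan is to reduce the claimed commuting triangle to a single identity between algebra structures, exploiting the bijective correspondence $\stmonmor$ of Theorem~\ref{SemanticsMapTheorem} together with its explicit inverse $\algstr$ recorded in the proof of Theorem~\ref{CloneAdjunction}. Throughout I would freely identify the clone and double-dualization constructions, $\clfun{(-)}=\DDfun{(-)}$ and $\clmon{(-)}=\DDmon{(-)}$, as licensed by Theorem~\ref{MonadCoincidence}; this is what makes the clone counit $\cleval{}{}$ available for manipulating the morphisms $\stmonmor(-)$. Explicitly, $\stmonmor(s)\colon T\rightarrow\DDfun X$ corresponds to the $\mh{T}{algebra}$ $s$ and $\stmonmor(k)\colon\DDfun X\rightarrow\DDfun Y$ to the $\mh{\DDfun X}{algebra}$ $k$ on $Y$, so the composite $\stmonmor(k)\comp\stmonmor(s)\colon T\rightarrow\DDfun Y$ is well defined. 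Since $\stmonmor$ is a bijection, proving $\stmonmor(k)\comp\stmonmor(s)=\stmonmor(s_k)$ is equivalent to proving the identity $\algstr\big(\stmonmor(k)\comp\stmonmor(s)\big)=s_k$ of $\mh{T}{algebra}$ (resp.\ $\mh{\monad T}{algebra}$) structures on $Y$.

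First I would unfold $\algstr$ on the composite using the formula $\algstr(\tau)=\cleval YY\comp\tau_Y$, valid for any morphism $\tau\colon T\rightarrow\clone{Y,Y}=\DDfun Y$; here the object $Y$ plays simultaneously the role of evaluation object and of codomain in the adjunction $E_Y\adj\clone{Y,-}$. Because composition of natural transformations is computed componentwise, the $Y$-component of $\tau=\stmonmor(k)\comp\stmonmor(s)$ is $\stmonmor(k)_Y\comp\stmonmor(s)_Y$, so that
\[
\algstr\big(\stmonmor(k)\comp\stmonmor(s)\big)
=\cleval YY\comp\stmonmor(k)_Y\comp\stmonmor(s)_Y
\enspace.
\]

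Next I would recognise the leading two factors. Since $\algstr$ and $\stmonmor$ are mutually inverse and $k$ is by definition the $\mh{\DDfun X}{algebra}$ corresponding to $\stmonmor(k)$, one has $\cleval YY\comp\stmonmor(k)_Y=\algstr(\stmonmor(k))=k$. Substituting gives $\algstr\big(\stmonmor(k)\comp\stmonmor(s)\big)=k\comp\stmonmor(s)_Y$, which is exactly the structure $s_k$ of the statement; applying $\stmonmor$ to both sides settles the strong-functor case.

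For the monad case no further computation is needed. The composite $\stmonmor(k)\comp\stmonmor(s)\colon\monad T\rightarrow\DDmon X\rightarrow\DDmon Y$ is a composite of strong monad morphisms, hence again a strong monad morphism; by Theorem~\ref{SemanticsMapTheorem} it is therefore $\stmonmor(-)$ of a genuine $\mh{\monad T}{algebra}$, which the displayed computation identifies as $s_k$. In particular this shows, as a by-product, that $s_k$ is indeed a $\mh{\monad T}{algebra}$ and not merely a $\mh{T}{algebra}$. The only point demanding care---and the closest thing to an obstacle---is the bookkeeping of which adjunction the counit $\cleval YY$ and the map $\algstr$ refer to, together with the observation that $\algstr(\tau)$ depends on $\tau$ solely through its single component $\tau_Y$; once this is fixed, the argument is a two-line diagram chase.
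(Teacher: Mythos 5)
Your proposal is correct and is essentially the argument the paper intends: the corollary is stated without explicit proof precisely because it reduces, via the mutually inverse bijection $\algstr\dashv\stmonmor$ of Theorem~\ref{CloneAdjunction}/\ref{SemanticsMapTheorem} and the formula $\algstr(\tau)=\cleval YY\comp\tau_Y$, to the one-line computation $\algstr\big(\stmonmor(k)\comp\stmonmor(s)\big)=k\comp\stmonmor(s)_Y=s_k$. Your additional observation that the monad case follows because composites of strong monad morphisms are strong monad morphisms (so $s_k$ is automatically an Eilenberg--Moore algebra) is exactly the right way to dispose of that half as well.
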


\begin{example}\label{ExponentialDDalgebras}
For every $X\in\cat C$ and $V\in\cat V$, the map
\begin{equation*}\label{DDnegPowerAlgebraStructure}
\homact{\DDneg_V,X}
: \homact{\enrich{\cat C}(\homact{V,X},X),X}
  \rightarrow
  \homact{V,X}
\end{equation*}
provides a \mh{\DDmonad X}algebra structure on $\homact{V,X}$, and we have
the following.
\begin{enumerate}
\item\label{ExponentialDDalgebrasOne}
The associated strong monad morphism
$\stmonmor(\homact{\DDneg_V,X}):
(\DDmonad X,\DDst X)\rightarrow(\DDmonad {\homact{V,X}},\DDst {\homact{V,X}})$ 
has components
$
\homact{\enrich{\cat C}(A,X),X}
\rightarrow
\homact{\enrich{\cat C}(A,\homact{V,X}),\homact{V,X}}
$
given by the double transpose of the composite
$$
\xymatrix{
  V\monact\big(\enrich{\cat C}(A,\homact{V,X})\monact\homact{\enrich{\cat
      C}(A,X),X}\big) 
  \ar[d]^-{
\alphaact
^{-1}}
  \\
  \big(V\monten\enrich{\cat
    C}(A,\homact{V,X})\big)\monact\homact{\enrich{\cat C}(A,X),X}
  \ar[d]^-{\angeneval{V}{A}\monact\homact{\enrich{\cat C}(A,X),X}}
  \\
  \enrich{\cat C}(A,X)\monact\homact{\enrich{\cat C}(A,X),X}
  \ar[d]^-{\eval{\enrich{\cat C}(A,X)}{X}}
  \\
  X
}
$$
where $\angeneval{V}{A}: 
V\monten\enrich{\cat C}(A,\homact{V,X}) \rightarrow \enrich{\cat C}(A,X)$
is in turn the transpose of 
$$
\xymatrix@C=20pt{
  \big(V\monten\enrich{\cat C}(A,\homact{V,X})\big) \monact A
  \ar[r
]^-{\alphaact
}
  &
  V\monact(\enrich{\cat C}(A,\homact{V,X}) \monact A)
  \ar[rr]^-{V\monact\eneval{A}{\homact{V,X}}}
  &&
  V\monact\homact{V,X} 
  \ar[r]^-{\eval{V}{X}}
  & 
  X
}
\ .
$$

\item\label{ExponentialDDalgebrasTwo}
For every strong functor $T$ (resp.\ strong monad $\monad T$) and
\mh{T}algebra (resp.\ \mh{\monad T}algebra) $(X,s)$, the 
\mh{T}algebra (resp.\ \mh{\monad T}algebra) $s_{\homact{\delta_V,X}}$ on
$\homact{V,X}$, for which we will henceforth simply write 
\begin{equation*}\label{sVDefinitionOne}
s_V:T\homact{V,X}\rightarrow\homact{V,X}
\enspace,
\end{equation*}
is the transpose of the composite
\begin{equation*}\label{sVDefinitionTwo}
\xymatrix@C=20pt{
V\monact T\homact{V,X} \ar[rr]^-{\st_{V,\homact{V,X}}} && 
T(V\monact \homact{V,X}) \ar[rr]^-{T(\eval{V}{X})} && 
TX \ar[r]^-{s} & X
\enspace.
}\end{equation*}
\end{enumerate}
\end{example}

\section{Free 
algebras}
\label{FreeStrongMonadsSection}

The category of algebras for an endofunctor is said to admit free algebras
whenever the forgetful functor has a left adjoint.  In this case, the
induced monad is the free monad on the endofunctor.  A wide class of
examples of strong monads arises as such, since the strength of an
endofunctor on a left-closed monoidal action canonically lifts to the
free monad on the endofunctor.  This section establishes a general form of
this result~(Theorem~\ref{thm:general-free-strong-monad}), showing that it
holds for every monad arising from free algebras with respect to full
subcategories of the endofunctor algebras that are closed under left-homs.

\subsection*{Endofunctor algebras}

For an endofunctor~$T$ on a category $\cat C$, the category~$\catalg T$
has \mh{T}algebras as objects and morphisms~$h:(X,s)\rightarrow(Y,t)$
given by maps~$h:X\rightarrow Y$ such that 
$h\comp s = t\comp Th$.
We write $U_T$ for the forgetful functor~${\catalg{T}\rightarrow\cat C}:
(X,s) \mapsto X$.

\begin{definition}
\label{def:act-on-cat-alg}
For a strong endofunctor~$(T,\st)$ on a left-closed \mh{\cat
  V}action~$(\cat C,\monact)$, for every $V\in\cat V$, the left-hom
endofunctor~$\acthom{V,-}$ on $\cat C$ lifts to $\catalg{T}$ by setting 
$$
\big[V,(X,s:TX\rightarrow X)\big] 
\ =\  
\big( \, \acthom{V,X} \, , \, \acthomalgstr V s:T\acthom{V,X}\rightarrow
\acthom{V,X} \, \big)
$$
for $\acthomalgstr V s$ as given in 
Example~\ref{ExponentialDDalgebras}\,(\ref{ExponentialDDalgebrasTwo}).
\end{definition}

For a strong monad $\monad T$ on a left-closed monoidal action $\cat C$,
the left-homs do not only lift to $\catalg T$ but also to the category of
Eilenberg-Moore algebras $\catAlg{\cat C}{\monad T}$.

\begin{lemma}\label{lemma:MES-closed-1}
Let $\monad T$ be a strong monad on a left-closed $\cat
V$-action~$\cat C$.  For every \mh{T}{algebra} $(X,s)$, 
\begin{center}
$
(X,s) \in \catAlg{\cat C}{\monad T}
$
\enspace iff \enspace
$(\acthom{V,X},s_V) \in \catAlg{\cat C}{\monad T}$
for all $V\in\cat V$
\enspace.
\end{center}
\end{lemma}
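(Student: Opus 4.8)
The plan is to treat the two implications separately: the forward direction is essentially already recorded in the preceding material, while the reverse direction carries the real content and is obtained by specialising to $V=I$.

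For the forward implication, suppose $(X,s)\in\catAlg{\cat C}{\monad T}$. Then for each $V\in\cat V$ the pair $(\acthom{V,X},s_V)$ is, by construction, the $\monad T$-algebra $s_{\homact{\DDneg_V,X}}$ produced in Example~\ref{ExponentialDDalgebras}(\ref{ExponentialDDalgebrasTwo}); equivalently, one invokes Corollary~\ref{DoubleDualizationChange} with the Eilenberg--Moore $\DDmon X$-algebra $(\acthom{V,X},\homact{\DDneg_V,X})$ supplied by Example~\ref{ExponentialDDalgebras}(\ref{ExponentialDDalgebrasOne}), whose conclusion is precisely that $s_V=s_{\homact{\DDneg_V,X}}$ is an Eilenberg--Moore $\monad T$-algebra. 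Thus $(\acthom{V,X},s_V)\in\catAlg{\cat C}{\monad T}$ for every $V$, with no hypothesis beyond $(X,s)$ being Eilenberg--Moore.

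For the reverse implication, I would specialise the assumption to $V=I$. The isomorphism $\lambdaact:I\monact(-)\iso\Id$ turns the left-hom adjunction $I\monact(-)\adj\acthom{I,-}$ into a copy of the identity adjunction, so its counit $\eval{I}{-}$ is invertible and yields a canonical isomorphism $\theta_X=\eval{I}{X}\comp\lambdaact_{\acthom{I,X}}^{-1}:\acthom{I,X}\iso X$. The key step is to verify that $\theta_X$ is an isomorphism of endofunctor $T$-algebras $(\acthom{I,X},s_I)\to(X,s)$. Writing $s_I$ out as the transpose of $s\comp T(\eval{I}{X})\comp\st_{I,\acthom{I,X}}$ and applying naturality of $\lambdaact$, the required identity $\theta_X\comp s_I=s\comp T\theta_X$ reduces to $\st_{I,\acthom{I,X}}=T(\lambdaact_{\acthom{I,X}})^{-1}\comp\lambdaact_{T\acthom{I,X}}$, which is exactly the unit coherence law $T(\lambdaact_C)\comp\st_{I,C}=\lambdaact_{TC}$ of the strength instantiated at $C=\acthom{I,X}$.

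Granting this, the conclusion follows formally: the Eilenberg--Moore identities $s\comp\eta_X=\id_X$ and $s\comp\mu_X=s\comp Ts$ are equational and so are transported along the $T$-algebra isomorphism $\theta_X$ from $(\acthom{I,X},s_I)$, which is Eilenberg--Moore by the hypothesis taken at $V=I$. The main obstacle is exactly the coherence bookkeeping that identifies $s_I$ with $s$ across $\theta_X$; once the transpose defining $s_I$ is made explicit, this is a short diagram chase driven solely by the strength unit law and the naturality of $\lambdaact$, making no use of the monad multiplication.
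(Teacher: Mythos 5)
Your reverse implication is exactly the paper's argument: specialise to $V=I$, check that the canonical isomorphism $\acthom{I,X}\iso X$ is a morphism of endofunctor $T$-algebras $(\acthom{I,X},s_I)\to(X,s)$ via the unit coherence law $T(\lambdaact_C)\comp\st_{I,C}=\lambdaact_{TC}$ of the strength, and transport the Eilenberg--Moore identities across the isomorphism. That part is correct and complete.

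The forward implication, however, has a genuine gap of generality. The lemma assumes only that the $\cat V$-action is \emph{left}-closed, but your argument runs entirely through the double-dualization apparatus: the algebra $\homact{\DDneg_V,X}$ of Example~\ref{ExponentialDDalgebras}\,(\ref{ExponentialDDalgebrasOne}) and Corollary~\ref{DoubleDualizationChange} both presuppose the existence of the monad $\DDmon X$, whose underlying functor $\acthom{\enrich{\cat C}(-,X),X}$ requires the right-homs $\enrich{\cat C}(-,X)$ and hence a \emph{biclosed} action. The identification $s_V=s_{\homact{\DDneg_V,X}}$ is likewise only available in that setting; in the merely left-closed case $s_V$ is \emph{defined} by the explicit transpose formula of Example~\ref{ExponentialDDalgebras}\,(\ref{ExponentialDDalgebrasTwo}), and $\DDneg_V$ does not exist. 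The paper is explicit about this distinction: the remark immediately following the lemma observes that the identities~(\ref{PowerAlgebraStructure}) follow from Corollary~\ref{DoubleDualizationChange} and Example~\ref{ExponentialDDalgebras} only \emph{under the additional assumption that the action is biclosed}. The paper's own proof instead verifies the two Eilenberg--Moore equations for $s_V$ directly, by transposing each along the adjunction $V\monact(-)\adj\acthom{V,-}$ and using the strength coherence laws for $\eta$ and $\mu$ together with $s\comp\eta_X=\id_X$ and $s\comp Ts=s\comp\mu_X$; this is the computation you would need to supply to prove the statement at its stated level of generality. (Since the lemma is applied in the paper to MESs, whose actions are biclosed, your route does cover the intended applications---but not the lemma as stated.)
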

\proof
($\Rightarrow$) For $(X,s)\in\catAlg{\cat C}{\monad T}$, the equalities
\begin{equation}\label{PowerAlgebraStructure}
\begin{array}{rclcl}
\acthomalgstr V s \comp \eta_{\acthom{V,X}} 
&=&  
\id_{\acthom{V,X}}
&:& 
\acthom{V,X}\rightarrow\acthom{V,X}
\;,
\\[2mm]
\mu_{\acthom{V,X}} \comp \acthomalgstr V s
&=&  
T(\acthomalgstr V s) \comp \acthomalgstr V s
&:& 
TT\acthom{V,X}\rightarrow\acthom{V,X}
\end{array}
\end{equation}
are readily established by considering their transposes.

($\Leftarrow$)
Since the canonical isomorphism~$X\iso\acthom{I,X}$ is a $T$-algebra
isomorphism~$(X,s)\iso(\acthom{I,X},s_I)$, it follows that
$(\acthom{I,X},s_I)\in\cat C^\monad{T}$ implies $(X,s)\in\cat
C^\monad{T}$.
\endproof

\begin{remark}
Under 
the assumption that the action is biclosed,~(\ref{PowerAlgebraStructure})
already follows from Corollary~\ref{DoubleDualizationChange} and
Example~\ref{ExponentialDDalgebras}\,(\ref{ExponentialDDalgebrasTwo}).
\end{remark}

\subsection*{Strong free algebras}

The main result of the section~\cite{FioreHur2008,Hur2010} follows.

\begin{theorem}
\label{thm:general-free-strong-monad}
Let $(F,\st)$ be a strong endofunctor on a left-closed \mh{\cat
V}action~$(\cat C,\monact)$, and consider a full subcategory $\cat A$ of
$\catalg F$ such that the forgetful functor~$\cat A \rightarrow \cat C$ has a
left adjoint, say mapping objects~$X\in \cat C$ to
\mh{F}algebras~${(TX,\tau_X:FTX \rightarrow TX)\in \cat A}$.
\begin{equation}\label{Aadjunction}
\begin{minipage}{\textwidth}
\xymatrix@C=1.5pc@R=2.5pc{
  \cat{A}\,
  \ar@{^(->}[r]
  \ar[rd]
  &
  \catalg{F}
  \ar[d]^-{U_F}
  \\
  &
  \cat{C}
  \ar@<5pt>@/^10pt/[lu]^-{}
  \ar@<6pt>@{}[lu]|(.5)
  {\leftthreetimes}
}
\end{minipage}
\end{equation}
If $\cat A$ is closed under the left-hom endofunctor~$\acthom{V,-}$ for all
$V\in\cat V$, then 
\begin{enumerate}
\item
\label{thm:general-free-strong-monad-1}
for every $(Y,t)\in\cat A$ and map~$f:V\monact X
\rightarrow Y$ in $\cat C$, 
there exists a unique extension map~$f^\#:V\monact TX \rightarrow Y$ in
$\cat C$ such that the diagram 
\begin{equation}\label{eqn:param-ind}
\begin{minipage}{\textwidth}
\xymatrix@C=3pc{
  V \monact FTX
  \ar[d]_-{V\monact \tau_X}
  \ar[r]^-{\st_{V,TX}}
  &
  F (V\monact TX)
  \ar[r]^-{F f^\#}
  &
  FY
  \ar[d]^-{t}
  \\
  V \monact TX
  \ar[rr]^-{\exists !\, f^\#}
  &&
  Y
  \\
  V \monact X
  \ar[u]^-{V\monact \eta_X}
  \ar[rru]_-{f}
}
\end{minipage}
\end{equation}
commutes, and

\item
\label{thm:general-free-strong-monad-2}
the monad~$\monad T=(T,\eta,\mu)$ induced by the
adjunction~(\ref{Aadjunction}) canonically becomes a strong monad, with
the components of the lifted strength~$\wh{\st}$ given by the unique maps
such that the diagram 
\begin{equation}
\label{eqn:monad-str-gen}
\begin{minipage}{\textwidth}
\xymatrix@C=35pt{
V\monact FTC
  \ar[r]^-{\st_{V,TC}}
  \ar[d]_-{V\monact\tau_C}
&
F(V\monact TC)
  \ar[r]^-{F(\wh{\st}_{V,C})}
&
FT(V\monact C)
  \ar[d]^-{\tau_{V\monact C}}
\\
V\monact TC
  \ar@{-->}[rr]^-{\exists{!}\,\wh{\st}_{V,C}}
&&
T(V\monact C)
\\
V\monact C
  \ar[u]^-{V\monact \eta_C}
  \ar[urr]_-{\eta_{V\monact C}}
}
\end{minipage}
\end{equation}
commutes.
\end{enumerate}
\end{theorem}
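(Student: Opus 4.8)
The plan is to derive both parts from the universal property of the free $F$-algebra by transposing across the left-hom adjunction $V\monact(-)\adj\acthom{V,-}$, the closure hypothesis on $\cat A$ being exactly what makes this transposition land inside $\cat A$.

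For part~(\ref{thm:general-free-strong-monad-1}) I fix $(Y,t)\in\cat A$ and $f:V\monact X\to Y$. By Example~\ref{ExponentialDDalgebras}(\ref{ExponentialDDalgebrasTwo}) the left-hom $\acthom{V,Y}$ carries the lifted $F$-algebra structure $\acthomalgstr V t$, and the closure hypothesis gives $(\acthom{V,Y},\acthomalgstr V t)\in\cat A$. I then transpose $f$ to $\wt f:X\to\acthom{V,Y}$ and apply the universal property of the adjunction~(\ref{Aadjunction}): there is a unique $F$-algebra morphism $h:(TX,\tau_X)\to(\acthom{V,Y},\acthomalgstr V t)$ with $h\comp\eta_X=\wt f$, and I set $f^\#:=\eval V Y\comp(V\monact h)$, the transpose of $h$. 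The substance of the step is that the two defining properties of $h$ transpose precisely to the two parts of~(\ref{eqn:param-ind}). Indeed, writing $\acthomalgstr V t$ as the transpose of $t\comp F(\eval V Y)\comp\st_{V,\acthom{V,Y}}$ and using the naturality of $\st$ in its second argument, the algebra law $h\comp\tau_X=\acthomalgstr V t\comp Fh$ transposes to commutativity of the upper square, whereas $h\comp\eta_X=\wt f$ transposes to the lower triangle. Since transposition is a bijection on the relevant hom-sets, existence and uniqueness of $f^\#$ follow from those of $h$.

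For part~(\ref{thm:general-free-strong-monad-2}) I observe that~(\ref{eqn:monad-str-gen}) is the instance of~(\ref{eqn:param-ind}) with $X:=C$, target the free algebra $(T(V\monact C),\tau_{V\monact C})\in\cat A$, and $f:=\eta_{V\monact C}$; so part~(\ref{thm:general-free-strong-monad-1}) immediately produces the unique component $\wh{\st}_{V,C}:=(\eta_{V\monact C})^\#$. The remaining obligations---naturality of $\wh{\st}$ in $V$ and $C$, the two strength-coherence axioms, and the requirement that $\eta$ and $\mu$ be strong functor morphisms---are each an identity between two maps $V\monact T(-)\to(Y,t)$ whose shared target lies in $\cat A$, so for each one I invoke the uniqueness clause just proved: it suffices to check that both sides restrict along the appropriate $V\monact\eta$ to a common map and that both satisfy the corresponding instance of the upper square of~(\ref{eqn:param-ind}). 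The condition that $\eta$ be a strong morphism is literally the lower triangle of~(\ref{eqn:monad-str-gen}), hence holds by construction, and naturality together with the unit-coherence axiom reduce to one-step chases using naturality of $\st$ and the unit axiom of $(F,\st)$.

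I expect the main obstacle to lie in the associativity strength-coherence axiom and in the $\mu$-compatibility law. For the latter, a short computation with the monad laws and the lower triangle shows that both composites precomposed with $V\monact\eta_{TC}$ equal $\wh{\st}_{V,C}$, so the relevant parameter map is $\wh{\st}_{V,C}$ itself; the real work is then to verify that each composite satisfies the upper square of~(\ref{eqn:param-ind}). This verification interleaves the defining property of $\wh{\st}$ with the naturality and coherence of the strength $\st$ of $F$ and with the monad laws for $(T,\eta,\mu)$. The bookkeeping is routine but delicate, and it is the one place where more than a single diagram chase is required.
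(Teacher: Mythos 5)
Your proposal is correct and follows essentially the same route as the paper: part~(1) by transposing the universal property of the free $F$-algebra across $V\monact(-)\dashv\acthom{V,-}$ using the closure of $\cat A$ under left-homs, and part~(2) by repeatedly invoking the uniqueness clause of~(\ref{eqn:param-ind}), including the identification of $\wh{\st}_{V,C}$ itself as the common restriction in the $\mu$-compatibility law, exactly as in the paper's proof.
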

\proof
(\ref{thm:general-free-strong-monad-1}) For every $F$-algebra~$(Y,t)$ in
$\cat{A}$ also the $F$-algebra~$(\acthom{V,Y},t_V)$ is in $\cat A$.  Thus,
for every map~$f:V\monact X \rightarrow Y$, by the universal property of
the adjunction, there exists a unique extension map~$f^\#:V\monact TX
\rightarrow Y$ making the following diagram commutative
$$
\xymatrix@C=5pc{
  F (TX)
  \ar[r]^-{F (\ol{f^\#})}
  \ar[d]_-{\tau_X}
  &
  F \acthom{V,Y}
  \ar[d]^-{\acthomalgstr V t}
  \\
  TX
  \ar[r]^-{\ol{f^\#}}
  &
  \acthom{V,Y}
  \\
  X
  \ar[u]^-{\eta_X}
  \ar[ru]_-{\ol{f}}
  &
}
$$
where $\ol{f}$ and $\ol{f^\#}$ respectively denote the transposes of the maps
$f$ and $f^\#$.
Transposing this diagram, we obtain diagram~(\ref{eqn:param-ind}) and we
are done.

(\ref{thm:general-free-strong-monad-2})  The above item guarantees the
unique existence of the maps~$\wh{\st}_{V,C}$.  We need show that these
are natural in $V$ and $C$, and satisfy the four coherence conditions of
strengths.

The naturality of $\wh{\st}$,~\ie~that
$
T(f\monact g)\comp \wh{\st}_{V,C} 
= 
\wh{\st}_{V',C'} \comp (f\monact T(g))
$ 
for $f:V\rightarrow V'$ in $\cat V$ and $g:C\rightarrow C'$ in $\cat C$,
is shown by establishing that both these maps are the unique extension of
the composite 
$\xymatrix@C10pt{
  V\monact C \ar[rr]^-{f\monact g} && V'\monact
  C'\ar[rrr]^-{\eta_{V'\monact C'}} &&& T(V'\monact C')
}$.

The first coherence condition
$
T(\lambdaact_C)\comp \wh{\st}_{I,C} = \lambdaact_{TC}
$
is shown by establishing that both these maps are the unique extension of
the composite 
$\xymatrix{
  I\monact C \ar[r]^-{\lambdaact_C} & C \ar[r]^-{\eta_C} & TC
}$.

The second coherence condition
$
T(\alphaact_{U,V,C}) \comp \wh{\st}_{U\monten V, C} 
=
\wh{\st}_{U,V\monact C} \comp (U\monact \wh{\st}_{V,C}) \comp
\alphaact_{U,V,TC}
$
is shown by establishing that both these maps are the unique extension of
the composite
$\xymatrix@C20pt{
  (U\monten V)\monact C \ar[r]^-{\alphaact} & 
  U\monact (V\monact C) \ar[rr]^-{\eta_{U\monact(V\monact C)}} && 
  T(U\monact (V\monact C)) 
}$.

The third coherence condition
$
\wh{\st}_{V,C} \comp (V\monact \eta_C) 
= 
\eta_{V\monact C}
$
is the bottom of diagram~(\ref{eqn:monad-str-gen}).

The last coherence condition
$
\wh{\st}_{V,C} \comp (V\monact \mu_C) 
= 
\mu_{V\monact C} \comp T(\wh{\st}_{V,C}) \comp \wh{\st}_{V,TC}
$
is shown by establishing that both these maps are the unique extension of
$
\wh{\st}_{V,C}: V\monact TC \rightarrow T(V\monact C)
$.
\endproof

\begin{corollary}
For a strong endofunctor $F$ on a left-closed monoidal action $\cat C$ for
which the forgetful functor~$U_F$ has a left adjoint, the induced monad on
$\lscat C$ is strong.
\end{corollary}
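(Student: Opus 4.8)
The plan is to specialize Theorem~\ref{thm:general-free-strong-monad} to the case in which the full subcategory~$\cat A$ is taken to be the whole category~$\catalg F$ of $F$-algebras.

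First I would observe that the present hypothesis---that the forgetful functor~$U_F:\catalg F\rightarrow\cat C$ has a left adjoint---is exactly the adjunction hypothesis~(\ref{Aadjunction}) of the Theorem in the instance $\cat A=\catalg F$, the inclusion $\cat A\hookrightarrow\catalg F$ then being the identity functor and the functor $\cat A\rightarrow\cat C$ being $U_F$ itself.

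It remains only to verify the closure hypothesis, \ie~that $\cat A=\catalg F$ is closed under the left-hom endofunctors~$\acthom{V,-}$ for every $V\in\cat V$. This is immediate from Definition~\ref{def:act-on-cat-alg}: there the left-hom endofunctor~$\acthom{V,-}$ is shown to lift to $\catalg F$, carrying each $F$-algebra~$(X,s)$ to the $F$-algebra~$\big(\acthom{V,X},\acthomalgstr V s\big)$. Since $\catalg F$ comprises \emph{all} $F$-algebras, every such image again lies in $\catalg F$, and so closure holds trivially.

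With both hypotheses met, part~(\ref{thm:general-free-strong-monad-2}) of the Theorem applies directly and yields that the monad on~$\lscat C$ induced by the free-algebra adjunction is strong. I expect no genuine obstacle here beyond recognising the specialization: the substantive work---the construction of the lifted strength~$\wh\st$ via the parametrized universal property of part~(\ref{thm:general-free-strong-monad-1}) and the verification of its naturality together with the four coherence conditions of a strength---has already been carried out in the proof of the Theorem.
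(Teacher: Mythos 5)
Your proposal is correct and matches the paper's (implicit) argument exactly: the corollary is the specialization of Theorem~\ref{thm:general-free-strong-monad} to $\cat A=\catalg F$, where the closure under the left-hom endofunctors $\acthom{V,-}$ is automatic by Definition~\ref{def:act-on-cat-alg}. Nothing further is needed.
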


\section{Monadic Equational Systems}
\label{sec:MES}

As in~\cite{FioreHur2008,Hur2010}, we introduce a general
abstract enriched notion of equational presentation.  This is here
referred to as Monadic Equational
System~(Definition~\ref{MonadicEquationalSystem}), with the terminology
chosen to indicate the central role played by the concept of monad, which
is to be regarded as encapsulating algebraic structure.  In this context,
equations are specified by pairs of Kleisli maps.

\begin{definition}
\label{Terms}
A \emph{Kleisli map} for an endofunctor~$T$ on a category~$\cat C$ of
arity~$A$ and coarity~$C$ is a morphism~$C\rightarrow TA$ in $\cat C$.  
\end{definition}

\begin{definition}\label{KleisliMapInterpretationDefinition}
For a strong endofunctor $(T,\st)$ on a right-closed \mbox{$\cat
V$-action}~$(\cat C,\monact)$, the \emph{interpretation} of a Kleisli
map~${t:C\rightarrow TA}$ in $\cat C$ with respect to a $T$-algebra~$(X,s)$ is
defined as
\begin{equation}\label{KleisliMapInterpretation}
\llrrbrk{t}_{(X,s)}
= 
\intmap(s)_A \comp (\enrich{\cat C}(A,X)\monact t)
  : \enrich{\cat C}(A,X)\monact C\rightarrow X
\end{equation}
where the \emph{interpretation map}
$\intmap(s)_A: \enrich{\cat C}(A,X)\monact TA\rightarrow X$ is that
defined in~(\ref{InterpretationMap}).
\end{definition}

Two basic properties of interpretation maps follow.
\begin{proposition}\label{SatisfactionTransferProposition}
Let $T$ be a strong endofunctor on a right-closed \mbox{$\cat
V$-action}~$(\cat C,\monact)$.  For $h:(X,s)\rightarrow(Y,t)$ in $\catalg
T$, $h\comp\intmap(s)_A = \intmap(t)_A \comp \big(\enrich{\cat
  C}(A,h)\monact TA\big)$.
\end{proposition}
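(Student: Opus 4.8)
The plan is to unfold the definition of the interpretation map $\intmap(s)_A$ from~(\ref{InterpretationMap}) on both sides of the claimed equation and reduce the identity to two facts: the naturality of the strength $\st$ and the defining compatibility of the algebra morphism $h$. Recall that $\intmap(s)_A$ is the composite
$$
\xymatrix@C=15pt{
\enrich{\cat C}(A,X)\monact TA
\ar[r]^-{\st_{\enrich{\cat C}(A,X),A}}
&
T(\enrich{\cat C}(A,X)\monact A)
\ar[r]^-{T(\eneval{A}{X})}
&
TX
\ar[r]^-s
&
X
}
$$
and similarly $\intmap(t)_A$ is the corresponding composite built from the right-hom $\enrich{\cat C}(A,Y)$, the evaluation $\eneval{A}{Y}$, and the algebra structure $t$.

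First I would postcompose $\intmap(s)_A$ with $h$ and, using that $h:(X,s)\to(Y,t)$ is a morphism in $\catalg T$, rewrite $h\comp s$ as $t\comp Th$. This replaces the final segment $\xymatrix@C=12pt{TX\ar[r]^-s & X\ar[r]^-h & Y}$ by $\xymatrix@C=12pt{TX\ar[r]^-{Th} & TY\ar[r]^-t & Y}$. Next I would use the naturality of the strength $\st$ in its second argument, applied to the map $h:X\to Y$, to commute $Th$ past $\st$ and $T(\eneval{A}{X})$. The point is that the composite $T(\eneval{A}{X})$ followed by $Th$ equals $T\big(h\comp\eneval{A}{X}\big)$, and I must reconcile this with $T(\eneval{A}{Y})$ precomposed with the appropriate transport along $\enrich{\cat C}(A,h)$.

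The key diagram to verify is therefore the naturality square relating the two evaluation maps, namely that
$$
h\comp\eneval{A}{X}
=
\eneval{A}{Y}\comp\big(\enrich{\cat C}(A,h)\monact A\big)
:
\enrich{\cat C}(A,X)\monact A\to Y
\enspace,
$$
which holds because $\eneval{A}{(-)}$ is the counit of the adjunction $(-)\monact A\dashv\enrich{\cat C}(A,-)$ and is hence natural in its superscript object. Combining this with the naturality of $\st$ in both variables—so that $\st_{\enrich{\cat C}(A,X),A}$ transports correctly along $\enrich{\cat C}(A,h)\monact A$—assembles the desired equality $h\comp\intmap(s)_A=\intmap(t)_A\comp\big(\enrich{\cat C}(A,h)\monact TA\big)$.

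I expect the main obstacle to be bookkeeping the naturality of $\st$ in the \emph{first} argument as the right-hom changes from $\enrich{\cat C}(A,X)$ to $\enrich{\cat C}(A,Y)$ along $\enrich{\cat C}(A,h)$, since the strength is natural in a variable that here runs through objects of $\cat V$ rather than $\cat C$; one must be careful that $\enrich{\cat C}(A,h)$ is a genuine $\cat V$-morphism and that the action functor $(-)\monact TA$ respects it. Everything else is a routine chase gluing the counit-naturality square to two instances of strength-naturality, and no coherence conditions beyond plain naturality are needed.
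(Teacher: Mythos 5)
Your proof is correct and is exactly the routine diagram chase that the paper leaves implicit: after unfolding~(\ref{InterpretationMap}), the identity follows from $h\comp s = t\comp Th$, functoriality of $T$, the counit naturality square $h\comp\eneval{A}{X}=\eneval{A}{Y}\comp\big(\enrich{\cat C}(A,h)\monact A\big)$, and naturality of $\st_{V,A}$ in its $\cat V$-variable along $\enrich{\cat C}(A,h)$. The only blemish is the passing appeal to ``naturality of $\st$ in its second argument applied to $h$'' --- the $\cat C$-argument stays fixed at $A$ throughout and no such instance is needed --- but you correct this yourself at the end, and the instance you do need (naturality in the first argument at the $\cat V$-morphism $\enrich{\cat C}(A,h)$) is unproblematic, so the argument stands.
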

\begin{proposition}\label{StMonMorInt}
Let $\tau:S\rightarrow T$ be a morphism between strong endofunctors on a
right-closed \mh{\cat V}action $(\cat C,\monact)$.  For every
\mh{$T$}algebra $(X,s)$, the interpretation map
${\intmap(s\comp\tau_X)_A}: \enrich{\cat C}(A,X)\monact SA\rightarrow X$
factors as the composite $\intmap(s)_A\comp(\enrich{\cat
C}(A,X)\monact\tau_A)$.  
\end{proposition}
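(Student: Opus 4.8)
The plan is to expand both interpretation maps according to their defining formula~(\ref{InterpretationMap}) and then reduce the required identity to the two defining properties of $\tau$: that it is a natural transformation and that it is a strong functor morphism. Writing $W = \enrich{\cat C}(A,X)$ for brevity, the left-hand interpretation map unfolds as
\[
\intmap(s\comp\tau_X)_A
= (s\comp\tau_X)\comp S(\eneval{A}{X})\comp \st^S_{W,A}
\enspace,
\]
while the right-hand side unfolds as
\[
\intmap(s)_A\comp(W\monact\tau_A)
= s\comp T(\eneval{A}{X})\comp \st^T_{W,A}\comp(W\monact\tau_A)
\enspace.
\]
As both composites carry a common postcomposition with $s$, it suffices to prove equality of the two remaining maps $W\monact SA\to X$.

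First I would rewrite the right-hand composite using the strong-functor-morphism coherence condition for $\tau$, instantiated at $V = W$ and $C = A$, which gives $\st^T_{W,A}\comp(W\monact\tau_A) = \tau_{W\monact A}\comp \st^S_{W,A}$. This turns the right-hand side into $s\comp T(\eneval{A}{X})\comp\tau_{W\monact A}\comp \st^S_{W,A}$. Next I would invoke the naturality of $\tau:S\to T$ at the morphism $\eneval{A}{X}:W\monact A\to X$, which yields $T(\eneval{A}{X})\comp\tau_{W\monact A} = \tau_X\comp S(\eneval{A}{X})$. Substituting this collapses the right-hand side to $s\comp\tau_X\comp S(\eneval{A}{X})\comp \st^S_{W,A}$, which is precisely the expansion of the left-hand side.

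Since the argument is a direct two-step rewriting using only the strength-coherence square and the naturality square for $\tau$, there is no genuine obstacle. The left-hand interpretation map $\intmap(s\comp\tau_X)_A$ is well-defined because $s\comp\tau_X:SX\to X$ is a morphism into $X$, to which~(\ref{InterpretationMap}) applies with $F = S$. Everything else is a formal manipulation of the composite defining~(\ref{InterpretationMap}).
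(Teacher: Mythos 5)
Your proof is correct: the paper states Proposition~\ref{StMonMorInt} without proof, and your two-step rewriting---the strong-functor-morphism square for $\tau$ at $(\enrich{\cat C}(A,X),A)$ followed by naturality of $\tau$ at $\eneval{A}{X}$---is exactly the routine verification the paper leaves implicit. Nothing is missing.
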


\begin{remark}\label{SemanticsTransformationRemark}
When considering a biclosed \mh{\cat V}action $\cat C$, the
interpretation maps 
$$
\intmap(s)_A
: \enrich{\cat C}(A,X)\monact TA\rightarrow X
$$ 
transpose to yield a \emph{semantics transformation} 
\begin{equation}\label{SemanticsTransformation}
\stmonmor(s): T \rightarrow \DDfun X
\end{equation}
as introduced in Theorem~\ref{CloneAdjunction} and also studied in
Theorem~\ref{SemanticsMapTheorem}.  
\end{remark}

The interpretation of Kleisli maps in algebras induces a
\emph{satisfaction
  relation}~(Definition~\ref{SatisfactionRelationDefinition}) between
algebras and equations.

\begin{definition}
For an endofunctor $T$, a parallel pair~$\eqnz{u}{v}:C\rightarrow T A$ of
Kleisli maps is referred to as a \emph{\mh{T}equation}.
\end{definition}

\begin{definition}\label{SatisfactionRelationDefinition}
Let $T$ be a strong endofunctor on a right-closed \mh{\cat V}action $(\cat
C,\monact)$.
For all \mh{T}{algebras} $(X,s)$ and \mh{T}equations~$\eqnz u v:
C\rightarrow TA$, \begin{equation*}\label{SatisfactionRelation}
(X,s)\models \eqnz u v: C\rightarrow TA
\enspace \text{iff}\, \enspace 
\llrrbrk u_{(X,s)} = \llrrbrk v_{(X,s)}
 : \enrich{\cat C}(A,X)\monact C\rightarrow X
\enspace.
\end{equation*}
\end{definition}
More generally, for a set of $T$-algebras $\cat{A}$, we set ${\cat{A} \models
\eqnz u v}$ iff ${(X,s)\models \eqnz u v}$ for all ${(X,s)\in \cat{A}}$.

\begin{corollary}\label{SubalgebraClosureCorollary}
Let $T$ be a strong endofunctor on a right-closed \mh{\cat V}action $\cat
C$.  For every $h:(X,s)\rightarrow(Y,t)$ in $\catalg T$ with
$h:X\rightarrow Y$ a monomorphism in $\cat C$, if 
${(Y,t)\models \eqnz u v}$
then $(X,s)\models \eqnz u v$.
\end{corollary}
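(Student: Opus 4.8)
The plan is to reduce the conclusion, via the fact that $h$ is a monomorphism, to an identity obtained by transporting the interpretations in $(X,s)$ forward along $h$ to interpretations in $(Y,t)$. By Definition~\ref{SatisfactionRelationDefinition}, the desired conclusion $(X,s)\models\eqnz uv$ amounts to the equation $\llrrbrk u_{(X,s)} = \llrrbrk v_{(X,s)}:\enrich{\cat C}(A,X)\monact C\to X$; and since $h:X\to Y$ is monic, it suffices to establish that $h\comp\llrrbrk u_{(X,s)} = h\comp\llrrbrk v_{(X,s)}$, for then $h$ may be cancelled on the left.

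The heart of the argument is a single transport identity: for \emph{every} Kleisli map $w:C\to TA$,
\begin{equation*}
h\comp\llrrbrk w_{(X,s)} = \llrrbrk w_{(Y,t)}\comp\big(\enrich{\cat C}(A,h)\monact C\big)
\enspace.
\end{equation*}
To prove it, I would first unfold the left-hand side by Definition~\ref{KleisliMapInterpretationDefinition} as $h\comp\intmap(s)_A\comp(\enrich{\cat C}(A,X)\monact w)$, and then rewrite $h\comp\intmap(s)_A$ using Proposition~\ref{SatisfactionTransferProposition} as $\intmap(t)_A\comp(\enrich{\cat C}(A,h)\monact TA)$. Bifunctoriality of the action $\monact$ then lets me commute $\enrich{\cat C}(A,h)$ past $w$, rewriting $(\enrich{\cat C}(A,h)\monact TA)\comp(\enrich{\cat C}(A,X)\monact w)$ as $(\enrich{\cat C}(A,Y)\monact w)\comp(\enrich{\cat C}(A,h)\monact C)$; recognising $\intmap(t)_A\comp(\enrich{\cat C}(A,Y)\monact w)$ as $\llrrbrk w_{(Y,t)}$ yields the identity.

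Applying this transport identity to both $u$ and $v$ and invoking the hypothesis $(Y,t)\models\eqnz uv$, which by Definition~\ref{SatisfactionRelationDefinition} reads $\llrrbrk u_{(Y,t)} = \llrrbrk v_{(Y,t)}$, gives
\begin{equation*}
h\comp\llrrbrk u_{(X,s)}
= \llrrbrk u_{(Y,t)}\comp(\enrich{\cat C}(A,h)\monact C)
= \llrrbrk v_{(Y,t)}\comp(\enrich{\cat C}(A,h)\monact C)
= h\comp\llrrbrk v_{(X,s)}
\enspace,
\end{equation*}
which is exactly the identity to which the proof was reduced; cancelling the monomorphism $h$ completes the argument.

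I do not expect a genuine obstacle. The only substantive ingredient is the transfer Proposition~\ref{SatisfactionTransferProposition}; the rest is the routine bifunctoriality bookkeeping and the elementary left-cancellation of $h$. The single point requiring care is that the transfer square is stated for the interpretation maps $\intmap(-)_A$ rather than for the full interpretations $\llrrbrk{-}_{(-)}$, so the computation must be routed through Definition~\ref{KleisliMapInterpretationDefinition} at both ends.
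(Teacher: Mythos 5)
Your proposal is correct and is precisely the argument the paper intends by its one-line proof ``By Proposition~\ref{SatisfactionTransferProposition}'': transport the interpretations along $h$ via that proposition plus bifunctoriality of $\monact$, use the hypothesis on $(Y,t)$, and cancel the monomorphism $h$. No differences of substance.
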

\proof
By Proposition~\ref{SatisfactionTransferProposition}.  
\endproof

\begin{corollary}\label{corollary:MES-closed-2}
Let $T$ be a strong functor on a biclosed \mh{\cat V}action.  For every
\mh{T}{algebra} $(X,s)$, 
\begin{center}
${(X,s)\models \eqnz u v}$
\enspace iff \enspace
${(\acthom{V,X},s_V) \models \eqnz u v}$
for all $V\in\cat V$
\enspace.
\end{center}
\end{corollary}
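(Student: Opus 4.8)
The plan is to recast the satisfaction relation in terms of the semantics transformation of Remark~\ref{SemanticsTransformationRemark} and then to exploit its functoriality under double-dualization change of base. As a preliminary, I would observe that for any Kleisli map~$t:C\rightarrow TA$ the interpretation $\llrrbrk{t}_{(X,s)}=\intmap(s)_A\comp(\enrich{\cat C}(A,X)\monact t)$ transposes, across the adjunction $\enrich{\cat C}(A,X)\monact(-)\adj\acthom{\enrich{\cat C}(A,X),-}$, to the composite $\stmonmor(s)_A\comp t:C\rightarrow\DDfun X(A)$; this is naturality of transposition together with the defining fact that $\stmonmor(s)_A$ is the transpose of $\intmap(s)_A$ (Theorem~\ref{CloneAdjunction}, Remark~\ref{SemanticsTransformationRemark}). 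Since transposition is a bijection, this yields the reformulation
$$
(X,s)\models\eqnz uv
\quad\text{iff}\quad
\stmonmor(s)_A\comp u=\stmonmor(s)_A\comp v
\enspace,
$$
and likewise with $(X,s)$ replaced by $(\acthom{V,X},s_V)$ throughout.

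For the forward implication I would recognize $(\acthom{V,X},s_V)$ as an instance of the change-of-base construction. By Example~\ref{ExponentialDDalgebras}(\ref{ExponentialDDalgebrasTwo}), $s_V$ is precisely the algebra $s_k$ of Corollary~\ref{DoubleDualizationChange} for the \mh{\DDfun X}algebra $k=\homact{\DDneg_V,X}$ on $\acthom{V,X}$. That corollary then supplies the factorization
$$
\stmonmor(s_V)=\stmonmor(\homact{\DDneg_V,X})\comp\stmonmor(s):T\rightarrow\DDfun{\acthom{V,X}}
\enspace.
$$
Assuming $(X,s)\models\eqnz uv$, so that $\stmonmor(s)_A\comp u=\stmonmor(s)_A\comp v$, I would take the $A$-component of this factorization and precompose with $u$ and with $v$; the assumed equality then immediately gives $\stmonmor(s_V)_A\comp u=\stmonmor(s_V)_A\comp v$, whence $(\acthom{V,X},s_V)\models\eqnz uv$ by the reformulation.

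The backward implication is immediate by specializing to $V=I$. The canonical isomorphism $X\iso\acthom{I,X}$ is a \mh{T}algebra isomorphism $(X,s)\iso(\acthom{I,X},s_I)$, as already used in Lemma~\ref{lemma:MES-closed-1}, and satisfaction is invariant under \mh{T}algebra isomorphisms by Proposition~\ref{SatisfactionTransferProposition} applied to this isomorphism and its inverse; hence $(\acthom{I,X},s_I)\models\eqnz uv$ forces $(X,s)\models\eqnz uv$. The only genuine content is thus the transposition reformulation and the identification of $s_V$ with $s_k$, so I expect the main obstacle to be merely ensuring that these bookkeeping steps align correctly—the substantive categorical work having already been discharged in Corollary~\ref{DoubleDualizationChange}.
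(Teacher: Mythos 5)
Your proposal is correct and follows essentially the same route as the paper: the forward direction rests on Corollary~\ref{DoubleDualizationChange} together with Example~\ref{ExponentialDDalgebras}, and the backward direction on specializing to $V=I$ and transferring satisfaction across the $T$-algebra isomorphism $(X,s)\iso(\acthom{I,X},s_I)$ (the paper packages the latter as Corollary~\ref{SubalgebraClosureCorollary}). The only difference is presentational: you phrase the forward step as the factorization $\stmonmor(s_V)=\stmonmor(\homact{\DDneg_V,X})\comp\stmonmor(s)$ of semantics transformations, whereas the paper writes out the transposed composite exhibiting $\llrrbrk t_{(\homact{V,X},s_V)}$ as factoring through $\llrrbrk t_{(X,s)}$.
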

\proof
($\Rightarrow$) Because, by 
Corollary~\ref{DoubleDualizationChange} and
Example~\ref{ExponentialDDalgebras}, one has that $\llrrbrk
t_{(\homact{V,X},s_V)}$ is the transpose of the composite
$$\xymatrix@C=15pt{
V\monact(\enrich{\cat C}(A,\homact{V,X})\monact C)
\ar[rr]^-{\alphaact^{-1}}
&&
(V\monten\enrich{\cat C}(A,\homact{V,X}))\monact C
\ar[rr]^-{\angeneval{V}{A}\monact C}
&&
\enrich{\cat C}(A,X)\monact C
\ar[rr]^-{\llrrbrk t_{(X,s)}}
&&
X
}$$
for all $t:C\rightarrow TA$.

($\Leftarrow$) By Corollary~\ref{SubalgebraClosureCorollary} using that
the canonical isomorphism~$X \iso \acthom{I,X}$ is a $T$-algebra
isomorphism~$(X,s)\iso(\acthom{I,X},s_I)$.
\endproof

\subsection*{Monadic Equational Systems}

The idea behind the definition of Monadic Equational System~(MES) is that
of providing a $\cat V$-enriched universe of discourse~$\cat C$ together
with algebraic structure~$\monad T$ for specifying equational
presentations~$E$.  

\begin{definition}
\label{MonadicEquationalSystem}
A \emph{Monadic Equational System} 
$
(\cat V,\cat C,\monad{T},E)
$ 
consists of 
\begin{itemize}
\item 
  a monoidal category~$\cat V = (\cat V,\monten,I,\alpha,\lambda,\rho)$,

\item 
  a biclosed \mh{\cat V}action~$\cat C = \big(\cat
  C,\monact,\alphaact,\lambdaact,\enrich{\cat C}(-,=),\acthom{-,=}\big)$,

\item 
  a strong monad~$\monad T = (T,\st,\eta,\mu)$ on 
    $\cat C$, and

\item a set of \mh{T}equations~$E$.
\end{itemize}
\end{definition}

\begin{remark}\label{SemanticsKleisliFunctorRemark}
Let $\monad T$ be a strong monad on a biclosed \mh{\cat V}action $(\cat
C,\monact)$.  For a \mh{\monad T}algebra $(X,s)$, by
Theorem~\ref{SemanticsMapTheorem}, the semantics
transformation~(\ref{SemanticsTransformation}) is a strong monad morphism
$$
\stmonmor(s): \monad T \rightarrow \DDmon X
$$
that, by Proposition~\ref{StMonMorToKleisli}, induces the
following situation
\begin{equation}\label{SemanticsKleisliFunctor}
\begin{array}{c}\xymatrix@C=60pt{
  \cat V \times \Kl{\cat C}{\monad T}
  \ar[r]^-{\cat V\times \Klfun{\stmonmor(s)}}
  \ar[d]_-{\Kl\monact{\monad T}}
  & 
  \cat V \times \Kl{\cat C}{\DDmon X}
  \ar[d]^-{\Kl\monact{\DDmon X}}
  \\ 
  \cat C_{\monad T}
  \ar[r]_-{\Klfun{\stmonmor(s)}}
  &
  \cat C_{\DDmon X}
}\end{array} 
\end{equation}
where the functorial action of 
${\Klfun{\stmonmor(s)}}
 : \Kl{\cat C}{\monad T}(C,A)\rightarrow\Kl{\cat C}{\DDmon X}(C,A)$ 
is the transpose of the interpretation function of Kleisli
maps~(\ref{KleisliMapInterpretation}).
\end{remark}

\begin{definition}
An \emph{\mh{\mes{S}} algebra} for a MES~$\mes{S} =(\cat V,\cat{C},\monad
T,E)$ is a $\monad T$-algebra~$(X,s)$ satisfying the equations in $E$,
\ie~such that $(X,s) \models \eqnz u v$ for all $(\eqnz u v)\in E$ or,
equivalently, such that $\stmonmor(s)$ coequalizes every parallel pair of
Kleisli maps in $E$.
\end{definition}

The full subcategory of~$\catAlg{\cat C}{\monad T}$ consisting of the
$\mes{S}$-algebras is denoted $\catalg{\mes{S}}$, and we write $U_{\mes
S}$ for the forgetful functor $\catalg{\mes{S}}\rightarrow\cat C$. 

\begin{examples}\label{MESsExamples}
\begin{enumerate}
\item
Every set of $T$-equations~$E$ for a monad~$\monad T$ on a category $\cat C$
with small coproducts and products yields a MES~$(\Set,\cat C,\monad T, E)$.
In particular, bounded infinitary algebraic presentations, see
\eg~\cite{Slominski1959,Wraith1975}, yield such MESs on complete and
cocomplete categories.

\item
An \emph{enriched algebraic theory}~\cite{KellyPower1993} consists of: a
locally finitely presentable category $\cat K$ enriched over a symmetric
monoidal closed category $\cat V$ that is locally finitely presentable as a
closed category together with a small set $\cat K_f$ representing the
isomorphism classes of the finitely presentable objects of $\cat K$; a
\mh{\cat K_f}indexed family of \mh{\cat K}objects $O = \setof{O_c}_{c\in \cat
  K_f}$; and a \mh{\cat K_f}indexed family of parallel pairs of \mh{\cat
  K_0}morphisms $\cat E = \setof{ \eqnz{u_c}{v_c}: E_c \rightarrow
T_O(c)}_{c\in\cat K_f}$ for $\monad T_O$ the free finitary monad on the
endofunctor $\coprod_{c\in\cat K_f} \cat K(c,-)\tensor O_c$ on $\cat K$.  

The structure $(\cat V,\cat K_0,\monad T_O, \cat E)$ yields a MES, an algebra
for which is a \mh{\monad T_O}algebra $(X,s)$ such that
$\stmonmor(s)_c:T_O(c)\rightarrow K_X(c)$ coequalizes $u_c$ and $v_c$ for all
$c\in\cat K_f$.  This coincides with the notion of algebra for the finitary
monad presented by $\cat E$ (by means of a coequaliser of a parallel pair
$\monad T_E\rightrightarrows\monad T_O$ induced by the parallel pairs in $\cat
E$) as discussed in~\cite[Section~5]{KellyPower1993}.

Nominal equational systems are MESs of this kind on the topos of nominal
sets (equivalently the Schanuel topos) that feature
in~\cite[Section~5]{FioreHur2011}.

\item
We exemplify how MESs may be used to provide presentations of algebraic
structure on symmetric operads.  For this purpose, we need consider the
category of symmetric sequences~$\Seq=\Set^\Bij$, for $\Bij$ the groupoid
of finite cardinals and bijections, together with its product and
coproduct structures and the following two monoidal structures:
\begin{itemize}
\item 
  Day's convolution symmetric monoidal closed
  structure~\cite{Day1970},\newline
  \cite{ImKelly1986}
  given by 
  $$\textstyle
  (X\tensor Y)(n) = \coend^{n_1,n_2\in\Bij} X(n_1)\times
  Y(n_2)\times\Bij(n_1+n_2,n)
  $$ 
  with unit $I=\Bij(0,-)$; and 

\item
  the substitution (or composition) monoidal
  structure~\cite{Kelly1972,Joyal1981,FioreGambinoHylandWinskel2008} given by 
  $$\textstyle
  (X\bullet Y)(n) = \coend^{k\in\Bij} X(k) \times Y^{\tensor k}(n)
  $$
  with unit $J=\Bij(1,-)$.
\end{itemize}%

We identify the category of symmetric operads $\Op$ with its well-known
description as the category of monoids for the substitution tensor
product, and proceed to consider algebraic structure on it.  In doing so,
one crucially needs to require that the algebraic and monoid structures
are compatible with each other, see~\cite{FiorePlotkinTuri1999,Fiore2008}.
For example, the consideration of symmetric operads with a cartesian
binary operation~$+$ and a linear binary operation~$*$ leads to defining
the category $\Op\mbox{\small$(+,*)$}$ with objects $A\in\Seq$ equipped
with 
\begin{itemize}
\item 
  a monoid structure $\nu: J\to A$, $\mu: A^{\bullet2}\to A$, and 
  
\item 
  an algebra structure $+: A^2\to A$, $*:A^{\tensor 2}\to A$
\end{itemize}%
that are compatible in the sense that the diagrams 
$$
\xymatrix@C=30pt{
  \ar[d]_-{+\,\bullet\,\id}
  A^{2} \bullet A \ar[rr]^-{\pair{\pi_1\bullet\id,\pi_2\bullet\id}} && 
  (A\bullet A)^{2} \ar[r]^-{\mu^{2}} & A^{2} \ar[d]^-{+}
  \\
  A\bullet A\ar[rrr]_-{\mu} && & A
}
\qquad
\xymatrix{
  \ar[d]_-{*\,\bullet\,\id}
  A^{\tensor 2} \bullet A \ar[r]^-\iso & (A\bullet A)^{\tensor 2}
  \ar[r]^-{\mu^{\tensor 2}} & A^{\tensor 2} \ar[d]^-{*}
  \\
  A\bullet A\ar[rr]_-{\mu} & & A
}
$$
commute.  (Morphisms are both monoid and algebra homomorphisms.)
Then, as follows from the general treatment given in~\cite{Fiore2008},
the forgetful functor $\Op\mbox{\small$(+,*)$}\rightarrow\Seq$ has a left
adjoint, for which the induced monad on $\Seq$ will be denoted $\monad M$.

Algebraic laws correspond to \mh{M}equations, and give rise to
MESs~$(\Set,\Seq,\monad M,E)$.
For example, the left-linearity law 
$$
(x_1+x_2)*x_3 \ = \ x_1*x_3+x_2*x_3
$$ 
corresponds to the \mh{M}equation 
$$\begin{array}{l}
\quad
J^{\tensor2}
\xymatrix@C=75pt
  {\ar[r]^-{\pair{\eta\,\inj1,\eta\,\inj2}\,\tensor\,\eta\,\inj3}&}
  \big(M(3\!\cdot\!J)\big)^2 \tensor M(3\!\cdot\!J) 
\xymatrix@C=35pt{\ar[r]^-{+\,\tensor\,\id}&}
\big(M(3\!\cdot\!J)\big)^{\tensor2}
\xymatrix@C=20pt{\ar[r]^-{*}&}
M(3\!\cdot\!J)
\\[1mm]
\equiv
\\[1mm]
\quad
J^{\tensor2} 
\xymatrix@C=90pt{\ar[r]^-{\pair{
      \eta\,\inj1\tensor\,\eta\,\inj3
      ,
      \eta\,\inj2\tensor\,\eta\,\inj3}}&}
\Big(\big(M(3\!\cdot\!J)\big)^{\tensor2}\Big)^2
\xymatrix@C=20pt{\ar[r]^-{*^2}&}
\big(M(3\!\cdot\!J)\big)^2
\xymatrix@C=20pt{\ar[r]^-{+}&}
M(3\!\cdot\!J)
\end{array}$$
while the additive pre-Lie law 
$$
(x_1*x_2)*x_3+x_1*(x_3*x_2)
\ = \ 
x_1*(x_2*x_3)+(x_1*x_3)*x_2 
$$ 
corresponds to the \mh{M}equation 
$$\begin{array}{l}
\!\!\!\!\!\!
  J^{\tensor3}
  \xymatrix@C=112.5pt{\ar[r]^-{\pair{
        \eta\inj1\tensor\eta\inj2\tensor\eta\inj3
        ,
        \eta\inj1\tensor\eta\inj3\tensor\eta\inj2
      }}&}
  \Big( \big( M(3\!\cdot\!J) \big)^{\tensor3} \Big)^2
  \xymatrix@C=80pt{\ar[r]^-{(*(*\tensor\id))\times(*(\id\tensor*))}&}
  \big( M(3\!\cdot\!J) \big)^2
  \xymatrix@C=15pt{\ar[r]^-+&}
  M(3\!\cdot\!J)
\\[1mm]
\!\!\!\!\!\!
\equiv
\\[1mm]
\!\!\!\!\!\!
  J^{\tensor3}
  \xymatrix@C=112.5pt{\ar[r]^-{\pair{
        \eta\inj1\tensor\eta\inj2\tensor\eta\inj3
        ,
        \eta\inj1\tensor\eta\inj3\tensor\eta\inj2
      }}&}
  \Big( \big( M(3\!\cdot\!J) \big)^{\tensor3} \Big)^2
  \xymatrix@C=80pt{\ar[r]^-{(*(\id\tensor*))\times(*(*\tensor\id))}&}
  \big( M(3\!\cdot\!J) \big)^2
  \xymatrix@C=15pt{\ar[r]^-+&}
  M(3\!\cdot\!J)
\end{array}
$$%
This can 
in fact 
be extended to a MES whose algebras are symmetric operads over vector
spaces equipped with a pre-Lie operation.

The MES framework allows however for greater generality, being able to further
incorporate linear algebraic theories with variable binding
operators~\cite{Tanaka2000} and/or with parameterised
metavariables~\cite{Hamana2004,Fiore2008}.  Details 
may 
appear elsewhere.  Here, as a simple application of the latter, we limit
ourselves to show that one can exhibit an equation 
\begin{center}
\hfill
$ 
\eqnz 
{ u_{m,n} }
{ v_{n,m} }
: J^{\tensor(m\cdot n)} \to T\big(J^{\tensor m}+J^{\tensor n}\big)
$
\hfill
$(m,n\in\Nat)$
\end{center}
for $\monad T$ the monad on $\Seq$ induced by the left adjoint to the
forgetful functor~${\Op\to\Seq}$, that is satisfied by a symmetric operad
iff every two operations respectively of arities $m$ and $n$ commute with
each other.  
Indeed, one lets
$$\begin{array}{rcl}
u_{m,n}
& = &
J^{\tensor(m\cdot n)}
\iso
J^{\tensor m}\bullet J^{\tensor n}
\xymatrix@C50pt{\ar[r]^-{(\eta\,\inj1)\bullet(\eta\,\inj2)}&}
\big( T(J^{\tensor m}+J^{\tensor n}) \big)^{\bullet2}
\xymatrix@C15pt{\ar[r]^-\mu&}
T(J^{\tensor m}+J^{\tensor n})
\end{array}$$
and
$$\begin{array}{rcl}
v_{n,m}
& = &
J^{\tensor(n\cdot m)}
\iso
J^{\tensor n}\bullet J^{\tensor m}
\xymatrix@C50pt{\ar[r]^-{(\eta\,\inj2)\bullet(\eta\,\inj1)}&}
\big( T(J^{\tensor m}+J^{\tensor n}) \big)^{\bullet2}
\xymatrix@C15pt{\ar[r]^-\mu&}
T(J^{\tensor m}+J^{\tensor n})
\end{array}$$
where, for $k,\ell\in\Nat$ and $X\in\Seq$, the isomorphism
$X^{\tensor(k\cdot \ell)}\iso J^{\tensor k}\bullet X^{\tensor\ell}$ is
given by the following composite of canonical isomorphisms:
$$
X^{\tensor(k\cdot\ell)}
\ \iso \
\big(X^{\tensor\ell}\big)^{\tensor k}
\ \iso \
\big(J\bullet X^{\tensor\ell}\big)^{\tensor k}
\ \iso \
J^{\tensor k}\bullet X^{\tensor \ell}
\enspace.
$$

\item
The companion papers~\cite{FioreHur2010} and~\cite{FioreMahmoud2010} consider
MESs for an extension of universal algebra from first to second order,~\ie~to
algebraic languages with variable binding and parameterised metavariables.
This work generalises the semantics of both (first-order) algebraic
theories and of (untyped and simply-typed) lambda calculi.  
\end{enumerate}
\end{examples}

\subsection*{Strong free algebras}

A MES $\mes S=(\cat V,\cat C,\monad T,E)$ is said to admit free algebras
whenever the forgetful functor $U_\mes{S}$ has a left adjoint, so that we have
the following situation:
$$\xymatrix@C=1.5pc@R=2.5pc{
  \catalg{\mes S}\:
  \ar@{^(->}[rr]
  \ar[rd]^-{U_\mes{S}}
  & &
  \cat C^{\monad T}
  \ar[ld]_-{U_{\monad T}}
  \\
  &
  \cat{C}
  \ar@<5pt>@/^10pt/[lu]^-{}
  \ar@<-5pt>@/_10pt/[ru]^-{}
  \ar@<6pt>@{}[lu]|(.5){\leftthreetimes}
  \ar@<-6pt>@{}[ru]|(.5){\rightthreetimes}
  &
}$$
We write $\monad T_{\mes S}$ for the induced \emph{free $\mes S$-algebra
monad} on $\cat C$.

\begin{theorem}\label{MES_StrongMonad}
For a MES $\mes S$ that admits free algebras, the free $\mes S$-algebra monad
is strong.  
\end{theorem}
\proof
By Lemma~\ref{lemma:MES-closed-1} and
Corollary~\ref{corollary:MES-closed-2}, applying
Theorem~\ref{thm:general-free-strong-monad} to the full subcategory
$\catalg{\mes S}$ of $\catalg{T}$ for $T$ the endofunctor underlying the
monad in $\mes S$.
\endproof

\section{Free constructions} 
\label{FreeAlgebrasForMESs}

To establish the wide applicability of Theorem~\ref{MES_StrongMonad}, we give
conditions under which MESs admit free algebras.  The results of this section
follow from the theory developed in~\cite{FioreHur2009}; proofs are
thereby omitted.

\begin{definition}
An object~$A$ of a right-closed $\cat V$-action~$\cat C$ is respectively said
to be \emph{$\kappa$-compact}, for $\kappa$ an infinite limit ordinal, and
\emph{projective} if the functor~$\enrich{\cat C}(A,-):\cat C\rightarrow\cat
V$ respectively preserves colimits of $\kappa$-chains and epimorphisms.
\end{definition}

\begin{definition}
A MES~$(\cat V,\cat C,\monad T, E)$ is called \emph{$\kappa$-finitary}, for
$\kappa$ an infinite limit ordinal, if the category~$\cat C$ is cocomplete,
the endofunctor~$T$ on $\cat C$ preserves colimits of $\kappa$-chains, and the
arity~$A$ of every \mh{T}equation~$\eqnz u v : C\rightarrow TA$ in $E$ is
$\kappa$-compact.  Such a MES is called $\kappa$-inductive if furthermore 
$T$ preserves epimorphisms and the arity~$A$ of every 
\mh{T}equation~$\eqnz u v : C\rightarrow TA$ in $E$ is projective.
\end{definition}

\begin{theorem}
For every \mh\kappa finitary MES~$\mes S=(\cat V,\cat C,\monad T, E)$, 
the embedding ${\catalg{\mes S}\ \rightembedding \cat C^{\monad T}}$ has a
left adjoint, 
the forgetful functor~$U_{\mes S}:\catalg{\mes S}\rightarrow\cat C $ is
monadic,
the category~$\catalg{\mes S}$ is cocomplete, and 
the underlying functor~$T_{\mes S}$ of the monad $\monad T_{\mes S}$
representing $\catalg{\mes S}$ preserves colimits of $\kappa$-chains. 
If, furthermore, $\mes S$ is $\kappa$-finitary then 
$T_{\mes S}$ preserves epimorphisms,
the universal homomorphism from $(TX,\mu_X)$ to its free $\mes S$-algebra
is epimorphic in $\lscat C$, and
free $\mes S$-algebras on $\monad T$-algebras can be constructed in
$\kappa$ steps. 
\end{theorem}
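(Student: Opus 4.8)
First I would recast satisfaction as a $\cat V$-enriched orthogonality condition on $\cat C^{\monad T}$. Writing $F_{\monad T}:\cat C\to\cat C^{\monad T}$, $X\mapsto(TX,\mu_X)$, for the free $\monad T$-algebra functor, each equation $e=(\eqnz{u_e}{v_e}:C_e\to TA_e)$ in $E$ determines a parallel pair of homomorphisms $\hat u_e,\hat v_e:F_{\monad T}C_e\rightrightarrows F_{\monad T}A_e$, namely the Kleisli extensions $\mu_{A_e}\comp Tu_e$ and $\mu_{A_e}\comp Tv_e$; let $q_e:F_{\monad T}A_e\twoheadrightarrow Q_e$ be their coequalizer in $\cat C^{\monad T}$. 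Using Theorem~\ref{SemanticsMapTheorem} together with the description of the semantics transformation $\stmonmor(s)$ in Remark~\ref{SemanticsKleisliFunctorRemark}, one checks that $(X,s)\models\eqnz{u_e}{v_e}$ exactly when $(X,s)$ is orthogonal to $q_e$ in the $\cat V$-enriched sense, so that $\catalg{\mes S}$ is precisely the full subcategory of $\cat C^{\monad T}$ enriched-orthogonal to $\setof{q_e}_{e\in E}$.

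Second I would construct the reflector $R:\cat C^{\monad T}\to\catalg{\mes S}$ by the transfinite small-object argument applied to $\setof{q_e}_{e\in E}$: starting from a $\monad T$-algebra, one iteratively forms the quotients imposing orthogonality to the $q_e$, running along the ordinals. The $\kappa$-finitary hypotheses force convergence. Indeed, since each arity $A_e$ is $\kappa$-compact, the right-hom $\enrich{\cat C}(A_e,-)$ preserves colimits of $\kappa$-chains, so every orthogonality defect at a limit stage already occurs below it; together with $\cat C$ cocomplete and $T$ preserving $\kappa$-chains---whence $\cat C^{\monad T}$ is cocomplete---the chain stabilises and its colimit is the free $\mes S$-algebra $R(X,s)$. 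This produces the left adjoint to the embedding. The reflector $R$ is moreover $\kappa$-accessible, so composing it with $F_{\monad T}$ yields the left adjoint to $U_{\mes S}$, hence a $\kappa$-accessible free $\mes S$-algebra monad $\monad T_{\mes S}$ whose underlying functor $T_{\mes S}$ preserves colimits of $\kappa$-chains.

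For the two remaining structural assertions I would note first that $\catalg{\mes S}$, being reflective in the cocomplete category $\cat C^{\monad T}$, is itself cocomplete, with colimits computed by applying $R$ to colimits formed in $\cat C^{\monad T}$. Monadicity of $U_{\mes S}$ then follows from Beck's theorem: it has the left adjoint just built; it reflects isomorphisms, because $U_{\monad T}$ does and the inclusion $\catalg{\mes S}\hookrightarrow\cat C^{\monad T}$ is full and faithful; and it creates coequalizers of $U_{\mes S}$-split pairs, since these are created by $U_{\monad T}$ and $\catalg{\mes S}$ is closed under them---satisfaction being preserved by absolute colimits. The comparison functor into the Eilenberg--Moore category of $\monad T_{\mes S}$ is accordingly an equivalence.

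Under the stronger $\kappa$-inductive hypotheses---$T$ preserving epimorphisms and each arity $A_e$ projective, so that $\enrich{\cat C}(A_e,-)$ also preserves epimorphisms---the construction can be performed entirely by regular-epimorphic quotients, each stage being epimorphic and preserved by $T$ and by the right-homs. A transfinite induction then shows that the reflection unit $(TX,\mu_X)\to T_{\mes S}X$ is epimorphic in $\cat C$, that $T_{\mes S}$ preserves epimorphisms, and---by the same $\kappa$-compactness estimate---that the free $\mes S$-algebra on a $\monad T$-algebra is reached after $\kappa$ quotient steps. I expect the main obstacle throughout to be securing convergence and exactness of this transfinite construction in the enriched, merely biclosed setting, where $\cat C$ need not be locally presentable; it is exactly the $\kappa$-compactness and projectivity of the arities---equivalently, the preservation properties of the right-homs $\enrich{\cat C}(A_e,-)$---that make the iteration terminate and cohere with the monad structure, and this is the ingredient imported from~\cite{FioreHur2009}.
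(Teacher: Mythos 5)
Your proposal is correct in outline, but it takes a genuinely different route from the paper, which in fact gives no proof here at all: it defers entirely to the theory of equational systems in~\cite{FioreHur2009}, whose strategy is the one displayed in the paper for the $\omega$-inductive case. That strategy stays at the level of the base category~$\cat C$ and of algebras for the \emph{endofunctor}~$T$: one coequalizes the interpretations $\llrrbrk{u}_{(TX,\mu_X)}, \llrrbrk{v}_{(TX,\mu_X)}:\enrich{\cat C}(A,TX)\monact C\rightarrow TX$ in $\cat C$, pushes out along $T(q_i)$ transfinitely, and takes the colimit, with $\kappa$-compactness of the arities securing convergence and projectivity securing the epimorphy claims. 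You instead pass to $\cat C^{\monad T}$, recast satisfaction as enriched orthogonality to the coequalizers $q_e$ of the Kleisli extensions $\hat u_e,\hat v_e$, and run an orthogonal-reflection/small-object argument. This buys a cleaner conceptual picture (reflective subcategory of an orthogonality class, cocompleteness and monadicity for free), and since the $q_e$ are epimorphisms the injectivity-versus-orthogonality issue you might worry about disappears; but it costs you two things the paper's route avoids. First, you must establish cocompleteness of $\cat C^{\monad T}$ \emph{before} you can even form the $q_e$, which is itself a transfinite construction of the same nature as the one you are trying to run. Second, the identification of satisfaction with \emph{enriched} orthogonality presupposes the enriched hom of Eilenberg--Moore algebras (an equalizer in $\cat V$) and that the $q_e$ are conical enriched colimits; the paper's framework assumes neither, and deliberately phrases satisfaction via the interpretations $\llrrbrk{-}_{(X,s)}$ in $\cat C$ precisely so that only biclosedness of the action is needed. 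One can repair this by replacing enriched orthogonality with ordinary orthogonality to the family $\setof{V\monact q_e}_{V\in\cat V}$, or by working with the counits $\eneval{A_e}{X}$ directly, but as stated the first step of your argument uses structure the theorem does not assume.

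Two smaller points. The inference ``$R$ is $\kappa$-accessible, hence $T_{\mes S}$ preserves colimits of $\kappa$-chains'' is slightly off: $R$, being a left adjoint, preserves \emph{all} colimits; what actually needs proving is that $U_{\mes S}$ preserves colimits of $\kappa$-chains, which follows from $\catalg{\mes S}$ being closed under such colimits in $\cat C^{\monad T}$ by $\kappa$-compactness of the arities $A_e$ (together with $(-)\monact C$ preserving them). And in the Beck argument, the closure of $\catalg{\mes S}$ under split quotients deserves a word: for $h:(X,s)\rightarrow(Y,t)$ split epi in $\cat C$, Proposition~\ref{SatisfactionTransferProposition} gives that $\llrrbrk{u}_{(Y,t)}$ and $\llrrbrk{v}_{(Y,t)}$ agree after precomposition with the (split, hence) epimorphism $\enrich{\cat C}(A,h)\monact C$, whence they agree. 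With these repairs your argument goes through and recovers the statement, including the $\kappa$-inductive half (note the theorem's second hypothesis should indeed read ``$\kappa$-inductive'', as you tacitly assumed).
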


\begin{remark}
The 
theorem above applies to all the examples
of
~\ref{MESsExamples}.  
\end{remark}

\subsection{}

In the case of $\omega$-inductive MESs, the free $\mes S$-algebra 
$(T_{\mes S}X,\tau^{\mes S}_X: TT_{\mes S}X\rightarrow T_\mes{S}X)$ on
$X\in\cat C$ is  constructed as follows:
\begin{equation*}
\label{eqn:qt-con}
\begin{minipage}{.9\textwidth}
\hspace*{-1.2pc}
\xymatrix@C=1.7pc{
\save[]+<-0.2pc,-0.5pc>
*{\mbox{\small${\forall\,(\eqnz u v:C \rightarrow TA) \in E}$}}
\restore
&
T(TX)
\ar[rd]^-{p_0}
\ar@{->>}[r]^-{T(q_0)}
\ar[d]_(.45){\mu_X} 
\ar@{}[rrd]|-{\textsf{po}}
& 
T(TX)_1 
\ar[dr]^-{p_1} 
\ar@{->>}[r]^{T(q_1)} 
\ar@{}[rrd]|-{\textsf{po}}
&
T(TX)_2
\ar@{->>}[r]^{T(q_2)}  
\ar[dr]^-{p_2} 
& 
T(TX)_3 
\ar@{}[r]|-{\cdots\cdots} 
& 
T(T_\mes{S}X) 
\ar@{-->}[d]^-{\tau^\mes{S}_X}  
\\
\save[]+<-2pc,0pc>
*+<6pt,6pt>{\enrich{\cat C}(A,TX)\monact C}
\ar@<3pt>[r]^-{\llrrbrk{u}_{(TX,\mu_X)}}
\ar@<-3pt>[r]_-{\llrrbrk{v}_{(TX,\mu_X)}}
\ar@{}@<10pt>[r]^(0)*{\vdots}
\ar@{}@<-2pt>[r]_(0)*{\vdots}
\restore
{\mbox{}\hspace*{6pc}\mbox{}}
&
TX
\ar@{->>}[r]^-{q_0} 
\ar@{}[r]_-{\s{coeq}} 
& 
(TX)_1 
\ar@{->>}[r]^-{q_1} 
& 
(TX)_2 
\ar@{->>}[r]^-{q_2} 
& 
(TX)_3 
\ar@{}[r]|-{\cdots\cdots} 
\ar@{}@<-7pt>[r]_(1){\textsf{colim}} 
& 
T_\mes{S}X 
}
\end{minipage}
\end{equation*}
where $q_0$ is the universal map that coequalizes every pair $\llrrbrk
u_{(TX,\mu_X)}$ and $\llrrbrk v_{(TX,\mu_X)}$ with  $(\eqnz u v)\in E$; the
parallelograms are pushouts; and $T_{\mes S}X$ is the colimit of the
$\omega$-chain of $q_i$.

Furthermore, when the strong monad $\monad T$ arises from free algebras for a
strong endofunctor $F$ which is \mh\omega cocontinuous and preserves
epimorphisms, the construction simplifies as follows:
\begin{equation*}
\label{eqn:qt-con-simp}
\begin{minipage}{.9\textwidth}
\hspace*{-1.2pc}
\xymatrix@C=1.7pc{
\save[]+<-0.2pc,-0.5pc>
*{\mbox{\small${\forall\,(\eqnz u v:C \rightarrow TA) \in E}$}}
\restore
&
F(TX)
\ar[rd]^-{p_0}
\ar@{->>}[r]^-{F(q_0)}
\ar[d]_(.45){\wh{\mu}_X} 
\ar@{}[rrd]|-{\textsf{po}}
& 
F(TX)_1 
\ar[dr]^-{p_1} 
\ar@{->>}[r]^{F(q_1)} 
\ar@{}[rrd]|-{\textsf{po}}
&
F(TX)_2
\ar@{->>}[r]^{F(q_2)}  
\ar[dr]^-{p_2} 
& 
F(TX)_3 
\ar@{}[r]|-{\cdots\cdots} 
& 
F(T_\mes{S}X) 
\ar@{-->}[d]^-{\wh{\tau}^\mes{S}_X}  
\\
\save[]+<-2pc,0pc>
*+<6pt,6pt>{\enrich{\cat C}(A,TX)\monact C}
\ar@<3pt>[r]^-{\llrrbrk{u}_{(TX,\mu_X)}}
\ar@<-3pt>[r]_-{\llrrbrk{v}_{(TX,\mu_X)}}
\ar@{}@<10pt>[r]^(0)*{\vdots}
\ar@{}@<-2pt>[r]_(0)*{\vdots}
\restore
{\mbox{}\hspace*{6pc}\mbox{}}
&
TX
\ar@{->>}[r]^-{q_0} 
\ar@{}[r]_-{\s{coeq}} 
& 
(TX)_1 
\ar@{->>}[r]^-{q_1} 
& 
(TX)_2 
\ar@{->>}[r]^-{q_2} 
& 
(TX)_3 
\ar@{}[r]|-{\cdots\cdots} 
\ar@{}@<-7pt>[r]_(1){\textsf{colim}} 
& 
T_\mes{S}X 
}
\end{minipage}
\end{equation*}
where $(TX,\wh{\mu}_X)$ and $(T_\mes{S}X,\wh{\tau}^\mes{S}_X)$ are the 
\mh{F}algebras respectively corresponding to the Eilenberg-Moore algebras
$(TX,\mu_X)$ and $(T_\mes{S}X,\tau^\mes{S}_X)$ for the monad $\monad T$.

\section{Equational Metalogic}
\label{EquationalMetaLogic}

The algebraic developments of the paper are put to use in a logical context.
Specifically, as in~\cite{FioreHur2008,Hur2010}, we introduce a
deductive system, here referred to as Equational Metalogic~(EML), for the
formal reasoning about equations in Monadic Equational Systems.  
The envisaged use of EML is to serve as a metalogical framework for the
synthesis of equational logics by instantiating concrete mathematical
models.  This is explained and exemplified in~\cite[Part~II]{FioreHur2011}
and~\cite{FioreHur2010}.

\subsection*{Equational Metalogic}
\label{EquationalMetalogicSubSection}

The \emph{Equational Metalogic} associated to a MES $
(\cat V,\cat C,\monad T,\Ax)$ 
consists of inference rules that inductively define the derivable
equational consequences
\[
\eqn{\Ax}{u}{v}:C\rightarrow TA
\enspace,
\]
for $u$ and $v$ Kleisli maps of arity~$A$ and coarity~$C$, that
follow from the equational presentation~$E$.  

EML has been synthesised from the model theory, in that each inference rule
reflects a model-theoretic property of equational satisfaction arising from
the algebraic structure of the semantic interpretation.
The inference rules of EML, besides those of equality and axioms, consist
of congruence rules for composition and monoidal action, and a rule for
the local character~(see~\eg~\cite[page~316]{MacLaneMoerdijk1992}) of
derivability.  Formally, these are as follows.
\begin{enumerate}
\item
Equality rules.\\
\[
\myproof{
\AxiomC{$\phantom{\Ax}$}
\LeftLabel{$\s{Ref}$}
\UnaryInfC{$\eqn \Ax u u : C\rightarrow TA$}
}
\qquad\qquad\qquad
\myproof{
\AxiomC{$\eqn \Ax u v : C\rightarrow TA$}
\LeftLabel{$\s{Sym}$}
\UnaryInfC{$\eqn \Ax v u : C\rightarrow TA$}
}
\]

\[
\myproof{
\AxiomC{
  $\eqn \Ax u v: C\rightarrow TA$
  \qquad 
  $\eqn \Ax v w: C\rightarrow TA$}
\LeftLabel{$\s{Trans}$}
\UnaryInfC{
  $\eqn \Ax u w: C\rightarrow TA$}
}
\]\\[-2.5mm]

\item
Axioms.\\
\[
\myproof{
\LeftLabel{$\s{Axiom}$}
\AxiomC{$(\eqnz u v: C\rightarrow TA)\in \Ax$}
\UnaryInfC{$
\eqn \Ax u v : C\rightarrow TA
$}
}
\]\\[-2.5mm]

\item
Congruence of composition.\\
\[
\myproof{
\AxiomC{$
\eqn{\Ax}{u_1}{v_1}:C \rightarrow TB
\qquad
\eqn{\Ax}{u_2}{v_2}:B \rightarrow TA
$}
\LeftLabel{$\s{Comp}$}
\UnaryInfC{$
\eqn{\Ax}{u_1\subst{u_2}\,}{\, v_1\subst{v_2}}:C\rightarrow TA$}
}
\]
where $w_1\subst{w_2}$ denotes the Kleisli composite
$\!\!\xymatrix@R=0pt@C=7.5pt{ 
  C\ar[rr]^-{w_1} && TB \ar[rrr]^-{T(w_2)} &&& T(TA) \ar[rr]^-{\mu_A} &&
  TA }$.\\

\item
Congruence of monoidal action.\\
\[
\myproof{
\AxiomC{$
\eqn{\Ax}{u}{v}: C\rightarrow TA
$}
\LeftLabel{$\s{Ext}$}
\RightLabel{$(V\in\cat{V})$}
\UnaryInfC{$
\eqn{\Ax}{\tensorext{V}{u} \,}{\, \tensorext{V}{v}}
  : V\monact C\rightarrow T(V\monact A)
$}
}
\]
where 
$\tensorext{V}{w}$ denotes the composite
$\!\!\xymatrix@R=0pt@C=30pt{
V\monact C \ar[r]^-{V\monact w} 
& 
V\monact TA \ar[r]^-{\st_{V,A}} 
& 
T(V\monact A)
}$.\\

\item
Local character.\\
\[
\myproof{
\AxiomC{$\eqn{\Ax}{ u\comp e_i \,}{\, v\comp e_i}
    : C_i\rightarrow TA \quad (i\in I)$}
\LeftLabel{$\s{Local}$}
\RightLabel{\big($\setof{e_i:C_i\rightarrow C}_{i\in I}$ jointly epi\big)}
\UnaryInfC{$\eqn \Ax u  v : C\rightarrow TA$}
}
\]\\
(Recall that a family of maps $\setof{e_i:C_i\rightarrow C}_{i\in I}$ is said
to be \emph{jointly epi} if, for any $f,g:C\rightarrow X$ such that
$\forall_{i\in I}\ 
 {f\comp e_i \, = \, g\comp e_i: C_i\rightarrow X}$,
it follows that $f\,=\,g$.)\\
\end{enumerate}

\begin{remark}
In 
the presence of coproducts and under the rule~$\s{Ref}$, the
rules~$\s{Comp}$ and~$\s{Local}$ are inter-derivable with the
rules\\[-1mm]
\[\myproof{
\AxiomC{$
\eqn{\Ax}{u}{v}: C\rightarrow T\big(\coprod_{i\in I}B_i\big)
\qquad
\eqn{\Ax}{u_i}{v_i}: B_i\rightarrow TA \enspace (i\in I)$}
\LeftLabel{$\s{Comp}_\amalg$}
\UnaryInfC{$\eqn\Ax{u\subst{[u_i]_{i\in I}}}{v\subst{[v_i]_{i\in I}}}
:C\rightarrow TA$}
}\]\\
and\\[-2mm]
\[\myproof{
\AxiomC{$\eqn{\Ax}{u\comp e}{v \comp e}: C'\rightarrow TA$}
\LeftLabel{$\s{Local}_1$}
\RightLabel{($e:C'\epirightarrow C$ epi)}
\UnaryInfC{$\eqn \Ax u  v : C\rightarrow TA$}
}\]\\[-6mm]
\end{remark}

\subsection*{Soundness}

The minimal requirement for a deductive system to be of interest is that
of \emph{soundness};~\ie~that derivability entails validity.

We show that that EML is sound for the model theory of MESs.

\begin{theorem}\label{EMLsoundness}
For a MES~$\mes S=(\cat V,\cat C,\monad T,\Ax)$,
\begin{center}
if $\eqn \Ax u v : C\rightarrow TA$ is derivable in EML then
$\catalg{\mes{S}}\models \eqnz u v : C\rightarrow TA$
\enspace.
\end{center}
\end{theorem}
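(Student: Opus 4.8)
The plan is to fix an arbitrary algebra $(X,s)\in\catalg{\mes S}$ and prove, by rule induction on the derivation of $\eqn\Ax u v:C\rightarrow TA$, that $(X,s)\models\eqnz uv$; since $(X,s)$ is arbitrary this yields $\catalg{\mes S}\models\eqnz uv$. The organising observation is that satisfaction can be read off the Kleisli category: by Theorem~\ref{SemanticsMapTheorem} the semantics transformation $\stmonmor(s):\monad T\rightarrow\DDmon X$ is a strong monad morphism, so by Proposition~\ref{StMonMorToKleisli} it induces a strong functor of \mh{\cat V}actions $\Klfun{\stmonmor(s)}:\Kl{\cat C}{\monad T}\rightarrow\Kl{\cat C}{\DDmon X}$ whose action on hom-sets is the transpose of the interpretation of Kleisli maps (Remark~\ref{SemanticsKleisliFunctorRemark}). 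Since transposition is a bijection, for every parallel pair $\eqnz uv:C\rightarrow TA$ one has
\[
(X,s)\models\eqnz uv
\quad\text{iff}\quad
\Klfun{\stmonmor(s)}(u)=\Klfun{\stmonmor(s)}(v)
\enspace.
\]
It thus suffices to check that the family of pairs identified by $\Klfun{\stmonmor(s)}$ is closed under each inference rule.

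For the equality rules $\s{Ref}$, $\s{Sym}$ and $\s{Trans}$ this is immediate, as equality of morphisms in $\Kl{\cat C}{\DDmon X}$ is an equivalence relation; for $\s{Axiom}$ it holds by the very definition of an $\mes S$-algebra, which satisfies every equation of $E$. The two congruence rules are exactly the structural properties of $\Klfun{\stmonmor(s)}$: for $\s{Comp}$, the Kleisli composite $u_1\subst{u_2}$ is the composition of $u_1$ and $u_2$ in $\Kl{\cat C}{\monad T}$, so functoriality gives $\Klfun{\stmonmor(s)}(u_1\subst{u_2})=\Klfun{\stmonmor(s)}(u_1)\comp\Klfun{\stmonmor(s)}(u_2)$ (Kleisli composition in $\Kl{\cat C}{\DDmon X}$), whence equal images of the premises force equal images of the conclusion; for $\s{Ext}$, the operation $\tensorext{V}{-}$ is precisely the action of $V$ in the Kleisli \mh{\cat V}action $\monact_{\monad T}$, and commutativity of the square~(\ref{SemanticsKleisliFunctor}) gives $\Klfun{\stmonmor(s)}(\tensorext Vu)=V\monact_{\DDmon X}\Klfun{\stmonmor(s)}(u)$, so again equal images are preserved.

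The rule $\s{Local}$ is the step that goes beyond pure functoriality and is where I expect the one genuine point to lie. Here it is cleanest to argue directly with interpretations. Since each $e_i$ is an ordinary map of $\cat C$, functoriality of $\enrich{\cat C}(A,X)\monact(-)$ gives
\[
\llrrbrk{u\comp e_i}_{(X,s)}
=
\llrrbrk u_{(X,s)}\comp\big(\enrich{\cat C}(A,X)\monact e_i\big)
\enspace,
\]
so the premises say that $\llrrbrk u_{(X,s)}$ and $\llrrbrk v_{(X,s)}$ agree after precomposition with every $\enrich{\cat C}(A,X)\monact e_i$. The point is that the family $\setof{\enrich{\cat C}(A,X)\monact e_i}_{i\in I}$ is again jointly epic: biclosedness of the action provides the left adjoint $\enrich{\cat C}(A,X)\monact(-)\adj\acthom{\enrich{\cat C}(A,X),-}$, and a left adjoint preserves jointly epic families (if $L\adj R$ and $f\comp Le_i=g\comp Le_i$ for all $i$, then transposing across the adjunction and using joint epicness of $\setof{e_i}$ gives $f=g$). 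Hence $\llrrbrk u_{(X,s)}=\llrrbrk v_{(X,s)}$, \ie~$(X,s)\models\eqnz uv$. The main obstacle is thus not a calculation but the recognition that $\s{Local}$ forces the \emph{left}-closedness of the action into play---the very ingredient that makes the definition of a MES require a biclosed, rather than merely right-closed, action---after which soundness follows entirely from the fact that $\Klfun{\stmonmor(s)}$ is a strong functor of \mh{\cat V}actions.
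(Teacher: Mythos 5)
Your proposal is correct and follows essentially the same route as the paper: a rule-by-rule soundness check in which $\s{Ref}$, $\s{Sym}$, $\s{Trans}$, $\s{Axiom}$ are immediate, $\s{Comp}$ is functoriality of $\Klfun{\stmonmor(s)}$, $\s{Ext}$ is the commutativity of the square~(\ref{SemanticsKleisliFunctor}), and $\s{Local}$ rests on the adjunction $\enrich{\cat C}(A,X)\monact(-)\dashv\acthom{\enrich{\cat C}(A,X),-}$. The only (immaterial) difference is that for $\s{Local}$ the paper transposes the interpretations to maps $C\rightarrow\acthom{\enrich{\cat C}(A,X),X}$ and applies joint epicness of $\setof{e_i}$ to these directly, whereas you keep them untransposed and instead transfer joint epicness across the adjunction --- the same argument read in the opposite direction.
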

\proof
One shows the soundness of each rule of EML;~\ie~that every 
\mh{\mes S}algebra satisfying the premises of an EML rule also satisfies its
conclusion.  

The soundness of the rules $\s{Ref}$, $\s{Sym}$, $\s{Trans}$, and
$\s{Axiom}$ is trivial.

For the rest of the proof, let $\ol f: Z\rightarrow\homact{V,Y}$ denote
the transpose of $f:V\monact Z\rightarrow Y$; so that $\ol{\llrrbrk
t_{(X,s)}} = \stmonmor(s)_A\comp t: C\rightarrow\homact{\enrich{\cat
C}(A,X),X}$ for all $t:C\rightarrow TA$. 

The soundness of the rule~$\s{Comp}$ is a consequence of the functoriality
of ${\Klfun{\stmonmor(s)}:\Kl{\cat C}{\monad T}\rightarrow\Kl{\cat
C}{\DDmon X}}$, see Remark~\ref{SemanticsKleisliFunctorRemark}, from which we
have that 
$$
\ol{\llrrbrk{w_1\subst{w_2}}_{(X,s)}}
= 
\ol{\llrrbrk{w_2}_{(X,s)}} 
\,\comp_{\Kl{\cat C}{\DDmon X}}
\ol{\llrrbrk{w_1}_{(X,s)}}
: C\rightarrow\homact{\enrich{\cat C}(A,X),X}
$$
for all $w_1:C\rightarrow TB$ and $w_2:B\rightarrow TA$ in $\cat C$.

The soundness of the rule~$\s{Ext}$ is a consequence of the commutativity
of~(\ref{SemanticsKleisliFunctor}), from which we have that 
$$
\ol{\llrrbrk{\tensorext V t}_{(X,s)}}
=
\DDst X_{V,A}
\comp
(V\monact\ol{\llrrbrk t_{(X,s)}})
: C\rightarrow\homact{\enrich{\cat C}(A,X),X}
$$
for all $t:C\rightarrow TA$ in $\cat C$.

Finally, the soundness of the rule~$\s{Local}$ is a consequence of the
fact that $\ol{\llrrbrk{t\comp e}_{(X,s)}} = \ol{\llrrbrk t_{(X,s)}}\comp
e$.  
\endproof

\section{Internal strong completeness}
\label{InternalCompletenessSection}

The completeness of EML, \emph{i.e.}~the converse to the soundness
theorem, 
cannot be established at the abstract level of generality that we are working
in.  We do however have an internal form of strong completeness for Monadic
Equational Systems admitting free algebras.  The main development of this
section is to state and prove this result.

The 
internal strong completeness theorem in conjunction with the construction
of free algebras provides a main mathematical tool for establishing the
completeness of concrete instantiations of EML,
see~\cite[Part~II]{FioreHur2011} and~\cite{FioreHur2010}.

\begin{notation}
For a MES $\mes S=(\cat V,\cat C,\monad T,E)$ admitting free algebras,
write $(T_{\mes S}X,\tau^{\mes S}_X: TT_{\mes S}X\rightarrow T_\mes{S}X)$
for the free \mh{\mes S}algebra on an object~$X\in\cat C$.  
\end{notation}
Then, the family~$\tau^\mes{S}=\setof{\tau^\mes{S}_X}_{X\in\cat C}$ yields
a natural transformation~$\tau^\mes{S}:TT_\mes{S}\rightarrow
T_\mes{S}$.  

\subsection*{Quotient maps}

Let $\mes S$ be a MES admitting free algebras.  The universal property of
free \mh{\monad T}algebras induces a family of
morphisms~$\qt^\mes{S}=\setof{\qt^\mes{S}_X:TX\rightarrow T_{\mes
S}X}_{X\in\cat C}$, referred to as the \emph{quotient maps} of $\mes S$,
defined as the unique homomorphic extensions
$(TX,\mu_X)\rightarrow(T_{\mes S}X,\tau^{\mes S}_X)$ of $\eta^{\mes S}_X$;
\ie~the unique maps such that the diagram 
\begin{equation}
\label{eqn:quotient-map}
\begin{minipage}{\textwidth}
\xymatrix@C=4pc{
  TTX
  \ar[r]^-{T(\qt^\mes{S}_X)}
  \ar[d]_-{\mu_X}
  &
  TT_{\mes S}X
  \ar[d]^-{\tau^{\mes S}_X}
  \\
  TX
  \ar[r]^-{\exists!\, \qt^\mes{S}_X}
  &
  T_{\mes S}X
  \\
  X
  \ar[u]^-{\eta_X}
  \ar[ru]_-{\eta^{\mes S}_X}
}
\end{minipage}
\end{equation}
commutes.  
As a general $2$-categorical fact, 
the family~$\setof{\qt^\mes{S}_X}_{X\in\cat C}$ yields a monad
morphism~$\qt^{\mes S}: \monad T\rightarrow\monad T_{\mes S}$.
By Theorem~\ref{MES_StrongMonad}, the free \mh{\mes S} algebra monad~$\monad
T_\mes{S}$ is strong, and we proceed to show that so is the monad
morphism~$\qt^\mes{S}$. 
\begin{theorem}\label{StrongFunctorMorphismTheorem}
For a MES~$\mes S = (\cat V,\cat C,\monad T,E)$, the monad
morphism~$\qt^{\mes S}:T\rightarrow T_{\mes S}$ is strong.
\end{theorem}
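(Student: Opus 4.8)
We must show that the monad morphism $\qt^{\mes S}:\monad T\to\monad T_{\mes S}$ is \emph{strong}, i.e.\ that it is a strong functor morphism between the strong endofunctors $(T,\st)$ and $(T_{\mes S},\wh\st)$, where $\wh\st$ is the lifted strength produced by Theorem~\ref{thm:general-free-strong-monad}. Concretely, the plan is to verify the single coherence square of a strong functor morphism, namely
\[
\wh\st_{V,X}\comp(V\monact\qt^{\mes S}_X)
\ =\
T_{\mes S}(V\monact\qt^{\mes S}_X)\ \text{(no—)}\quad
\qt^{\mes S}_{V\monact X}\comp\st_{V,X}
\ =\
\wh\st_{V,X}\comp(V\monact\qt^{\mes S}_X)
\;:\;V\monact TX\to T_{\mes S}(V\monact X).
\]

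**The approach.** The key observation is that every map into a free $\mes S$-algebra that we care about is pinned down by a \emph{universal property}: the extension maps of Theorem~\ref{thm:general-free-strong-monad}(\ref{thm:general-free-strong-monad-1}) are unique. So the plan is to exhibit both composites above as the unique extension $f^\#:V\monact TX\to T_{\mes S}(V\monact X)$ of a common map $f:V\monact X\to T_{\mes S}(V\monact X)$, and then invoke uniqueness to conclude they agree. This is precisely the proof strategy already used repeatedly inside the proof of Theorem~\ref{thm:general-free-strong-monad}(\ref{thm:general-free-strong-monad-2}) to establish naturality and the strength coherences, so it is the idiom the reader expects here. The natural candidate for $f$ is
\[
\xymatrix@C=20pt{
V\monact X \ar[r]^-{V\monact\eta_X} & V\monact TX \ar[r]^-{\st_{V,X}} & T(V\monact X) \ar[r]^-{\eta^{\mes S}_{V\monact X}\ \text{/}\ \qt^{\mes S}_{V\monact X}} & T_{\mes S}(V\monact X)
}
\]
(equivalently, using the defining triangle~(\ref{eqn:quotient-map}) of $\qt^{\mes S}$, the composite $\eta^{\mes S}_{V\monact X}\comp\lambda$-adjusted unit), and I would first compute $f$ down to the form $\qt^{\mes S}_{V\monact X}\comp\eta_{V\monact X}\comp\ldots$ so that it is manifestly independent of which composite we started from.

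**Carrying it out.** First I would show that $\qt^{\mes S}_{V\monact X}\comp\st_{V,X}$ is the extension of $f$: this uses that $\qt^{\mes S}$ is a monad morphism (established just before the theorem as a $2$-categorical fact) together with the third strength coherence of $\monad T$ and the defining square~(\ref{eqn:quotient-map}) of $\qt^{\mes S}_X$, so that precomposing with $V\monact\eta_X$ collapses the $\st$ past the unit via the strong-monad unit law $\st_{V,X}\comp(V\monact\eta_X)=\eta_{V\monact X}$ and then $\qt^{\mes S}$ sends $\eta$ to $\eta^{\mes S}$. Second I would show that $\wh\st_{V,X}\comp(V\monact\qt^{\mes S}_X)$ is the extension of the same $f$: here the bottom triangle of the defining diagram~(\ref{eqn:monad-str-gen}) for $\wh\st$, namely $\wh\st_{V,X}\comp(V\monact\eta_X)=\eta^{\mes S}_{V\monact X}$ together with the triangle~(\ref{eqn:quotient-map}) giving $\qt^{\mes S}_X\comp\eta_X=\eta^{\mes S}_X$, again produces $f$ after precomposition with $V\monact\eta_X$. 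The uniqueness clause of Theorem~\ref{thm:general-free-strong-monad}(\ref{thm:general-free-strong-monad-1}) then forces the two composites to coincide.

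**The main obstacle.** The delicate point is verifying that each candidate composite genuinely satisfies the \emph{algebra-homomorphism / inductive} half of the universal property in~(\ref{eqn:param-ind})—i.e.\ that it respects the $F$-algebra structures $\tau_X$ and $\tau^{\mes S}$—not merely the unit triangle. For $\qt^{\mes S}_{V\monact X}\comp\st_{V,X}$ this amounts to a diagram chase combining the multiplication coherence of the strength $\st$ (the strong-monad $\mu$-axiom), naturality of $\st$, and the homomorphism property of $\qt^{\mes S}$ encoded in the top square of~(\ref{eqn:quotient-map}); for $\wh\st_{V,X}\comp(V\monact\qt^{\mes S}_X)$ it is the top rectangle of~(\ref{eqn:monad-str-gen}) together with naturality of $\wh\st$ in its second variable paste against $\qt^{\mes S}$. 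Both are routine once the right squares are lined up, but keeping track of which strength ($\st$ versus $\wh\st$) and which $F$-algebra structure appears where is where care is needed; I expect this bookkeeping, rather than any conceptual difficulty, to be the crux. Throughout I would lean on the fact—made available by Theorem~\ref{MES_StrongMonad} via Theorem~\ref{thm:general-free-strong-monad}—that $\wh\st$ and $T_{\mes S}$ are defined \emph{by} these universal properties, so that no explicit formula for $\wh\st$ is ever needed.
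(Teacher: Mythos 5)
Your proposal matches the paper's proof: the paper likewise applies Theorem~\ref{thm:general-free-strong-monad}\,(\ref{thm:general-free-strong-monad-1}) (with $\cat A=\cat C^{\monad T}$, closed under left-homs by Lemma~\ref{lemma:MES-closed-1}) and shows that both composites $\qt^{\mes S}_{V\monact X}\comp\st_{V,X}$ and $\wh{\st}_{V,X}\comp(V\monact\qt^{\mes S}_X)$ are the unique extension of $\eta^{\mes S}_{V\monact X}$. The only detail worth making explicit is that the uniqueness is invoked for the Eilenberg--Moore category $\cat C^{\monad T}$ (so that the free algebra on $X$ is $(TX,\mu_X)$ and the target $(T_{\mes S}(V\monact X),\tau^{\mes S}_{V\monact X})$ lies in it), which your write-up uses implicitly.
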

\proof
The result follows from
Theorem~\ref{thm:general-free-strong-monad}\,(\ref{thm:general-free-strong-monad-1})
applied in the case $\cat A = \cat C^\monad{T}$ by virtue of
Lemma~\ref{lemma:MES-closed-1}, 
showing that the composites
$$
\xymatrix@C=4pc{
  V\monact TX
  \ar[r
]^-{\st_{V,X}}
  &
  T(V\monact X)
  \ar[r]^-{\qt^{\mes S}_{V\monact X}}
  &
  {T}_{\mes S}(V\monact X)
}
$$
and
$$
\xymatrix@C=4pc{
  V\monact TX
  \ar[r]^-{V\monact \qt^{\mes S}_X}
  &
  V\monact {T}_{\mes S}X
  \ar[r
]^-{{\st}^{\mes S}_{V,X}}
  &
  {T}_{\mes S}(V\monact X)
}
$$
are the unique extension of 
$\eta^{\mes S}_{V\monact X}:V\monact X\rightarrow T_{\mes S}(V\monact X)$.
\endproof

\begin{proposition}\label{QuotientAsInterpretation}
For a MES $\mes S$ admitting free algebras, the quotient maps 
$\qt^{\mes S}_X$ factor as 
$$\xymatrix@C=20pt{
  TX \ar[rr]^-{(\lambdaact_{TX})^{-1}} &&
  I\monact TX \ar[rr]^-{n_X\monact TX} &&
  \enrich{\cat C}(X,T_{\mes S}X) \monact TX 
  \ar[rrr]^-{\intmap(\tau^{\mes S}_X)} 
  &&& X
}$$
where $n_X$ is the transpose of 
$\xymatrix{I\monact X\ar[r]^-{\lambdaact_X} & X \ar[r]^-{\eta^{\mes S}_X}
  & T_{\mes S}X}$.  
\end{proposition}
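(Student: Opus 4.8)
The plan is to compute the displayed composite directly and observe that it collapses to $\tau^{\mes S}_X \comp T(\eta^{\mes S}_X)$, which is precisely the map $\qt^{\mes S}_X$ characterised by~(\ref{eqn:quotient-map}). The guiding remark is that, by its very definition, $n_X$ is the name (transpose) of $\eta^{\mes S}_X \comp \lambdaact_X$, so that the counit--name relation for the right-hom adjunction $(-)\monact X \adj \enrich{\cat C}(X,-)$ reads $\eneval{X}{T_{\mes S}X} \comp (n_X \monact X) \comp (\lambdaact_X)^{-1} = \eta^{\mes S}_X$.

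First I would unfold the interpretation map $\intmap(\tau^{\mes S}_X)_X$ by its definition~(\ref{InterpretationMap}), writing it as $\tau^{\mes S}_X \comp T(\eneval{X}{T_{\mes S}X}) \comp \st_{\enrich{\cat C}(X,T_{\mes S}X),X}$. Precomposing with $n_X \monact TX$ and using the naturality of the strength $\st$ in its monoidal argument, I would rewrite $\st_{\enrich{\cat C}(X,T_{\mes S}X),X} \comp (n_X \monact TX)$ as $T(n_X \monact X) \comp \st_{I,X}$. Then, precomposing with $(\lambdaact_{TX})^{-1}$ and invoking the unit coherence of the strength, $T(\lambdaact_X) \comp \st_{I,X} = \lambdaact_{TX}$, I would absorb $\st_{I,X} \comp (\lambdaact_{TX})^{-1}$ into $T\big((\lambdaact_X)^{-1}\big)$. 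At this stage the entire composite reads $\tau^{\mes S}_X \comp T\big(\eneval{X}{T_{\mes S}X} \comp (n_X \monact X) \comp (\lambdaact_X)^{-1}\big)$, and the bracketed map is exactly $\eta^{\mes S}_X$ by the name relation above, leaving $\tau^{\mes S}_X \comp T(\eta^{\mes S}_X)$.

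The concluding step is to identify $\tau^{\mes S}_X \comp T(\eta^{\mes S}_X)$ with $\qt^{\mes S}_X$: since $(TX,\mu_X)$ is the free $\monad T$-algebra on $X$ and $(T_{\mes S}X,\tau^{\mes S}_X)$ is a $\monad T$-algebra, this composite is the homomorphic extension of $\eta^{\mes S}_X$ and hence makes both regions of~(\ref{eqn:quotient-map}) commute; uniqueness of that extension then forces equality. I expect the only real care to be needed in the middle paragraph, keeping the naturality square and the unit-coherence triangle of $\st$ correctly oriented so that the two unitors cancel; once $n_X$ is recognised as a name, the remainder is just the standard free-algebra formula.
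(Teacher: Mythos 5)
Your proof is correct and follows essentially the same route as the paper: both identify $\qt^{\mes S}_X$ with $\tau^{\mes S}_X\comp T(\eta^{\mes S}_X)$ by uniqueness of homomorphic extensions, and then reduce the displayed composite to this map using exactly the naturality of the strength, its unit coherence, and the counit--name relation for $n_X$. The paper merely packages these three steps into a single commutative diagram where you spell them out equationally.
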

\proof
Noting that $\qt^{\mes S}_X$ factors as $\tau^{\mes S}_X \comp
T(\eta^{\mes S}_X)$, since this map is also an homomorphic extension
$(TX,\mu_X)\rightarrow(T_{\mes S}X,\tau^{\mes S}_X)$ of $\eta^{\mes S}_X$,
one calculates as follows
$$\xymatrix@C=70pt{
  &
  \ar[ld]_-{(\lambdaact_{TX})^{-1}}
  TX
  \ar[r]^-{\qt^\mes{S}_X}
  \ar[rd]_-{T(\eta^{\mes S}_X)}
  \ar[d]^-{T((\lambdaact_X)^{-1})}
  &
  T_\mes{S}X
  \\
  I\monact TX
  \ar[r]^-{\st_{I,X}}
  \ar[rd]_-{n_X\monact TX}
  &
  T(I\monact X)
  \ar[rd]^-{T(n_X\monact X)}
  &
  TT_\mes{S}X
  \ar[u]_-{\tau^\mes{S}_X}
  \\
  &
  \enrich{\cat C}(X,T_\mes{S}X)\monact TX
  \ar[r]_-{\st_{\enrich{\cat C}(X,T_\mes{S}X),X}}
  &
  T(\enrich{\cat C}(X,T_\mes{S}X)\monact X)
  \ar[u]_-{T(\eneval{X}{T_\mes{S}X})}
}
$$
\endproof

\begin{definition}
Let $\mes S$ be a MES admitting free algebras.  For an \mh{\mes S}algebra
${s:TX\rightarrow X}$, let $\wt s:T_\mes{S}X \rightarrow X$ be the unique
homomorphic extension $(T_{\mes S}X,\tau^{\mes S}_X)\rightarrow(X,s)$ of
the identity on $X$, so that 
$$
\xymatrix@C=4pc{
  TT_\mes{S}X
  \ar[r]^-{T(\wt s)}
  \ar[d]_-{\tau^\mes{S}_X}
  &
  TX
  \ar[d]^-{s}
  \\
  T_\mes{S}X
  \ar[r]^-{\exists!\, \wt s}
  &
  X
  \\
  X
  \ar[u]^-{\eta^{\mes S}_X}
  \ar[ru]_-{\id_X}
}
$$
\end{definition}

\begin{proposition}\label{QuotientFactoring}
For a MES $\mes S$ admitting free algebras, every \mh{\mes S}algebra
$s:TX\rightarrow X$ factors as the composite
$$\xymatrix{
TX \ar[r]^-{\qt^{\mes S}_X} & T_{\mes S}X \ar[r]^-{\wt s} & X
\enspace.
}$$
\end{proposition}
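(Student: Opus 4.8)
The plan is to exploit the universal property of the free $\monad T$-algebra $(TX,\mu_X)$ on $X$, whose generator-inserting map is $\eta_X:X\rightarrow TX$. Recall that for this free algebra, any $\monad T$-algebra homomorphism out of $(TX,\mu_X)$ is uniquely determined by its restriction along $\eta_X$; concretely, for every $\monad T$-algebra $(Y,t)$ and every map $g:X\rightarrow Y$ there is a unique $\monad T$-algebra homomorphism $(TX,\mu_X)\rightarrow(Y,t)$ whose composite with $\eta_X$ is $g$. I would apply this in the case $(Y,t)=(X,s)$ and show that both $s$ and $\wt s\comp\qt^{\mes S}_X$ arise as the homomorphism corresponding to $g=\id_X$.

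First I would observe that $s$ itself is such a homomorphism: it is the unique $\monad T$-algebra homomorphism $(TX,\mu_X)\rightarrow(X,s)$ extending $\id_X$, since the Eilenberg--Moore unit law gives $s\comp\eta_X=\id_X$ and the associativity law gives $s\comp\mu_X=s\comp T(s)$.

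Next I would check that $\wt s\comp\qt^{\mes S}_X$ is also a $\monad T$-algebra homomorphism $(TX,\mu_X)\rightarrow(X,s)$ extending $\id_X$. Here $\qt^{\mes S}_X$ is a $\monad T$-algebra homomorphism $(TX,\mu_X)\rightarrow(T_{\mes S}X,\tau^{\mes S}_X)$ — this is exactly the commuting upper square of diagram~(\ref{eqn:quotient-map}) — and $\wt s$ is a $\monad T$-algebra homomorphism $(T_{\mes S}X,\tau^{\mes S}_X)\rightarrow(X,s)$ by its very definition; since composites of homomorphisms are homomorphisms, so is $\wt s\comp\qt^{\mes S}_X$. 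Restricting along $\eta_X$ I would then compute $(\wt s\comp\qt^{\mes S}_X)\comp\eta_X=\wt s\comp(\qt^{\mes S}_X\comp\eta_X)=\wt s\comp\eta^{\mes S}_X=\id_X$, using the lower triangle of~(\ref{eqn:quotient-map}) and the defining triangle of $\wt s$.

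Finally, both $s$ and $\wt s\comp\qt^{\mes S}_X$ are $\monad T$-algebra homomorphisms $(TX,\mu_X)\rightarrow(X,s)$ whose composite with $\eta_X$ is $\id_X$, so by the uniqueness clause of the universal property they coincide, which is the claimed factorisation. I do not anticipate a genuine obstacle: the argument is a clean freeness/uniqueness argument, and the only point demanding care is the bookkeeping of the algebra structures in play — in particular recognising $(T_{\mes S}X,\tau^{\mes S}_X)$ as a $\monad T$-algebra, so that both $\qt^{\mes S}_X$ and $\wt s$ legitimately qualify as $\monad T$-algebra homomorphisms and their composite can be compared with $s$ through the universal property of $(TX,\mu_X)$.
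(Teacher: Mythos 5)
Your argument is correct and is exactly the paper's proof: the paper simply states that both $s$ and $\wt s\comp\qt^{\mes S}_X$ are the unique homomorphic extension $(TX,\mu_X)\rightarrow(X,s)$ of $\id_X$, and your write-up fills in precisely the verifications (that each is a homomorphism restricting to $\id_X$ along $\eta_X$) that this one-line justification leaves implicit.
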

\proof
As 
both morphisms are the unique homomorphic extension
$(TX,\mu_X)\rightarrow(X,s)$ of $\id_X$.
\endproof

\subsection*{Internal strong completeness}

The main result of the section~\cite{FioreHur2008,Hur2010} follows.

\begin{theorem}
\label{thm:int-comp}
For a MES~$\mes S = (\cat V,\cat C,\monad T,E)$ 
admitting free algebras,
the following are equivalent. 
\begin{enumerate}
\item\label{thm:int-comp-1}
$\catalg{\mes S} \models \eqnz u v:C\rightarrow TA$.

\item\label{thm:int-comp-2}
$(T_\mes{S} A,\tau^\mes{S}_A) \models \eqnz u v:C\rightarrow TA$.

\item\label{thm:int-comp-3}
$\qt^\mes{S}_A\comp u \;=\; \qt^\mes{S}_A\comp v:
C\rightarrow T_\mes{S} A$.
\end{enumerate}
\end{theorem}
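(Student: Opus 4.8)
The plan is to prove the cyclic chain of implications $(\ref{thm:int-comp-1})\Rightarrow(\ref{thm:int-comp-2})\Rightarrow(\ref{thm:int-comp-3})\Rightarrow(\ref{thm:int-comp-1})$. The first implication is immediate: the free \mh{\mes S}algebra $(T_{\mes S}A,\tau^{\mes S}_A)$ is itself an \mh{\mes S}algebra, hence an object of $\catalg{\mes S}$, so if every \mh{\mes S}algebra satisfies $\eqnz u v$ then in particular this one does.

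For $(\ref{thm:int-comp-2})\Rightarrow(\ref{thm:int-comp-3})$ I would realise both sides of $(\ref{thm:int-comp-3})$ as a common precomposite of the interpretations occurring in $(\ref{thm:int-comp-2})$. Concretely, the plan is to establish that for every Kleisli map $t:C\rightarrow TA$ one has
\[
\qt^{\mes S}_A\comp t
\;=\;
\llrrbrk{t}_{(T_{\mes S}A,\tau^{\mes S}_A)}\comp(n_A\monact C)\comp(\lambdaact_C)^{-1}
\enspace.
\]
This follows by unfolding $\llrrbrk{t}_{(T_{\mes S}A,\tau^{\mes S}_A)}=\intmap(\tau^{\mes S}_A)_A\comp(\enrich{\cat C}(A,T_{\mes S}A)\monact t)$, moving the generic element $n_A$ past $t$ by functoriality of $\monact$, using naturality of $\lambdaact$ to slide $t$ out, and recognising the residual composite $\intmap(\tau^{\mes S}_A)_A\comp(n_A\monact TA)\comp(\lambdaact_{TA})^{-1}$ as $\qt^{\mes S}_A$ via Proposition~\ref{QuotientAsInterpretation}. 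Granting this identity, the equality of interpretations asserted by $(\ref{thm:int-comp-2})$ yields $\qt^{\mes S}_A\comp u=\qt^{\mes S}_A\comp v$ on precomposing both sides by the same map.

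The substantive direction is $(\ref{thm:int-comp-3})\Rightarrow(\ref{thm:int-comp-1})$, which amounts to exhibiting the quotient map as a universal instrument for interpretation. I would take an arbitrary \mh{\mes S}algebra $(X,s)$, use Proposition~\ref{QuotientFactoring} to factor it as $s=\wt s\comp\qt^{\mes S}_X$, and invoke Theorem~\ref{StrongFunctorMorphismTheorem}, which makes $\qt^{\mes S}:T\rightarrow T_{\mes S}$ a strong monad morphism, in particular a morphism of strong endofunctors. Applying Proposition~\ref{StMonMorInt} to this morphism and the endofunctor algebra $(X,\wt s)$ gives $\intmap(s)_A=\intmap(\wt s)_A\comp(\enrich{\cat C}(A,X)\monact\qt^{\mes S}_A)$, whence, by functoriality of $\monact$,
\[
\llrrbrk{t}_{(X,s)}
\;=\;
\intmap(\wt s)_A\comp\big(\enrich{\cat C}(A,X)\monact(\qt^{\mes S}_A\comp t)\big)
\]
for every $t:C\rightarrow TA$. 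Thus the interpretation of $t$ in $(X,s)$ depends on $t$ only through $\qt^{\mes S}_A\comp t$, so $(\ref{thm:int-comp-3})$ forces $\llrrbrk u_{(X,s)}=\llrrbrk v_{(X,s)}$; as $(X,s)$ was arbitrary, $(\ref{thm:int-comp-1})$ follows. I expect the only genuine work to be the chain of naturality and functoriality rewrites behind the two displayed identities; a point worth flagging is that the argument uses the cited propositions purely at the level of endofunctor algebras and strong endofunctor morphisms, so that no monadicity of $U_{\mes S}$ is needed beyond the bare existence of free \mh{\mes S}algebras.
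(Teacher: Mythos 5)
Your proposal is correct and follows essentially the same route as the paper: the vacuous first implication, the identity $\qt^{\mes S}_A\comp t = \llrrbrk{t}_{(T_{\mes S}A,\tau^{\mes S}_A)}\comp(n_A\monact C)\comp(\lambdaact_C)^{-1}$ via Proposition~\ref{QuotientAsInterpretation} for the second, and the factorisation $\intmap(s)_A=\intmap(\wt s)_A\comp(\enrich{\cat C}(A,X)\monact\qt^{\mes S}_A)$ via Propositions~\ref{QuotientFactoring} and~\ref{StMonMorInt} for the third. Your explicit appeal to Theorem~\ref{StrongFunctorMorphismTheorem} to license the application of Proposition~\ref{StMonMorInt} is a small but welcome precision that the paper leaves implicit.
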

Here, the equivalence of the first two statements is an internal form of
so-called \emph{strong completeness}, stating that an equation is satisfied by
all models if and only if it is satisfied by a freely generated one.  
\proof
(\ref{thm:int-comp-1}) $\Rightarrow$ (\ref{thm:int-comp-2}).  
Holds vacuously.

(\ref{thm:int-comp-2}) $\Rightarrow$ (\ref{thm:int-comp-3}).
Because 
$\qt^{\mes S}_A\comp t
=
\llrrbrk t_{(T_{\mes S}A,\tau^{\mes S}_A)}
\comp
(n_A\monact C)
\comp
(\lambdaact_C)^{-1}$
for all $t:C\rightarrow TA$, 
as follows from Proposition~\ref{QuotientAsInterpretation}.

(\ref{thm:int-comp-3}) $\Rightarrow$ (\ref{thm:int-comp-1}). 
Because 
$\llrrbrk t_{(X,s)}
=
\llrrbrk{\qt^{\mes S}_A\comp t}_{(X,\wt s)}$
for all $t:C\rightarrow TA$, 
as follows from the identity
$$\begin{array}{rcll}
\intmap(s)_A 
  & = & \intmap(\wt s \comp \qt^{\mes S}_X)_A
  & \mbox{, by Proposition~\ref{QuotientFactoring}}
\\[2mm]
  & = & 
  \intmap(\wt s)_A \comp (\enrich{\cat C}(A,X)\monact\qt^{\mes S}_A)
  & \mbox{, by Proposition~\ref{StMonMorInt}}
\end{array}$$
\endproof

%

\refs


\bibitem[Ad\'amek, H\'ebert and Sousa~(2007)]{AdamekHebertSousa2007}
J.\,Ad\'amek, M.\,H\'ebert and L.\,Sousa.
\newblock A logic of injectivity. 
\newblock \emph{Journal of Homotopy and Related Topics}, 2:13--47, 2007.

\bibitem[Ad\'amek and Rosick\'y~(1994)]{AdamekRosicky1994}
J.\,Ad\'amek and J.\,Rosick\'y.
\newblock \emph{Locally presentable and accessible categories}.
\newblock Cambridge University Press, 1994.

\bibitem[Ad\'amek, Rosick\'y and Vitale~(2010)]{AdamekRosickyVitale2010}
J.\,Ad\'amek, J.\,Rosick\'y and E.\,Vitale.
\newblock \emph{Algebraic theories: A categorical introduction to general
  algebra}.
\newblock Cambridge University Press, 2010.

\bibitem[Ad\'amek, Sobral and Sousa~(2009)]{AdamekSobralSousa2009}
J.\,Ad\'amek, M.\,Sobral and L.\,Sousa.
\newblock A logic of implications in algebra and coalgebra.
\newblock \emph{Algebra Universalis}, 61:313--337, 2009.

\bibitem[Birkhoff~(1935)]{Birkhoff1935}
G.\,Birkhoff. 
\newblock On the structure of abstract algebras. 
\newblock \emph{Mathematical Proceedings of the Cambridge Philosophical
  Society}, 31(4):433--454, 1935.

\bibitem[Borceux and Day~(1980)]{BorceuxDay1980}
F.\,Borceux and B.\,Day.
\newblock Universal algebra in a closed category.
\newblock \emph{Journal of Pure and Applied Algebra}, 16:133--147, 1980.

\bibitem[Burroni~(1971)]{Burroni1971}
A.\,Burroni. 
\newblock $T$-cat\'egories (Cat\'egories dans un triple).
\newblock \emph{Cahiers de Topologie et G\'eom\'etrie Diff\'erentielle}, 
  XII(3):245--321, 1971.

\bibitem[Cohn~(1965)]{Cohn1965}
P.\,Cohn.
\newblock \emph{Universal algebra}.
\newblock Harper \& Row, 1965.

\bibitem[Day~(1970)]{Day1970}
B.\,Day.
\newblock On closed categories of functors.
\newblock \emph{Reports of the Midwest Category Seminar IV}, volume 137 of
  Lecture Notes in Mathematics, pages~1--38.
\newblock Springer-Verlag, 1970.

\bibitem[Ehresmann~(1968)]{Ehresmann1968}
C.\,Ehresmann. 
\newblock Esquisses et types des structures alg\'ebriques. 
\newblock \emph{Bul.\ Inst.\ Polit.\ Iasi}, XIV, 1968.

\bibitem[Fiore, Plotkin and Turi~(1999)]{FiorePlotkinTuri1999}
M.\,Fiore, G.\,Plotkin and D.\,Turi.
\newblock Abstract syntax and variable binding.
\newblock In \emph{Proceedings of the 14th Annual IEEE Symposium on Logic in
  Computer Science~(LICS'99)}, pages~193--202.
\newblock IEEE, Computer Society Press, 1999.

\bibitem[Fiore~(2008)]{Fiore2008}
M.\,Fiore. 
\newblock Second-order and dependently-sorted abstract syntax.
\newblock In \emph{Logic in Computer Science Conf.}~(LICS'08),
  pages~57--68.
\newblock IEEE, Computer Society Press, 2008.

\bibitem[Fiore, Gambino, Hyland, and Winskel~(2008)]
  {FioreGambinoHylandWinskel2008}
M.\,Fiore, N.\,Gambino, M.\,Hyland, and G.\,Winskel.   
\newblock The cartesian closed bicategory of generalised species of
  structures.  
\newblock \emph{J.\ London Math.\ Soc.}, 77:203-220, 2008. 

\bibitem[Fiore and Hur~(2008)]{FioreHur2008}
M.\,Fiore and C.-K.\,Hur. 
\newblock Term equational systems and logics. 
\newblock In \emph{Proceedings of the Twenty-Fourth Conference on the
Mathematical Foundations of Programming Semantics}~(MFPS'08), volume 218
of Electronic Notes in Theoretical Computer Science, pages~171--192.  
\newblock Elsevier, 2008.

\bibitem[Fiore and Hur~(2009)]{FioreHur2009}
M.\,Fiore and C.-K.\,Hur.
\newblock On the construction of free algebras for equational systems.
\newblock \emph{Theoretical Computer Science}, 410(18):1704--1729, 2009.

\bibitem[Fiore and Hur~(2010)]{FioreHur2010}
M.\,Fiore and C.-K.\,Hur. 
\newblock Second-order equational logic. 
\newblock In \emph{Proceedings of the 19th EACSL Annual Conference on
Computer Science Logic}~(CSL 2010), volume 6247 of Lecture Notes in
Computer Science, pages~320--335. 
\newblock Springer-Verlag, 2010.

\bibitem[Fiore and Hur~(2011)]{FioreHur2011}
M.\,Fiore and C.-K.\,Hur. 
\newblock On the mathematical synthesis of equational logics.
\newblock \emph{Logical Methods in Computer Science}, 7(3:12), 2011.

\bibitem[Fiore and Mahmoud~(2010)]{FioreMahmoud2010}
M.\,Fiore and O.\,Mahmoud. 
\newblock Second-order algebraic theories. 
\newblock In \emph{Proceedings of the 35th International Symposium on
Mathematical Foundations of Computer Science}~(MFCS 2010), volume 6281 of
Lecture Notes in Computer Science, pages~368--380. 
\newblock Springer-Verlag, 2010.

\bibitem[Gabbay and Pitts~(2001)]{GabbayPitts2001}
M.\,J.\,Gabbay and A.\,Pitts.
\newblock A new approach to abstract syntax with variable binding.
\newblock \emph{Formal Aspects of Computing}, 13:341--363, 2001.


\bibitem[Hamana~(2004)]{Hamana2004}
M.\,Hamana.
\newblock Free {$\Sigma$}-monoids: A higher-order syntax with metavariables.
\newblock In \emph{Second ASIAN Symposium on Programming Languages and Systems
  (APLAS~2004)}, volume 3302 of Lecture Notes in Computer Science,
  pages~348--363, 2004.

\bibitem[Hur~(2010)]{Hur2010}
C.-K.\,Hur. 
\newblock \emph{Categorical equational systems: Algebraic models and equational
  reasoning}.
\newblock PhD~thesis, Computer Laboratory, University of Cambridge, 2010.

\bibitem[Hyland and Power~(2007)]{HylandPower2007}
M.\,Hyland and A.\,J.\,Power.
\newblock The category theoretic understanding of universal algebra:
  Lawvere theories and monads.
\newblock \emph{Electronic Notes in Theoretical Computer Science},
  172:437--458, 2007.

\bibitem[Im and Kelly~(1986)]{ImKelly1986}
G.\,Im and G.\,M.\,Kelly.
\newblock A universal property of the convolution monoidal structure.
\newblock \emph{Journal of Pure and Applied Algebra}, 43:75--88, 1986.

\bibitem[Janelidze and Kelly~(2001)]{JanelidzeKelly01}
G.\,Janelidze and G.\,M.\,Kelly.
\newblock A note on actions of a monoidal category.
\newblock \emph{Theory and Applications of Categories}, 9(4):61--91, 2001.
 
\bibitem[Joyal~(1981)]{Joyal1981}
A.\,Joyal.
\newblock Une theorie combinatoire des s\'eries formelles.
\newblock \emph{Advances in Mathematics}, 42:1--82, 1981.

\bibitem[Kelly~(1972)]{Kelly1972}
G.\,M.\,Kelly.
\newblock On the operads of J.\,P.\,May.
\newblock Unpublished manuscript, 1972.
\newblock (Reprints in \emph{Theory and Applications of Categories}, No.~13,
pages~1--13, 2005.)

\bibitem[Kelly and Power~(1993)]{KellyPower1993}
G.\,M.\,Kelly and A.\,J.\,Power. 
\newblock Adjunctions whose counits are coequalizers, and presentations of
  finitary enriched monads. 
\newblock \emph{Journal of Pure and Applied Algebra}, 89:163--179, 1993.

\bibitem[Kock~(1970a)]{Kock1970a}
A.\,Kock.
\newblock On double dualization monads.
\newblock \emph{Math.\ Scand.}, 27:151--165, 1970.

\bibitem[Kock~(1970b)]{Kock1970b}
A.\,Kock.
\newblock Monads on symmetric monoidal closed categories.
\newblock \emph{Archiv der Mathematik}, XXI:1--10, 1970.

\bibitem[Kock~(1972)]{Kock1972}
A.\,Kock.
\newblock Strong functors and monoidal monads.
\newblock \emph{Archiv der Math.}, XXIII:113--120, 1972.

\bibitem[Kock~(2012)]{Kock2012}
A.\,Kock.
\newblock Commutative monads as a theory of distributions.
\newblock \emph{Theory and Applications of Categories}, 26(4):97--131, 2012.

\bibitem[Lack and Power~(2009)]{LackPower2009}
S.\,Lack and A.\,J.\,Power.
\newblock Gabriel-Ulmer duality and Lawvere theories enriched over a
  general base.
\newblock \emph{J.\ Funct.\ Programming}, 19(3--4):265--286, 2009.

\bibitem[Lack and Rosick\'y~(2011)]{LackRosicky2011}
S.\,Lack and J.\,Rosick\'y.
\newblock Notions of Lawvere theory.
\newblock \emph{Appl.\ Categor.\ Struct.}, 19:363--391, 2011.

\bibitem[Lawvere~(1963)]{Lawvere1963}
F.\,W.\,Lawvere. 
\newblock \emph{Functorial semantics of algebraic theories}. 
\newblock PhD thesis, Columbia University, 1963.
\newblock (Republished in \emph{Reprints in Theory and Applications of
  Categories}, 5:1--121, 2004.)

\bibitem[Linton~(1966)]{Linton1966}
F.\,Linton. 
\newblock Some aspects of equational theories. 
\newblock \emph{Proc.\ Conf.\ on Categorical Algebra at La Jolla},
  pages~84--95, 1966.

\bibitem[Mac~Lane~(1965)]{MacLane1965}
S.\,Mac Lane.
\newblock Categorical algebra.
\newblock \emph{Bulletin of the American Mathematical Society} 71(1):40--106,
  1965.

\bibitem[Mac~Lane~(1997)]{MacLaneCWM}
S.\,Mac Lane.
\newblock \emph{Categories for the working mathematician} (second edition).
\newblock Springer-Verlag, 1997.

\bibitem[Mac~Lane and Moerdijk~(1992)]{MacLaneMoerdijk1992}
S.\,Mac Lane and I.\,Moerdijk. 
\newblock \emph{Sheaves in geometry and logic}. 
\newblock Springer-Verlag, 1992.  

\bibitem[MacDonald and Sobral~(2004)]{MacDonaldSobral2004}
J.\,MacDonald and M.\,Sobral.
\newblock Aspects of monads.
\newblock Chapter~5 of \emph{Categorical Foundations: Special Topics in
  Order, Topology, Algebra, and Sheaf Theory}. 
\newblock Cambridge University Press, 2004.

\bibitem[Pareigis~(1977)]{Pareigis1977}
B.\,Pareigis.
\newblock Non-additive ring and module theory II.  \mh{\cat C}categories,
\mh{\cat C}functors and \mh{\cat C}morphisms.
\newblock \emph{Publicationes Mathematicae}, 25:351--361, 1977. 

\bibitem[Pedicchio and Rovatti~(2004)]{PedicchioRovatti2004}
M.\,C.\,Pedicchio and F.\,Rovatti.
\newblock Algebraic categories.
\newblock Chapter~6 of \emph{Categorical Foundations: Special Topics in
  Order, Topology, Algebra, and Sheaf Theory}.
\newblock Cambridge University Press, 2004.

\bibitem[Power~(1999)]{Power1999}
A.\,J.\,Power.
\newblock Enriched Lawvere theories. 
\newblock \emph{Theory and Applications of Categories}, 6:83--93, 1999.

\bibitem[Robinson~(2002)]{Robinson2002}
E.\,Robinson. 
\newblock Variations on algebra: Monadicity and generalisations of equational
  theories.  
\newblock \emph{Formal Aspects of Computing}, 13(3--5):308--326, 2002.

\bibitem[Ro\c{s}u~(2001)]{Rosu2001}
G.\,Ro\c{s}u.
\newblock Complete categorical equational deduction.
\newblock In \emph{Computer Science Logic}, volume 2142 of Lecture Notes
  in Computer Science, pages~528--538.  
\newblock Springer-Verlag, 2001.

\bibitem[S{\l}ominski~(1959)]{Slominski1959}
J.\,S{\l}ominski.
\newblock The theory of abstract algebras with infinitary operations.
\newblock \emph{Rozprawy Mat.}~18, 1959.

\bibitem[Tanaka~(2000)]{Tanaka2000}
M.\,Tanaka.
\newblock Abstract syntax and variable binding for linear binders.
\newblock In \emph{Proceedings of the 25th International Symposium on
  Mathematical Foundations of Computer Science~(MFCS'00)}, volume 1893 of
  Lecture Notes in Computer Science, pages~670--679.  
\newblock Springer-Verlag, 2000.

\bibitem[Wraith~(1975)]{Wraith1975}
G.\,Wraith.
\newblock Algebraic theories. 
\newblock Lecture Notes Series No.~22. 
\newblock Matematisk Institut, Aarhus Universitet, 1975.

\endrefs

\end{document}